\documentclass{article}
\usepackage{amssymb,graphicx}
\usepackage[utf8]{inputenc}
\usepackage{enumerate}
\usepackage{color}
\usepackage{amsmath,amsthm}
\usepackage{mathrsfs}
\usepackage{enumitem}
\RequirePackage[left=1in,right=1in,top=1in,bottom=1in]{geometry}

%\usepackage{fancyhdr}
%\pagestyle{fancyplain}
%\rhead{\textcolor{red}{DRAFT-NOT FOR CIRCULATION}}
%\lhead{}
%\renewcommand{\headrulewidth}{0pt}
%\renewcommand{\footrulewidth}{0pt}
\numberwithin{equation}{section}

\theoremstyle{plain}
\newtheorem{thm}{Theorem}[section]

\newtheorem{hypoth}{Hypothesis}[section]
\newtheorem{lem}{Lemma}[section]
\newtheorem{prop}{Proposition}[section]

\newtheorem{defs}{Definition}[section]
\theoremstyle{definition}
\newtheorem{rmk}{Remark}[section]

\newcommand{\tr}{\mathbb{T}^d}
\newcommand{\rd}{\mathbb{R}^d}

\begin{document}

\title{Lipschitz continuity of solutions to drift-diffusion equations in the presence of nonlocal terms}
\date{\today}
\author{Hussain Ibdah\footnote{Department of Mathematics, Texas A\&M University, College Station, TX 77843, USA (hibdah@tamu.edu)}}
{\let\newpage\relax\maketitle}
\begin{abstract}
We analyze the propagation of Lipschitz continuity of solutions to various linear and nonlinear drift-diffusion systems, with and without incompressibility constraints. Diffusion is assumed to be either fractional or classical. Such equations model the incompressible Navier-Stokes systems, generalized viscous Burgers-Hilbert equation and various active scalars. We derive conditions that guarantee the propagation of Lipschitz regularity by the incompressible NSE in the form of a non-local, one dimensional viscous Burgers-type inequality. We show the analogous inequality is always satisfied for the generalized viscous Burgers-Hilbert equation, in any spatial dimension, leading to global regularity. We also obtain a regularity criterion for the Navier-Stokes equation with fractional dissipation $(-\Delta)^{\alpha}$, regardless of the power of the Laplacian $\alpha\in(0,1]$, in terms of H\"older-type assumptions on the solution. Such a criterion appears to be the first of its kind when $\alpha\in(0,1)$. The assumptions are critical when $\alpha\in[1/2,1]$, but sub-critical when $\alpha\in(0,1/2)$. Furthermore, we prove a partial regularity result under supercritical assumptions, which is upgraded to a regularity criterion if we consider the pressure-less drift-diffusion problem when $\alpha\in(1/2,1]$. That is, a certain H\"older super-criticality barrier is broken when considering a drift-diffusion equation without incompressibility constraints (no pressure term), which to our knowledge was never done before. Depending on the scenario, our results either improve on, generalize or provide different proofs to previously known regularity results for such models. The technique we use builds upon the evolution of moduli of continuity as introduced by Kiselev, Nazarov, Volberg and Shterenberg.
\end{abstract}

\textbf{2010 MSC:} 35Q30, 76D03, 35B65, 35B50\\
\textbf{Keywords:} Incompressible Navier-Stokes; regularity; maximum principle; non-local drift-diffusion
\section{Introduction}
\subsection{Qualitative summary of the results}
In this work, we analyze the Lipschitz regularity of solutions to drift-diffusion equations with and without incompressibility constraints and/or nonlocal perturbations. We will consider the case of classical and fractional dissipation, obtaining various new regularity, conditional regularity and partial regularity results, under critical and, in some cases even supercritical, H\"older-type assumptions on the drift velocity. We will provide a different proof to some known results, improve on them in certain cases as well as obtain new results in other cases. The models we analyze include the incompressible Navier-Stokes system, the supercritical surface quasi-geostrophic equation and a multidimensional, generalized Burgers-Hilbert model. The technique we use is based upon the idea of tracking the evolution of Lipschitz moduli of continuity as introduced by Kiselev, Nazarov, Volberg and Shterenberg, where they were able to control the local behavior of active scalars evolving under the critically dissipative surface quasi-geostrophic equation in \cite{KNV2007} and the critically dissipative fractional Burgers equation in \cite{KNS2008}. As a followup to a recent work of ours \cite{Ibdah2020b}, we extend such ideas from their current domain of scalar equations to the previously mentioned advection-diffusion systems in any spatial dimensions $d\geq3$ and in the absence of physical boundaries. A key ingredient used in analyzing the pressure term when studying the incompressible NSE is a subtle observation regarding its regularity made by Silvestre in an unpublished work \cite{Silvestre2010unpub}, as well as Constantin \cite{Constantin2014}, Isett \cite{isett2013regularity} (see also Isett and Oh \cite{IO2016}) and De Lellis and Sz\'ekelyhidi Jr. \cite{DS2014}. See also \cite{CDR2020} for further results in this direction. We formulate our results and motivate our work in \S\ref{secmainres}. For now, we summarize our results and compare with previous works.

Recall the classical initial-value problem formulation of the incompressible Navier-Stokes (NS) system in the absence of physical boundaries,
\begin{equation}\label{maineq}
	\begin{cases}
		\partial_tu(t,x)-\Delta u(t,x)=(u\cdot\nabla)u(t,x)+\nabla p(t,x),\\
		\nabla\cdot u(t,x)=0,\\
		u(0,x)=u_0(x),
	\end{cases}
\end{equation}
 where $u_0:\rd\rightarrow\rd$ is a given, smooth divergence-free vector field. By locally analyzing this nonlinear, nonlocal system of equations, we derive a criterion for the preservation of Lipschitz moduli of continuity in this scenario, see Hypothesis \ref{hypoth} and Theorem \ref{thm1} below.  \emph{We emphasize that this is not a solution to the global regularity problem}; Hypothesis \ref{hypoth} is best interpreted as one of the many ``regularity criteria'' available for the incompressible Navier-Stokes system. It may be worthwhile noting that it is highly unlikely that our analysis is applicable for the averaged NSE introduced by Tao \cite{Tao2016}, or related shell-models. Our analysis is localized in space, and thus we will be explicitly using the pointwise structure of the nonlinearity and incompressibility in our work, rather than relying on energy-type arguments. In particular, we will attempt to prove a ``maximum principle'', and so the pointwise structure of the equation is crucial.
 
 As will be demonstrated later on, the difficulty in dealing with the pressure term is the fact that it is related to the velocity vector field via a nonlinear order one operator, namely 
 \[
 \nabla p=\sum_{i,j=1}^{d}\nabla R_iR_j(u_iu_j),
 \]
 where $\{R_i\}_{i=1}^d$ are the Riesz transforms. Thus, to demonstrate the applicability of the ideas presented herein, we will consider various simplifications of the pressure term. We start by replacing the pressure term and incompressibility constraint in \eqref{maineq} by a simpler, singular integral operator of order zero, $\mathcal{N}$, and consider the resulting model in the periodic setting
\begin{equation}\label{maineq2}
\begin{cases}
	\partial_t u(t,x)-\Delta u(t,x)=(u\cdot\nabla)u(t,x)+\mathcal{N}u(t,x),\\
	u(0,x)=u_0(x).
\end{cases}
\end{equation}
The difficulty in analyzing this model in dimensions larger than one is the lack of any a-priori estimates, not even $L^2$ due to the lack of any incompressibility constraints. This model is a viscous generalization of the Burgers-Hilbert model, where the latter is the following one-dimensional equation
\begin{equation}\label{BH}
\partial_tu=u\partial_xu+H[u],
\end{equation}
with $H$ being the Hilbert-transform. Equation \eqref{BH} was first introduced by Marsden and Weinstein \cite{MW1983} to model a quadratic approximation for the motion of the boundary of a vortex patch. It was also derived by Biello and Hunter \cite{BH2010} as a model for waves with constant nonzero linearized frequency. Since then it has attracted a lot of attention in the literature, for instance \cite{BN2014, BZ2017, CCG2010, HI2012, HITW2015, KV2020}. In particular, just like the inviscid Burgers equation, model \eqref{BH} is known to develop a singularity in finite time, as was shown in \cite{CCG2010} and more recently in the preprint \cite{Yang2020}. We were also informed via private communication that Pasqualotto and Oh \cite{PO2021} recently obtained a different proof of this fact, even when adding fractional dissipation of the form $(-\Delta)^{\alpha}$, $\alpha\in(0,1/2)$. We show that adding classical dissipation would result in a globally well-posed problem that would never develop any singularities in finite time provided the initial data is smooth, see Theorem \ref{thm2} below for precise statements. Of course this result is much more interesting in high dimensions, it is trivial when the dimension is one. To our knowledge, this result in new, and as mentioned previously, not at all obvious due to the lack of any a-priori bounds in high dimensions.

Next, we will analyze linear drift-diffusion systems with and without incompressibility constraints, with classical and fractional diffusion. Namely, for $\alpha\in(0,1]$, we will consider
\begin{equation}\label{classicalddintro}
\begin{cases}
	\partial_t u(t,x)+(-\Delta)^{\alpha} u(t,x)=(b\cdot\nabla)u(t,x), \\
	u(0,x)=u_0(x),
\end{cases}
\end{equation}
as well as 
\begin{equation}\label{maineq3intro}
\begin{cases}
	\partial_t u(t,x)+(-\Delta)^{\alpha} u(t,x)=(b\cdot\nabla)u(t,x)+\nabla p(t,x), \\
	\nabla\cdot u(t,x)=0, \\
	u(0,x)=u_0(x),
\end{cases}
\end{equation}
where $(-\Delta)^{\alpha}$ is the fractional Laplacian; the nonlocal operator of order $2\alpha$ whose Fourier symbol is $|\zeta|^{2\alpha}$ and having the pointwise representation
\begin{equation}\label{fraclap}
(-\Delta)^{\alpha}\theta (x):=C_{d,\alpha}P.V.\int_{\rd}\frac{\theta(x)-\theta(x-z)}{|z|^{d+2\alpha}}\ dz.
\end{equation}
Here, $u_0$ is assumed to be a given, smooth vector-field and $b:[0,T]\times\rd\rightarrow\rd$ is also a given vector field, \emph{which may or may not be incompressible when considering \eqref{classicalddintro}}, but is assumed to be incompressible in the case of \eqref{maineq3intro}. The question that we will be interested in is the following: under what assumptions on $b$ can we get some degree of regularity on the solution $u$? Notice that both \eqref{classicalddintro} and \eqref{maineq3intro} have a natural scale invariance: given a vector field $b$, an $\epsilon>0$ and a solution to either, if we define $u_{\epsilon}(t,x):=\epsilon^{2\alpha-1}u(\epsilon^{2\alpha}t,\epsilon x)$, $b_{\epsilon}(t,x):=\epsilon^{2\alpha-1}b(\epsilon^{2\alpha}t,\epsilon x)$ and $p_{\epsilon}(t,x):=\epsilon^{4\alpha-2}p(\epsilon^{2\alpha}t,\epsilon x)$, then the rescaled functions solve the same equations (with a rescaled initial data). A general rule of thumb is that if the drift-velocity $b$ is assumed to lie in a critical or subcritical space, meaning that it lies in some normed space $X$ such that $\|b_{\epsilon}\|_X=\epsilon^{r}\|b\|_{X}$ for some $r\geq0$, then one should expect some degree of regularity on the solution $u$. Throughout the years this has been demonstrated by various authors and in various spaces, the list is far too long to include here. In this work, we will say that a solution to either \eqref{classicalddintro} or \eqref{maineq3intro} is regular on $[0,T]$ provided 
\begin{equation}\label{defregintro}
\int_0^T\|\nabla u(t,\cdot)\|_{L^{\infty}}dt<\infty.
\end{equation}
The above quantity is critical with respect to the previously mentioned scaling invariance, regardless of the value of $\alpha\in(0,1]$. Our motivation behind characterizing regularity via \eqref{defregintro} stems from the fact that controlling such a quantity implies control of higher order norms when considering nonlinear equations such as Burgers equation, surface quasi-geostrophic equation, Euler, NSE and related models. This regularity criterion is usually referred to as the ``weak'' Beale-Kato-Majda criterion, where the latter involves controlling the weaker quantity
\[
\int_0^T\|\nabla \times u(t,\cdot)\|_{L^{\infty}}dt<\infty.
\]
We refer the reader to the classical paper \cite{BKM1984} and the more recent ones \cite{CCW2011, CCW2012, Dai2017} for more details regarding this remark. Though the fractional NSE may not be a physically relevant model in and off itself, we still find it to be mathematically interesting, as does various members of the mathematical community. Understanding the fractional dissipative case may aid in understanding the classical one, and this model is indeed an active area of research, see for instance \cite{Chae2007, LMZ2019, Wu2006, YZ2012} and the references therein.

Controlling $\|\nabla u(t,\cdot)\|_{L^{\infty}}$ will be achieved by propagating Lipschitz moduli of continuity, and as the analysis that we perform is localized, we would need to assume local bounds on $b(t,\cdot)$. To be more specific, our main assumption will be the existence of a function $g\in L^{p}(0,T)$, some  $p>0$ (not to be confused with the pressure $p$ in NSE, the distinction will be clear from the context), and a $\beta\in[0,1)$ such that 
\begin{equation}\label{condbintro}
\sup_{x\neq  y}\frac{|b(t,x)-b(t,y)|}{|x-y|^{\beta}}\leq g(t),\quad a.e.\ t\in[0,T].
\end{equation}
When $\beta=0$, \eqref{condbintro} is replaced with $\|b(t,\cdot)\|_{L^{\infty}}\leq g(t)$. Due to the fact that Riesz transforms are embedded into the pressure term, when analyzing \eqref{maineq3intro}, we will always have to require $\beta>0$ in this case. That being said, and following the scale invariance, one would expect the quantity \eqref{defregintro} to be under control whenever $b$ lies in a critical or subcritical space. That is, whenever $b\in L_t^{p}C_x^{0,\beta}([0,T]\times\rd)$, for $p\geq p^*$, with 
\begin{equation}\label{defpstarintro}
p^*:=\frac{2\alpha}{2\alpha+\beta-1},
\end{equation}
being the critical exponent. We would need to further assume that the bounding function $g$ in  \eqref{condbintro} is non-decreasing. This would imply that  for almost every $t\in[0,T]$, we have 
\[
\sup_{s\in[0,t]}[b(s,\cdot)]_{C_x^{0,\beta}}\leq g(t),
\]
which is a slightly stronger assumption than simply requiring $b\in L_t^pC_x^{0,\beta}([0,T]\times\rd)$. Nevertheless, \emph{we emphasize that we do not assume boundedness of the function $g$: it merely is a non-decreasing $L^p$ function}. For instance, bounds of the form $(T-t)^{-\kappa}$ are allowed, for appropriately chosen $\kappa>0$. Thus, with our requirements, it seems more natural to replace the finiteness of the standard $L_t^pC_x^{0,\beta}$ (semi)-norm with the requirement that 
\begin{equation}\label{altcondbintro}
\left(\int_0^T\sup_{s\in[0,t]}[b(s,\cdot)]_{C_x^{0,\beta}}^pdt\right)^{1/p}<\infty.
\end{equation}
We invite the reader to readily verify that the above quantity \emph{scales exactly like the standard $L_t^pC_x^{0,\beta}$ (semi)-norm.} With those remarks in mind, we show that if 
\begin{equation}\label{defgammaintro}
\gamma:=\frac{1}{2\alpha+\beta-1}=\frac{p^*}{2\alpha},
\end{equation}
then solutions to \eqref{classicalddintro} satisfy 
\begin{equation}\label{bduddintro}
\|\nabla u(t,\cdot)\|_{L^{\infty}}\lesssim g^{\gamma}(t),\quad a.e.\ t\in[0,T],
\end{equation}
while solutions to \eqref{maineq3intro} satisfy 
\begin{equation}\label{bduNSEintro}
\|\nabla u(t,\cdot)\|_{L^{\infty}}\lesssim g^{\gamma}(t)\exp\left(\int_0^tg^{p^*}(t)dt\right),\quad a.e.\ t\in[0,T],
\end{equation}
see Theorems \ref{thm3} and \ref{thm4} below. Notice that when $\alpha\in(1/2,1]$, the parameter $\gamma$ is below the critical level $p^*$, and so to get regularity (satisfy condition \eqref{defregintro}) of solutions to \eqref{classicalddintro}, it is sufficient to make a \emph{supercritical assumption} on the drift velocity, which is rather surprising. Unfortunately at this point, we cannot break the analogous criticality barrier for solutions to \eqref{maineq3intro}, and so in this case we have to stick to the critical assumption (the limitations come from the exponential term). The best we were able to achieve, so far, is a partial regularity result of the form 
\[
\|\nabla u(t,\cdot)\|_{L^{\infty}}\lesssim \lambda(t)\log(\lambda(t)),\quad a.e.\  t\in[0,T],
\]
where 
\[
\log(\lambda(t))\approx 
\begin{cases}
g^{\gamma}(t)\exp\left(\int_0^tg^{1-\gamma\beta}(s)ds\right), &\alpha\in[1/2,1],\\
g^{(1-\beta)\gamma}\exp\left(\int_0^tg(s)ds\right),&\alpha\in(0,1/2),
\end{cases}
\]
meaning that 
\begin{equation}\label{partregintro}
\int_0^T\log\left(\|\nabla u(t,\cdot)\|_{L^{\infty}}\right)dt<\infty.
\end{equation}
See discussion towards the end of \S\ref{secimplication} below for the importance of this last remark as it relates to the global regularity problem of the classical incompressible Navier-Stokes system.
\subsection{Previous work and implications of our results}\label{secimplication}
The analysis of \eqref{maineq3intro} with drifts in $L_t^{p^*}C_x^{0,\beta}$ and classical diffusion ($\alpha=1$) was considered previously by Silvestre and Vicol in \cite{SV2012}, where they showed that H\"older regularity of initial data does persist on $[0,T]$ provided $b\in L_t^{p^*}C_x^{0,\beta}$ (actually they consider more general spaces than $C_x^{0,\beta}$, see discussion following Theorem \ref{thm4} below). However, it was explicitly mentioned in \cite{SV2012} that their technique fails in controlling Lipschitz norms, and so our results complement theirs in that regard. Further, to our knowledge, our results are new for the case of fractional dissipation ($\alpha\in(0,1)$), even for the drift-diffusion problem \eqref{classicalddintro}, let alone \eqref{maineq3intro}. It is also worth noting that Zhang \cite{Zhang2006} also analyzed the Lipschitz regularity of solutions to \eqref{classicalddintro} and \eqref{maineq3intro} (with $\alpha=1$) when the drift velocity $b$ lies in a \emph{sub-critical} Kato class. Nevertheless, the result obtained in \cite{Zhang2006} is ``local'' boundedness of the Lipschitz norm, as opposed to the global result we obtain here. 

Before precisely formulating our results, let us make a few remarks which we find noteworthy. For starters, the requirement that $\beta+2\alpha-1\geq0$ is known to be sharp \cite{SVZ2013}, with the borderline case $\beta=1-2\alpha$ being delicate. Such a fact is also reiterated in a forthcoming work by Pasqualotto and Oh \cite{PO2021}. To our knowledge, it first appeared in the context of showing that solutions to \eqref{classicalddintro} gain a small degree of H\"older regularity by Constantin and Wu \cite{CW2008b} when the drift velocity $b\in L_t^{\infty}C_x^{0,1-2\alpha}$ and is divergence free. The divergence free structure of the drift was later relaxed by Silvestre \cite{Silvestre2012a}. Silvestre's result was later on improved by himself in a second paper \cite{Silvestre2012b}, where he showed that under a  further mild smallness assumption on $b\in L_t^{\infty}C_x^{0,1-2\alpha}$, one can prove that the solution is H\"older continuous with any exponent $\kappa\in(0,1)$ (as opposed to some $\kappa\in(0,1)$ in \cite{CW2008b, Silvestre2012a}). We emphasize that all previously mentioned results only prove H\"older continuity of the solution, but nothing about differentiability (when $\beta=1-2\alpha$ and $p=p^*=\infty$). When it comes to obtaining regularity criterion for the supercritical SQG (equation \eqref{classicalddintro} in two dimensions with $\alpha\in(0,1/2)$ and $b=R^{\perp}u$, where $R$ is the Riesz transform), Dong and Pavlovic were able to make do with $b\in C_tC_x^{0,1-2\alpha}$, but not $L_t^{\infty}C_x^{0,1-2\alpha}$. However, this result explicitly uses the structure of the SQG (in particular, the divergence-free nature of $b$), and it is not clear whether one can prove differentiability of solutions to \eqref{classicalddintro} with $\alpha\in(0,1/2)$ and an abstract drift $b\in L_t^{\infty}C_x^{0,1-2\alpha}$. This problem is still open (to our knowledge), and will remain so even after our work.

The situation becomes much better under the assumption that $b\in L_t^{\infty}C_x^{0,\beta}$ when $\beta+2\alpha-1>0$. For the case of the SQG, Constantin and Wu \cite{CW2008a} were able to show regularity under such an assumption using Besov space techniques. This was later on improved by Dong and Pavlovic to the critical H\"older spaces $L_t^{p^*}C_x^{0,\beta}$ \cite{DP2009a} and the associated critical Besov spaces in \cite{DP2009b}. Similarly, Silvestre \cite{Silvestre2012b} was able to show that for the abstract equation \eqref{classicalddintro} (without any divergence-free assumptions on $b$), the solution has a H\"older continuous derivative, i.e. $u\in L_t^{\infty}C_x^{1,\kappa}$, for some small $\kappa\in(0,1)$, and is therefore classical (when $b\in L_t^{\infty}C_x^{0,\beta}$). Our results are more in line with Silvestre's, namely we do not make any assumption on the divergence of the drift velocity, nor on its relation to the solution $u$. They allow for slightly weaker assumptions, namely $L_t^{\gamma}C_x^{0,\beta}$ (with the non-decreasing assumption on the bound) rather than $L_t^{\infty}C_x^{0,\beta}$, at the expense of proving integrability of the Lipschitz constant, rather than its uniform boundedness and H\"older continuity. Our results also improve on those in \cite{Silvestre2012a} when $\alpha\in(1/2,1]$, namely we prove integrability of the Lipschitz constant under a supercritical assumption on the drift velocity. To our knowledge, in the context of drift-diffusion type problems, so far ``slightly supercritical'' barriers were broken, where ``slightly'' usually means one is below the critical scaling level by a logarithmic factor. See for instance \cite{DKSV2014, Ignatova2014, Tao2009}, the preprint \cite{BC2021} and the references therein, though none of these results relate to ours. It is also worth noting that unlike \cite{CW2008b, Silvestre2012a, Silvestre2012b}, we do not study any smoothing effect of the transport-diffusion operator. We focus on the propagation of regularity instead, obtaining a-priori bounds under the assumption that we do have a classical solution.

The reader may be wondering why we are unable to hit the critical threshold $p^*$ when $\alpha\in(0,1/2)$ (recall that in this regime, $\gamma>p^*$). Let us remark that for the specific critical norm $\beta=1-2\alpha$ and $p^*=\infty$, Silvestre's work \cite{Silvestre2012a, Silvestre2012b} as well as Constantin and Wu \cite{CW2008b}, imply H\"older continuity, with a small H\"older exponent, but not differentiability. For the specific SQG case, Dong and Pavlovic \cite{DP2009a} were able to get regularity when $b\in C_tC_x^{0,1-2\alpha}$ or $L_t^{p^*}C_x^{0,\beta}$ if $\beta+2\alpha-1>0$. This tells us that the critical case $p=p^*=\infty$ when $\alpha\in(0,1/2)$ is quite delicate, and unless one specializes to specific drifts, one may not get differentiability. Thus, before trying to extend our results to the other critical cases when $\alpha\in(0,1/2)$, it might be a good idea to obtain (sharp) explicit blowup rates on certain H\"older norms for solutions to the fractional Burgers equation (which are known to lose regularity in finite time), and make sure that it is indeed possible to hit the critical threshold without making further assumptions on $b$ (such as divergence-free). The situation when $\alpha\in[1/2,1]$ is better. For instance, it was shown in \cite{SVZ2013} that when $\alpha=1$ and the spatial dimension is two, then there are distributional solutions to \eqref{classicalddintro} which obey a logarithmic modulus of continuity for positive time, provided the drift velocity is independent on time and locally integrable in space, which is a supercritical assumption.
 
 The advantage of using the approach presented herein is that, one, we are able to recover most of the previously mentioned results in a unified fashion that is arguably much simpler than what has been used, and in some cases, relaxing some of the conditions on the drift velocity. Moreover, when working in the periodic setting, we do not use the classical maximum principle that solutions of \eqref{classicalddintro} satisfy, as opposed to the work of Silvestre \cite{Silvestre2012a,Silvestre2012b}. It is only implicitly used when proving the same estimates in the whole space, and this potentially is a mere technicality that could be overcome. As a consequence, our approach is robust enough that it extends, without much difficulty, to the much more interesting ``linear'' Navier-Stokes system with fractional and classical dissipation \eqref{maineq3intro}, since it is not known whether one has a maximum principle in this case. Actually proving a maximum principle, or even $L^{\infty}$ estimate, for \eqref{maineq3intro} would immediately imply regularity for the nonlinear problem when $\alpha\in(1/2,1]$ (as in the case with Burgers). One of the goals of this work is to provide an alternative route to proving regularity, without relying on conserved quantities (to be specific, Theorems \ref{thm1} and \ref{thm2} below). Two, it allows us to reproduce part of the result in \cite{SV2012} when dissipation is classical ($\alpha=1$), as well as extend this criterion to the fractional dissipation case $\alpha\in(0,1)$. Three, it also reveals a supercritical regularity result for \eqref{classicalddintro} when $\alpha\in(1/2,1]$. 
 
 The importance of this last remark stems from a classical result obtained by  Foias, Guillop\'e and Temam \cite{FGT1981}. In that paper, they showed that Leray-Hopf weak solutions to the incompressible Navier-Stokes system \eqref{maineq} in the periodic three dimensional setting emanating from smooth initial data satisfy the following partial regularity: for any  $T\in(0,\infty)$, and any integer $m\geq1$, we have
 \begin{equation}\label{FGTintro}
 \int_0^T\|u(t,\cdot)\|^{\zeta_m}_{\dot{H}^m}dt<\infty,
 \end{equation}
 where $\|\cdot\|_{\dot{H}^m}$ is the Sobolev semi-norm and the exponent $\zeta_m$ is given by 
 \[
 \zeta_m:=\frac{2}{2m-1}.
 \]
Recall that in three-dimensions, the Sobolev embedding theorem tells us whenever $m>3/2$,
\[
\sup_{x\neq y}\frac{|f(x)-f(y)|}{|x-y|^{\beta}}\lesssim\|f\|_{H^m},\quad  \forall\beta\in(0,m-3/2].
\]
Choosing $m=2$, we get that 
\[
\int_0^T[u(t,\cdot)]_{C_x^{0,\beta}}^{2/3}dt<\infty, \quad \forall \beta\in(0,1/2].
\]
Recalling estimate \eqref{bduddintro} when $\alpha=1$ (and assuming for the moment we are working with the standard $L_t^pC_x^{0,\beta}$ semi-norm and not \eqref{altcondbintro}),
\[
\|\nabla u(t,\cdot)\|_{L^{\infty}}\lesssim [u(t,\cdot)]_{C_x^{0,\beta}}^{1/(1+\beta)},\quad \forall\beta\in(0,1),
\]
and so we are able to apply the Foias-Guillop\'e-Temam a-priori estimate to get regularity by choosing $\beta=1/2$. The case when $\beta=0$ is special, and one has 
\[
\int_0^T\|u(t,\cdot)\|_{L^{\infty}}dt<\infty,
\]
see also \cite{Constantin2001, Constantin2014, Tao2013}. Other supercritical a-priori bounds can be found in \cite{CV2014, Constantin1990} and the references tehrein. Of course, as mentioned previously, \emph{estimate \eqref{bduddintro} is only guaranteed without the pressure term}. We do not know whether this can be done for the NSE as well, the best we have in this case is the partial regularity result \eqref{partregintro}. The other issue is that we are working with the slightly stronger norm \eqref{altcondbintro}, so this is another technical difficulty that needs to be bypassed either by improving on the bound \eqref{FGTintro} or relaxing this condition in our result before hoping to apply this for the NSE. Those issues are currently being investigated by the author, and any interesting  results will be reported in a forthcoming manuscript. In particular, we make no claim that the estimates we obtain herein are sharp. Indeed, the idea of tracking the evolution of moduli of continuity is by no means a perturbative technique, and thus there is no ``systematic'' way of constructing those objects. The results reported in this work are a consequence of choosing moduli of continuity of the form
\begin{equation}\label{transtmodintro}
\Omega(t,\xi):=\lambda(t)\omega(\mu(t)\xi),
\end{equation}
where $\lambda, \mu$ and $\omega$ are chosen depending on the model at hand. Our particular choice of $\Omega$ in \eqref{transtmodintro} will be motivated in \S\ref{secpfmainres}. Such a construction is not unique, and a more careful construction could lead to sharper bounds. Apart from the above, another direction where our results could be improved is upgrading the $L_t^1$ control over the Lipschitz constant to $L_t^{\infty}$ whenever the H\"older semi-norm of $b$ is not uniformly bounded. The reader will soon realize that this is not possible if we consider moduli of continuity of the form \eqref{transtmodintro}, and that a different construction is needed. Our aim here is to lay down some of the fundamental ideas that hopefully could be used later on to get stronger results, or possibly generalized to related systems and models.

This paper is organized as follows. In \S\ref{secmainres} we formulate our results and motivate the current work, while in \S\ref{secprelim} we list some preliminary results that will be used. Section \ref{seccontest} concerns itself in obtaining some continuity estimates with regard to the nonlocal operators that we will be working with, before proving our main results in \S\ref{secpfmainres}.
\section{Main results}\label{secmainres}
\subsection{Formulation and precise statements}\label{secform}
Let us now precisely formulate our results. Throughout this work, whenever $X$ is a functional space, we abuse notation and say a vector field $u\in X$ to mean that every component $u_j\in X$. With regard to systems \eqref{maineq} and \eqref{maineq2}, we are mainly interested in smooth (classical, pointwise), periodic solutions arising from smooth, periodic vector-fields. For simplicity, we restrict ourselves to solutions with zero averages over the fundamental periodic domain. To be more precise, we define the space 
\begin{equation}\label{defcinftyper}
\dot{C}^{\infty}_{per}:=\left\{\theta\in C^{\infty}(\rd):\theta(x+L e_k)=\theta(x)\  \forall x\in\rd,\ k=1,\cdots,d,\ \int_{\tr}\theta(x)\ dx=0\right\},
\end{equation}
where $L>0$ is arbitrary, $\{e_k\}_{k=1}^d$ is the standard unit basis of $\rd$ and $\tr:=[0,L]^d$, and assume that the initial vector-field $u_o\in\dot{C}^{\infty}_{per}$ (of course, we also impose the condition $\nabla\cdot u_0=0$ in the case of NSE). This can be done without any loss in generality for the case of \eqref{maineq} due to its Galilean invariance, and we prefer to avoid dealing with unnecessary complications in the case of \eqref{maineq2}. Recall that in the case of the NSE the pressure term is recovered from the vector-field $u$ via solving
\[
-\Delta p=\text{div}\left[(u\cdot\nabla)u\right],
\]
 and so one can think of the PDE in \eqref{maineq} in terms of $u$ only. That being said, by a classical solution to \eqref{maineq} or \eqref{maineq2} we mean 
\begin{defs}\label{classsol}
A vector-field $u:[0,T)\times\rd\rightarrow\rd$ is said to be a classical solution to \eqref{maineq} or \eqref{maineq2} on a time interval $[0,T)$ if $u\in C^{\infty}([0,T)\times \rd)$, $u(t,\cdot)\in \dot{C}^{\infty}_{per}$ for every $t\in[0,T)$, satisfies \eqref{maineq} or \eqref{maineq2} in the pointwise sense for every $(t,x)\in[0,T)\times\rd$ and for which
\[
\lim_{t\rightarrow0^+}\left|\partial^{\alpha}_xu(t,x)-\partial^{\alpha}_xu_0(x)\right|=0,
\]
holds true for every $x\in\rd$ and every multi-index $\alpha\in \mathbb{N}^d$.
\end{defs}

\begin{defs}\label{defmod}
 We say a function $\omega:[0,\infty)\rightarrow[0,\infty)$ is a modulus of continuity if $\omega\in C([0,\infty))$ is non-decreasing and is piecewise $C^2$ on $(0,\infty)$ with finite one-sided derivatives such that $\omega'(\xi^+)\leq\omega'(\xi^-)$ for every $\xi\in(0,\infty)$. A modulus of continuity $\omega$ is said to be strong if in addition $\omega(0)=0$, $0<\omega'(0)<\infty$ and $\displaystyle{\lim_{\xi\rightarrow0^+}\omega''(\xi)=-\infty}$.
\end{defs}
\begin{defs}\label{timdepmoddef}
Let $T>0$ be given. A function $\Omega\in C([0,T]\times[0,\infty))$ is said to be a time-dependent strong modulus of continuity on $[0,T]$ if $\Omega(t,\cdot)$ is a strong modulus of continuity for each $t\in[0,T]$, $\Omega(\cdot,\xi)$ is piecewise $C^1$ on $[0,T]$ for each fixed $\xi\in[0,\infty)$ (with finite one-sided derivatives) and satisfies at least one of the following conditions:
\begin{enumerate}
\item $\Omega(\cdot,\xi)$ is nondecreasing as a function of time for each fixed $\xi$,
\item $\partial_\xi\Omega(\cdot,0)$ is continuous as a function of time. 
\end{enumerate}
\end{defs}
\begin{hypoth}\label{hypoth}
Let $d\geq3$, and suppose $C_d\geq1$ is a given absolute constant depending only on the dimension $d$. Then there exists a strong modulus of continuity $\Omega_0(\xi)$ such that for any given $T>0$, one can construct a time-dependent strong modulus of continuity $\Omega(t,\xi)$ on $[0,T]$ such that $\Omega(0,\xi)=\Omega_0(\xi)$ and for which 
\begin{equation}\label{modevolintro}
	\partial_t\Omega(t,\xi)-4\partial^2_\xi\Omega(t,\xi)-\Omega(t,\xi)\partial_\xi\Omega(t,\xi)-C_d\left[\int_0^{\xi}\frac{\Omega^2(t,\eta)}{\eta^2}\ d\eta+\Omega(t,\xi)\int_{\xi}^{\infty}\frac{\Omega(t,\eta)}{\eta^2}\ d\eta\right]\geq0,
\end{equation}
holds true for every $(t,\xi)\in(0,T]\times(0,\infty)$.
\end{hypoth}
\begin{rmk}
	Note that we slightly modify the definition of a modulus of continuity from \cite{KNS2008,KNV2007} by replacing the concavity assumption with the weaker condition $\omega'(\xi^+)\leq\omega'(\xi^-)$, as well as allow for bounded moduli of continuity. To get interesting results, the initial modulus of continuity $\Omega(0,\cdot)$ need to be independent on $T$, as will be explained in \S\ref{pfthm1} (ideally, one would like $T=\infty$). Moreover, strictly speaking, the terms $\partial_t\Omega$ and $\partial^2_\xi\Omega$ should be interpreted as left derivatives, since $\Omega$ is assumed to be piecewise $C^1$ in time and piecewise $C^2$ in space.
\end{rmk}
\begin{thm}\label{thm1}
	Let $d\geq3$ and suppose $u_0$ is a smooth divergence free vector-field such that $u_0\in \dot{C}^{\infty}_{per}$. Let $T_*>0$ be the maximal time of existence of the classical solution to \eqref{maineq} (according to Definition \ref{classsol}). Let $T>0$ be given, and assume that Hypothesis \ref{hypoth} is true for our choice of $d$ and $T$. It follows that $T_*>TB^{-2}$ and 
	\begin{equation}\label{mainbd1}
	\|\nabla u(t,\cdot)\|_{L^{\infty}}<B^2\partial_\xi\Omega(t,0),\quad \forall t\in\left[0,\frac{T}{B^2}\right],
	\end{equation}
where $B>0$ depends only on the $W^{1,\infty}$ norm of the initial data and the initial modulus of continuity $\Omega(0,\xi)$. For instance, one can take any 
\[
B\geq\frac{2\|u_0\|_{L^{\infty}}}{\Omega_0(\delta)}+\left(\frac{\delta}{\Omega_0(\delta)}\|\nabla u_0\|_{L^{\infty}}\right)^{1/2},
\]
where $\delta\in(0,1]$ is a small parameter depending on the structure of $\Omega_0(\xi)$; see Lemma \ref{buildmod}. In particular, if Hypothesis \ref{hypoth} is true with $T=\infty$, then we get that the solution $u$ is globally regular and satisfies
\begin{equation}\label{uniformlipbd}
\|\nabla u(t,\cdot)\|_{L^{\infty}}< B^2\partial_\xi\Omega(t,0), \quad \forall t\geq0.
\end{equation}
\end{thm}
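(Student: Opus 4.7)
\textbf{Proof plan for Theorem \ref{thm1}.}

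The plan is to reduce the claim to a preservation-of-modulus statement via rescaling, then run a Kiselev--Nazarov--Volberg--Shterenberg style breakthrough argument in which Hypothesis \ref{hypoth} is precisely the pointwise inequality that blocks the breakdown. First I would normalize the initial data: using the scaling $u^{(B)}(t,x):=B^{-1}u(B^{-2}t,B^{-1}x)$, which preserves the system \eqref{maineq} (with rescaled pressure), I would choose $B$ large enough so that $u^{(B)}(0,\cdot)$ obeys $\Omega_0$ in the sense
\[
|u^{(B)}(0,x)-u^{(B)}(0,y)|\leq \Omega_0(|x-y|)\quad\text{for all }x,y\in\rd.
\]
The lower bound on $B$ in the statement comes from the two regimes $|x-y|\leq \delta$ (controlled by the Lipschitz constant of $u_0$ together with $\Omega_0'(0)$) and $|x-y|\geq\delta$ (controlled by $\|u_0\|_{L^\infty}$). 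After rescaling, the claim \eqref{mainbd1} becomes the statement that $u^{(B)}(t,\cdot)$ obeys $\Omega(t,\cdot)$ on $[0,T]$, from which the pointwise Lipschitz bound is read off by sending $\xi\to 0^+$, using strength of $\Omega$ (i.e.\ $\partial_\xi\Omega(t,0)<\infty$).

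Next I would set up the breakthrough contradiction. Denote the rescaled solution by $u$ for brevity. Assume for contradiction that the modulus is broken before time $T\wedge T_*$; because $u\in C^\infty$ and $u(t,\cdot)-\langle u(t,\cdot)\rangle$ decays nicely on $\mathbb{T}^d$, compactness plus continuity of $\Omega$ in time give a first breakthrough time $t_0\in(0,T\wedge T_*)$ and points $x_0\neq y_0$ with $|x_0-y_0|=\xi_0$ such that
\[
|u(t_0,x_0)-u(t_0,y_0)|=\Omega(t_0,\xi_0),
\]
while $|u(s,x)-u(s,y)|\leq\Omega(s,|x-y|)$ for all $s<t_0$ and all $x,y$. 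Projecting onto the unit vector $e:=\bigl(u(t_0,x_0)-u(t_0,y_0)\bigr)/\Omega(t_0,\xi_0)$ reduces the analysis to a scalar maximum principle: the function $\Phi(t,x,y):=e\cdot(u(t,x)-u(t,y))-\Omega(t,|x-y|)$ attains a local maximum (equal to $0$) at $(t_0,x_0,y_0)$, so $\partial_t\Phi(t_0,x_0,y_0)\geq 0$. Computing $\partial_t\Phi$ using \eqref{maineq} is the heart of the matter.

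The three remaining ingredients are estimates for the dissipation, the transport, and the pressure at the touching configuration. For dissipation, the standard Hessian-at-a-maximum argument (applied to $\Phi$ viewed as a function of $(x,y)\in\rd\times\rd$) shows that $e\cdot(\Delta u(t_0,x_0)-\Delta u(t_0,y_0))\leq 4\partial^2_\xi\Omega(t_0,\xi_0)$, giving the coefficient $4$ in \eqref{modevolintro}. For transport, one uses that at the critical configuration $\nabla_x\Phi=\nabla_y\Phi=0$ to express $e\cdot\partial_x u(t_0,x_0)$ and $e\cdot\partial_y u(t_0,y_0)$ in the direction $(x_0-y_0)/|x_0-y_0|$ in terms of $\partial_\xi\Omega$; pairing with the velocity difference $u(t_0,x_0)-u(t_0,y_0)=\Omega(t_0,\xi_0)e$ yields the Burgers-type contribution $\Omega\,\partial_\xi\Omega$. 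The main obstacle is the pressure term: $\nabla p=\sum_{i,j}\nabla R_iR_j(u_iu_j)$ is nonlocal of order one, so a purely pointwise bound is unavailable. Here I would appeal to the Silvestre/Constantin/Isett type observation alluded to in \S\ref{secimplication}: splitting the singular integral representation of $e\cdot(\nabla p(t_0,x_0)-\nabla p(t_0,y_0))$ into an inner region $|z|<\xi_0$ and an outer region $|z|\geq\xi_0$, and using in each region that $u$ obeys $\Omega(t_0,\cdot)$ together with appropriate cancellations in the Riesz kernel, produces precisely the two nonlocal terms
\[
\int_0^{\xi_0}\frac{\Omega^2(t_0,\eta)}{\eta^2}\,d\eta \qquad\text{and}\qquad \Omega(t_0,\xi_0)\int_{\xi_0}^{\infty}\frac{\Omega(t_0,\eta)}{\eta^2}\,d\eta,
\]
weighted by an absolute dimensional constant $C_d$. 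Combining these estimates with $\partial_t\Phi(t_0,x_0,y_0)\geq 0$ gives the reverse of \eqref{modevolintro} at $(t_0,\xi_0)$, contradicting Hypothesis \ref{hypoth}. Consequently the modulus is preserved on $[0,T]$; letting $\xi\to 0^+$ in the modulus inequality yields $\|\nabla u^{(B)}(t,\cdot)\|_{L^\infty}\leq \partial_\xi\Omega(t,0)$, and undoing the rescaling produces the factor $B^2$ in \eqref{mainbd1}. Finally, since the bound on $[0,T/B^2]$ prevents any blow-up of the Lipschitz norm in that interval, the continuation criterion forces $T_*>T/B^2$, and the uniform version \eqref{uniformlipbd} follows verbatim in the case $T=\infty$.
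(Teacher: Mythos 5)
Your plan is correct and follows essentially the same route as the paper's proof: rescale via the scale invariance of \eqref{maineq} and Lemma \ref{buildmod}, locate a first-breakthrough time via a Proposition-\ref{breakthrough}-type argument, reduce to a scalar touching problem and estimate the dissipation and transport via Lemma \ref{dervh}, bound the pressure increment via Lemma \ref{pressest}, and read off a contradiction with Hypothesis \ref{hypoth}; then undo the rescaling and invoke the continuation criterion to get $T_*>T/B^2$. One small precision worth noting: you need the rescaled initial data to obey $\Omega_0$ \emph{strictly} (``$<$'', not ``$\leq$''), which is exactly what Lemma \ref{buildmod} delivers; without strictness at $t=0$ the first-breakthrough time could be $0$ and the argument that $\Gamma$ attains a maximum at a positive time $t_0$ with $\Gamma(t)<0$ for $t<t_0$ would fail.
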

\begin{rmk}
Although we normalized the viscosity coefficient in \eqref{maineq} to 1, the length of the period and size of initial data were both kept arbitrary large, so the same result is true for \eqref{maineq} with arbitrary viscosity coefficient $\nu>0$. Indeed, one can use the scale invariance of \eqref{maineq} to normalize the viscosity at the expense of increasing (or decreasing) the size of initial data and/or the length of the period. This will be reflected in the constant $B$ in the above Theorem (it depends on the size of initial data, and by extension, viscosity after normalizing it). One could prove a similar result in the whole space setting, see the discussion towards the end of \S\ref{pfthm1} below.
\end{rmk}
The above Theorem and Hypothesis seem to be quite technical. Perhaps the easiest way to heuristically describe this result is as follows: the lifespan of solutions to \eqref{modevolintro} give a lower bound for the lifespan of solutions to the NSE. In particular, if one can construct a solution to \eqref{modevolintro} for all time which happens to be a modulus of continuity, then one can guarantee that regular solutions to the incompressible NSE never breakdown. Alternatively, one can use \eqref{modevolintro} to get a lower bound on the time of existence of solutions to the NSE in terms of the dimension $d$ and the size of the initial data $\|u_0\|_{W^{1,\infty}}$, see \S\ref{pfthm1} below.

The reader may be tempted to argue that the quadratic terms appearing in inequality \eqref{modevolintro} will kill any hope of constructing a global solution to this inequality. We prefer to be more conservative in this regard, for a number of reasons. One, the quadratic term is ``averaged'', in particular, an integration by parts would transform the quadratic term into a ``nonlocal'' transport:

\[
\int_0^{\xi}\frac{\Omega^2(t,\eta)}{\eta^2}\ d\eta+\Omega(t,\xi)\int_{\xi}^{\infty}\frac{\Omega(t,\eta)}{\eta^2}\ d\eta=2\int_0^{\xi}\frac{\Omega(t,\eta)\partial_\eta\Omega(t,\eta)}{\eta}d\eta+\Omega(t,\xi)\int_{\xi}^{\infty}\frac{\partial_\eta\Omega(t,\eta)}{\eta}\ d\eta.
\]
We know classical dissipation always prevails over the standard advective nonlinearity, but it is not clear to us whether the same can be said about the above nonlocal advective nonlinearity. Two, in theory, to get regularity, from bound \eqref{mainbd1} one only needs to make sure that 
\[
\int_0^T\partial_\xi\Omega(t,0)dt<\infty,
\]
for any $T\in(0,\infty)$, so that some degree of singularity formation is allowable. Of course, this will introduce various other technical difficulties, namely, how can we make sense of a ``singular'' modulus of continuity, which we do not address here. Finally, recall the condition 
\[
\lim_{\xi\rightarrow0^+}\partial^2_\xi\Omega(t,\xi)=-\infty,
\]
so that near the  boundary $\xi=0$, the viscous term is very strong. And it is exactly the behavior of $\Omega$ near $\xi=0$ that we care about the most. Unfortunately, so far we are not able to turn such remarks into a rigorous global regularity result. Nevertheless, we are able to provide a short-time existence result for large data, see \S\ref{pfthm1} below. As for the constant $C_d$, it stems from the fundamental solution to Laplace's equation in $\rd$, there does not seem to be any particular advantage when applying this argument when $d=2$ as opposed to $d\geq3$. 

Before trying to tackle the nonlinear integrals with a high degree of singularity near the origin, it is best to see what can be achieved if we consider a simpler model. Namely, instead of dealing with a nonlinear, order one operator (the term $\nabla p$), let us see what happens if we have a linear, zero order term, for instance a Riesz transform. We let $K$ be a kernel of the form
\begin{equation}\label{defkern}
K(z):=
\begin{cases}
	\displaystyle{\frac{\Phi\left(\large{z/|z|}\right)}{|z|^d}}, &z\neq0,\\
	0, &z=0,
\end{cases}
\end{equation}
where $\Phi:\mathbb{S}^{d-1}\rightarrow\mathbb{R}$ is a H\"older continuous function that satisfies the following zero average condition
\begin{equation}\label{zeroavg}
\int_{\mathbb{S}^{d-1}}\Phi(y)\ dy=0,
\end{equation}
and we consider the (nonlocal) singular integral operator $\mathcal{N}$ defined by
\begin{equation}\label{ptwiseN}
\mathcal{N}\theta(x):=P.V.\int_{\rd}K(x-z)\theta(z)\ dz,
\end{equation}
where $\theta\in L^{p}(\rd)$ with $p\in(1,\infty)$. Alternatively, one can define $\mathcal{N}$ as a Fourier multiplier with symbol
\begin{equation}\label{Khat}
\widehat K(\zeta):=\int_{\mathbb{S}^{d-1}}\left[\frac{\pi i}{2}\text{sgn}\left(\frac{\zeta}{|\zeta|}\cdot y\right)-\log\left(\frac{\zeta}{|\zeta|}\cdot y\right) \right]\Phi(y)\ dy,\quad \zeta\in\mathbb{R}^d\backslash \{0\},
\end{equation}
see Theorem 3 in \cite[Chapter~2]{Stein1970book} for more details and rigorous justification of the above. 

Since we will be working with periodic functions, we would need to make sense of periodic analogues to such operators. This can be done by recalling Theorem 3.8 (and Corollary 3.16) of \cite[Chapter~7]{SWbook1971}, which guarantees that one can ``periodize'' the operator $\mathcal{N}$ in a unique fashion via utilizing the symbol \eqref{Khat} in an obvious way as a Fourier multiplier (now over the Torus). However, there is the issue of defining $\widehat K(0)$, but this will not be of concern to us as we will only be working with periodic functions having zero averages, and so if $\theta$ is $L-$ periodic with zero average, the periodization of $\mathcal{N}$ is understood as 
\begin{equation}\label{defNp}
\mathcal{N}_p\theta(x):=C_{d,L}\sum_{\substack{m\in\mathbb{Z}^d \\ m\neq0}}\widehat K(m)\widehat \theta(m)e^{2\pi im\cdot x/L},
\end{equation}
where $C_{d,L}$ is a normalizing constant depending on the dimension $d$ and (possibly) the length of the period. The issue of periodizing such operators is addressed in more details in \cite{CZ1954}, where it was shown that one also has the pointwise definition
\begin{equation}\label{ptwiseNp}
	\mathcal{N}_p\theta(x):=C_{d,L}\sum_{m\in\mathbb{Z}^d}\int_{\tr}\left[K(z+m)-K(m)	\right]\theta(x-z)\ dz.
\end{equation}
Note that the sum \eqref{ptwiseNp} converges absolutely and uniformly on compact sets owing to the fact that $\Phi$ is assumed to be in the class $C^{0,\rho}(\mathbb{S}^{d-1})$, some $\rho\in(0,1]$ (see the proof of Lemma \ref{sioholder}, below, where one can use a similar argument to prove convergence of the series). Moreover, it was shown in \cite{CZ1954} that the operator defined by \eqref{ptwiseNp} can be realized as a Fourier multiplier whose symbol agrees with the restriction of the Fourier transform of the original kernel $K$ on $\mathbb{Z}^d$. It follows from the uniqueness result of Theorem 3.8 in \cite[Chapter~7]{SWbook1971} that $\mathcal{N}_p$ is well defined, and so we may drop the subscript ``$p$'' from \eqref{defNp} without ambiguity.
\begin{thm}\label{thm2}
	Let $d\geq2$, suppose $u_0$ is a smooth vector-field such that $u_0\in \dot{C}^{\infty}_{per}$ and let $\mathcal{N}$ be as described above. Then there exists a unique classical solution to the IVP \eqref{maineq2} on $[0,\infty)$. Moreover, we have the bound
	\begin{equation}\label{mainbd2}
	\|\nabla u(t,\cdot)\|_{L^{\infty}}\leq 2\exp\left[2\log(\lambda_0)\exp\left(C_0t\right)\right], \quad \forall t\geq0,
	\end{equation}
	where 
	$C_0\geq1$ depends on the dimension $d$, the period $L$ and the kernel $K$ while $\lambda_0$ depends only on $\|u_0\|_{W^{1,\infty}}$ but not on the dimension $d$, the period or kernel. If $\mathcal{N}\equiv 0$ (that is, we analyze the viscous Burgers equation), then we get 
	\begin{equation}\label{bdviscburgers}
	\|\nabla u(t,\cdot)\|_{L^{\infty}}\leq \lambda_0^2, \quad\forall t\geq0,
	\end{equation}
	where $\lambda_0$ depends only on $\|u_0\|_{W^{1,\infty}}$, but not on the period $L$ nor the dimension $d$.
\end{thm}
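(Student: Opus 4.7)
The plan is to adapt the Kiselev--Nazarov--Volberg--Shterenberg modulus-of-continuity machinery to the system \eqref{maineq2} in the absence of any $L^\infty$ or $L^2$ a-priori bound. I would track a time-dependent strong modulus of the form
\[
\Omega(t,\xi)=\lambda(t)\,\omega(\mu(t)\xi),
\]
where $\omega$ is a fixed \emph{bounded} strong modulus normalized by $\omega'(0^+)=1$ and $\lambda(t),\mu(t)$ are positive parameters to be determined. Once I show that such an $\Omega$ is propagated along the flow, i.e.\ $|u(t,x)-u(t,y)|\leq\Omega(t,|x-y|)$ for all $x,y$ and all $t$ in the lifespan, one gets simultaneously
\[
\|\nabla u(t,\cdot)\|_{L^\infty}\leq \partial_\xi\Omega(t,0^+)=\lambda(t)\mu(t)\qquad\text{and}\qquad \|u(t,\cdot)\|_{L^\infty}\leq\Omega(t,\infty)=\lambda(t)\omega(\infty),
\]
the second bound following from the zero-mean periodicity of $u(t,\cdot)$ together with the boundedness of $\omega$. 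This second estimate is crucial here because \eqref{maineq2} has no conserved quantity to control the zero-order singular integral, and obtaining it without sacrificing the first bound is precisely why we must work with \emph{bounded} moduli rather than the unbounded ones used in \cite{KNV2007,KNS2008}. Local well-posedness on some maximal interval $[0,T_*)$ is standard, and a finite Lipschitz a-priori bound will force $T_*=\infty$.

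The core is the usual breakthrough argument. Choose $\lambda_0,\mu_0$ so that $u_0$ strictly obeys $\Omega(0,\cdot)$; by Lemma \ref{buildmod} this can be arranged with $\lambda_0$ comparable to $\|u_0\|_{W^{1,\infty}}$ and $\mu_0$ depending on structural parameters of $\omega$. If the modulus is first violated at some $t_0\in(0,T_*)$, there exist $x\neq y$ with $|u(t_0,x)-u(t_0,y)|=\Omega(t_0,\xi)$, where $\xi:=|x-y|$. Setting $e=(x-y)/\xi$, a standard rotational reduction allows one to assume $u(t_0,x)-u(t_0,y)=\Omega(t_0,\xi)\,e$, so the time derivative of $(u(t,x)-u(t,y))\cdot e$ at $t_0$ must be at least $\partial_t\Omega(t_0,\xi)$. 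Using \eqref{maineq2} I would majorize this derivative by the sum of three contributions: (i) the dissipative contribution, bounded above by $4\,\partial_\xi^2\Omega(t_0,\xi)<0$ via the standard KNV-type lemma applied componentwise; (ii) the Burgers contribution from $(u\cdot\nabla)u$, which thanks to the alignment of $u(t_0,x)-u(t_0,y)$ with $e$ collapses to the one-dimensional expression $\Omega(t_0,\xi)\,\partial_\xi\Omega(t_0,\xi)$; and (iii) the zero-order contribution from $\mathcal{N}u$, which by the pointwise continuity estimates developed in \S\ref{seccontest} is controlled by a linear functional of $\Omega(t_0,\cdot)$ and $\|u(t_0,\cdot)\|_{L^\infty}$, schematically of the form
\[
C_K\!\left[\int_0^{\xi}\frac{\Omega(t_0,\eta)}{\eta}\,d\eta+\Omega(t_0,\xi)\log\!\frac{L}{\xi}+\|u(t_0,\cdot)\|_{L^\infty}\right].
\]

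The third step is the main obstacle, and choosing $\omega$ correctly is the whole art: I would use a piecewise-defined $\omega$ that is linear near the origin, strictly concave with $-\omega''$ large enough on an intermediate interval to absorb both the Burgers advection $\omega\omega'$ and the logarithmic tail from the singular integral, and eventually constant. Substituting the self-similar ansatz and the companion bound $\|u(t_0,\cdot)\|_{L^\infty}\leq\lambda(t_0)\omega(\infty)$ reduces the breakthrough inequality to a coupled system of the form
\[
\dot\lambda\leq C_0\lambda\log\lambda,\qquad \dot\mu\leq C_0\mu\log\lambda,
\]
with $C_0$ depending on $d$, $L$ and $K$ only, and with $\lambda(0),\mu(0)$ comparable to $\|u_0\|_{W^{1,\infty}}$. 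Integrating yields $\lambda(t)\mu(t)\leq 2\exp[2\log(\lambda_0)\exp(C_0t)]$, which is exactly \eqref{mainbd2}. The logarithmic loss inherent in (iii) is what produces the outer exponential; in the pure viscous Burgers case $\mathcal{N}\equiv 0$ step (iii) disappears entirely, so one may take $\mu(t)\equiv\lambda(t)\equiv\lambda_0$ and the remaining breakthrough inequality $-4\omega''\geq\lambda_0^2\,\omega\omega'$ is a purely algebraic condition on $\omega$, satisfied by any sufficiently concave bounded profile, yielding the time-independent bound \eqref{bdviscburgers}. Uniqueness is standard for classical semilinear-parabolic solutions once the Lipschitz a-priori bound is in hand.
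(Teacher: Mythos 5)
Your overall strategy matches the paper's: dynamically rescale a bounded strong modulus as $\Omega(t,\xi)=\lambda(t)\omega(\mu(t)\xi)$, run the breakthrough argument, identify the three contributions (dissipation, Burgers advection, zero-order singular integral), note that the singular integral introduces a logarithmic loss that forces $\lambda'\lesssim\lambda\log\lambda$, and integrate to get the double exponential; and in the pure Burgers case the modulus can be taken time-independent, giving $\lambda_0^2$. The paper even makes the choice $\mu=\lambda$ from the outset (so that Burgers advection and dissipation rescale identically, the ``right'' scaling for Burgers), which your coupled system $\dot\lambda\lesssim\lambda\log\lambda$, $\dot\mu\lesssim\mu\log\lambda$ with $\lambda(0)=\mu(0)$ collapses to anyway.

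However, there is a genuine misdiagnosis of the mechanism that controls the zero-order term, and it matters. You claim the companion bound $\|u(t,\cdot)\|_{L^\infty}\leq\lambda(t)\omega(\infty)$ is ``crucial'' for controlling $\mathcal{N}u$, and your schematic estimate for $|\mathcal{N}u(x_0)-\mathcal{N}u(y_0)|$ contains an additive $\|u\|_{L^\infty}$ term. The paper's proof does not invoke any $L^\infty$ bound on $u$ at all. Lemma~\ref{sioholder} gives an estimate purely in terms of the modulus, namely $C_{K,d}\bigl[\int_0^{3\xi}\omega(\eta)\eta^{-1}\,d\eta+\xi^\rho\int_{3\xi}^\infty\omega(\eta)\eta^{-1-\rho}\,d\eta\bigr]$; the tail is automatically $O(\omega(\infty))$ once $\omega$ is bounded, and the real danger is the near-field integral $\int_0^{3\xi}\omega(\eta)\eta^{-1}\,d\eta\sim\omega(\infty)\log\xi$, which is unbounded as $\xi\to\infty$. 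An $L^\infty$ bound on $u$ cannot remove this: splitting the near-field at any radius $R$ and using $\|u\|_{L^\infty}$ for $|z-x|\in(R,2\xi)$ still produces $\|u\|_{L^\infty}\log(\xi/R)$, because the kernel $\eta^{-1}$ is not integrable at infinity. What the paper uses instead is periodicity to restrict the breakthrough distance to $|x_0-y_0|\in(0,2L\sqrt d]$, whence $\sigma=\lambda|x_0-y_0|\leq2L\lambda\sqrt d$ and the near-field integral is $\lesssim\log(L\lambda)\lesssim\log\lambda$. This is exactly the origin of the $\log\lambda$ in $\lambda'=C_0\lambda\log\lambda$, and it is why the paper's remark states that the whole-space version is an open obstacle that an $L^\infty$ estimate \emph{could potentially} fix but does not come for free. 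As written, if your argument leans on $\|u\|_{L^\infty}$ in place of the periodicity restriction of the breakthrough distance, it does not close. Two smaller slips: the schematic $\mathcal{N}u$ estimate should not have a separate $\|u\|_{L^\infty}$ term, and the Burgers-case breakthrough inequality with $\mu=\lambda=\lambda_0$ reduces to $-4\omega''\geq\omega\omega'$, not $-4\omega''\geq\lambda_0^2\omega\omega'$ (the $\lambda_0$ dependence cancels exactly because $\mu=\lambda$ respects the Burgers scaling).
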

\begin{rmk}
We point out that periodicity is used in an essential way in the proof of Theorem \ref{thm2} and there seems to be a significant technical obstacle in the way of obtaining a similar result in the whole space scenario, as will be demonstrated in \S\ref{pfthm2}. Such a difficulty could potentially be bypassed if we either have an $L^{\infty}$ estimate on the solution, or explicit rates of decay. In particular, bound \eqref{bdviscburgers} is true in the whole space. Our purpose here is merely a proof of concept (obtain  regularity without relying on conserved quantities as energy), with our aim being to keep the presentation as simple as possible, so we do not pursue this direction.
\end{rmk}
With regard to \eqref{classicalddintro}  and \eqref{maineq3intro}, we think of them as being posed either in the periodic setting (where the periods of $b$ and $u$ are the same) or in the whole space with sufficient decay at infinity. The first result we prove is with regard to the drift-diffusion problem \eqref{classicalddintro}.
\begin{thm}\label{thm3}
	Given $\alpha\in(0,1]$, let $\beta\in[0,1)$ be chosen such that $\beta+2\alpha-1>0$ and define
\begin{equation}\label{defgamma}
\gamma:=\frac{1}{\beta+2\alpha-1}.
\end{equation}
Assume that $b:[0,T]\times\rd\rightarrow\rd$ is a given vector field (not necessarily divergence-free) that satisfies
\begin{equation}\label{condb}
\left[b(t,\cdot)\right]_{C^{0,\beta}}:=\sup_{x\neq y}\frac{|b(t,x)-b(t,y)|}{|x-y|^{\beta}}\leq g(t),\quad \forall\  t\in[0,T],
\end{equation} 
	where $g\geq 1$ is a non-decreasing function defined on $[0,T]$. If $\beta=0$, we assume that $\|b(t,\cdot)\|_{L^{\infty}}\leq  g(t)$. Let $u$ be a solution to
\begin{equation}\label{classicaldd}
\begin{cases}
	\partial_t u(t,x)+(-\Delta)^{\alpha} u(t,x)=(b\cdot\nabla)u(t,x), \\
	u(0,x)=u_0(x),
\end{cases}
\end{equation}
with regularity 
\begin{equation}\label{regu}
	u\in C_t^1C_x^2((0,T]\times\rd)\cap C([0,T];W^{1,\infty}(\rd)),
\end{equation}
Assume further that one of the following hypotheses is true:
	\begin{enumerate}[label=\Alph*)]
	\item $u(t,\cdot)$ is periodic in space with arbitrary period $L>0$ or
	\item $u(t,\cdot)\in W^{2,\infty}(\rd)$ for every $t\in[0,T]$ and the gradient vanishes at infinity, i.e.
	\[
	\lim_{|x|\rightarrow\infty}|\nabla u(t,x)|=0,\quad \forall t\in[0,T].
	\] 
	\end{enumerate}
It follows that there exists a constant $C_{0,\alpha}\geq1$ depending only on $\|u_0\|_{W^{1,\infty}}$ and $\alpha$ such that the following estimate is true 
\begin{equation}\label{supcritdd}
\|\nabla u(t,\cdot)\|_{L^{\infty}}\leq C_{0,\alpha} g^{\gamma}(t),\quad \forall t\in[0,T].
\end{equation}
\end{thm}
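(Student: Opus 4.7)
The plan is to propagate a carefully designed Lipschitz modulus of continuity of the form $\Omega(t,\xi) = \lambda(t)\,\omega(\mu(t)\xi)$ along the flow of \eqref{classicaldd}, in the spirit of Kiselev, Nazarov, Volberg and Shterenberg. I would fix $\omega$ as a strong modulus in the sense of Definition \ref{defmod} (so $\omega'(0)$ is finite and $\omega''(0^+)=-\infty$), with slow growth at infinity, and tune the scalings $\lambda(t),\mu(t)>0$ to the drift bound $g(t)$. Since $\partial_\xi\Omega(t,0)=\lambda(t)\mu(t)\omega'(0)$, extracting \eqref{supcritdd} will reduce to arranging $\lambda(t)\mu(t)\lesssim g^{\gamma}(t)$ while keeping $u(t,\cdot)$ strictly obedient to $\Omega(t,\cdot)$ on $[0,T]$. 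Initial compatibility $|u_0(x)-u_0(y)|<\Omega(0,|x-y|)$ will be secured by choosing $\lambda(0),\mu(0)$ large in terms of $\|u_0\|_{W^{1,\infty}}$; this is where the constant $C_{0,\alpha}$ will acquire its dependence on the initial data.

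The argument itself is a contradiction at a first breakdown time $t_0\in(0,T]$ with touching pair $(x_0,y_0)$, $\xi_0:=|x_0-y_0|$. Such a pair will exist by periodicity under Hypothesis (A) or by the decay $|\nabla u(t,x)|\to 0$ at infinity under Hypothesis (B); the degenerate case $\xi_0=0$ is handled by passing to $\partial_\xi\Omega(t,0)$. With $\eta := (u(t_0,x_0)-u(t_0,y_0))/|u(t_0,x_0)-u(t_0,y_0)|$ and $e := (x_0-y_0)/\xi_0$, the criticality relations will give $(\nabla u(t_0,x_0))^{\top}\eta = (\nabla u(t_0,y_0))^{\top}\eta = \partial_\xi\Omega(t_0,\xi_0)\,e$. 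Substituting the PDE into $\partial_t|u(t_0,x_0)-u(t_0,y_0)|$, the drift contribution will collapse to $\partial_\xi\Omega(t_0,\xi_0)\,[b(t_0,x_0)-b(t_0,y_0)]\cdot e$, bounded by $g(t_0)\xi_0^{\beta}\partial_\xi\Omega(t_0,\xi_0)$ via \eqref{condb}; the fractional Laplacian will contribute a dissipative $-\mathcal{D}_\alpha[\Omega](t_0,\xi_0)$ of KNV type, which is precisely what the continuity estimates of \S\ref{seccontest} are meant to supply.

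Non-breakdown thus reduces to the pointwise inequality
$$
\partial_t\Omega(t,\xi) \;\geq\; g(t)\,\xi^{\beta}\,\partial_\xi\Omega(t,\xi) \;-\; \mathcal{D}_\alpha[\Omega](t,\xi),
\qquad (t,\xi)\in(0,T]\times(0,\infty).
$$
Inserting the ansatz and rescaling $s = \mu(t)\xi$, the drift term will scale like $g(t)\mu(t)^{-\beta}(\lambda\mu)\,s^{\beta}\omega'(s)$ while the dissipation, by $2\alpha$-homogeneity of $(-\Delta)^{\alpha}$, will scale like $\lambda\mu^{2\alpha}$ times a fixed positive functional of $\omega$ at $s$. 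Balancing drift against dissipation uniformly in $s$ forces $\mu(t)^{2\alpha-1+\beta}\sim g(t)$, i.e.\ $\mu(t)\sim g^{\gamma}(t)$; $\lambda(t)$ can then be kept essentially constant, needing only slow growth to preserve strict initial containment, which is harmless because $g$ is non-decreasing. The resulting Lipschitz bound is $\partial_\xi\Omega(t,0)\sim\lambda(t)\mu(t)\sim g^{\gamma}(t)$, which is precisely \eqref{supcritdd}.

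The principal obstacle I anticipate is the dissipation lower bound itself: proving $\mathcal{D}_\alpha[\Omega](t,\xi)\gtrsim \lambda\mu^{2\alpha} F_\alpha(\omega,s)$ with $F_\alpha(\omega,s)>0$ large enough to dominate $s^{\beta}\omega'(s)$ uniformly for $s>0$. Near $s=0$ this is delicate because one must exploit the divergent $\omega''(0^+)$ against a potentially singular gradient from the drift, while for large $s$ the slow growth of $\omega$ is needed to control the nonlocal tails from $(-\Delta)^{\alpha}$, and the precise shape of $\omega$ must be matched to the range $\alpha\in(0,1]$. The non-divergence-free nature of $b$ will play no role, since only its H\"older seminorm enters; Hypotheses (A)/(B) will be used only to produce the breakdown pair, and nowhere in the main inequality.
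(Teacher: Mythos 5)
Your blueprint is, in its skeleton, the same as the paper's proof: the time-rescaled ansatz $\Omega(t,\xi)=\lambda(t)\omega(\mu(t)\xi)$, the breakthrough-time argument (Propositions \ref{breakthrough} and \ref{breakthroughwholespace}), the collapse of the drift at the touching pair to $g(t)\xi^\beta\partial_\xi\Omega$, the substitution $\sigma=\mu\xi$ together with the scaling balance $\mu^{2\alpha+\beta-1}\sim g$, and the conclusion $\partial_\xi\Omega(t,0)\sim g^\gamma(t)$ with $\lambda$ constant (there called $B$, set large via Lemma \ref{buildmod}). The dissipation step you flag as the main obstacle is resolved in the paper by the Taylor estimate $D_\alpha[\omega](\sigma)\leq C_\alpha\,\sigma^{2-2\alpha}\omega''(\sigma)$ (the tail integral in $D_\alpha$ is favorable and simply discarded) together with an explicit piecewise $\omega$: $\omega(\sigma)=\sigma-\sigma^{2-r}$ near $0$, so $\omega''\sim -\sigma^{-r}$ with $r>2-2\alpha-\beta$, and beyond a cutoff $\delta$ a solution of the ODE $C_\alpha\sigma^{2-2\alpha}\omega''+\sigma^\beta\omega'=0$.

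One part of your plan cannot be carried out as stated. You want $\omega$ with slow growth at infinity while keeping $\lambda$ essentially constant and balancing drift against dissipation uniformly in $s$. But with the dissipation estimate above, uniform balance past the cutoff demands $C_\alpha\sigma^{2-2\alpha}\omega''+\sigma^\beta\omega'\leq 0$, which integrates to $\omega'(\sigma)\lesssim\exp\bigl(-c\,\sigma^{2\alpha+\beta-1}\bigr)$; since $2\alpha+\beta-1>0$, this forces $\omega$ to be bounded (when $\alpha=1$ the same conclusion follows even without the local-dissipation estimate, since $D_1[\omega]=4\omega''$). An unbounded $\omega$ would instead require $\lambda'(t)>0$, which is exactly the source of the exponential loss in Theorem \ref{thm4}. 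With $\omega$ bounded, hypothesis (B) is not purely a device to produce the touching pair: Proposition \ref{breakthroughwholespace} also needs $\Omega(t,\xi)\gtrsim \|u(t,\cdot)\|_{L^\infty}$ for large $\xi$, and the paper supplies that from the classical maximum principle $\|u(t,\cdot)\|_{L^\infty}\leq\|u_0\|_{L^\infty}$ for \eqref{classicaldd} (plus choosing $B$ large enough in terms of $\|u_0\|_{L^\infty}$). So in case (B) you do need the $L^\infty$ a priori bound on the solution, which is available here precisely because no pressure term is present; this is worth making explicit in your write-up.
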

Here, we are assuming the existence of a classical solution, and obtaining bounds based on that. In other words, we are obtaining a-priori estimates, and unlike \cite{CW2008b, Silvestre2012a, Silvestre2012b}, we do not study any smoothing effect of the operator $\partial_t+(-\Delta)^{\alpha}-b\cdot\nabla$. Such a-priori estimates could be used to show regularity of weak solutions to \eqref{classicalddintro}, or obtain a regularity criterion for a wide class of (non-linear) active scalars, such as the dissipative SQG and related models. We leave it up to the reader to decide how they would like to make use of the result. To say the least, it reproduces the previously mentioned regularity criteria for the supercritical SQG equation; we refer the reader back to \S\ref{secimplication} for a discussion on how our results compare to the previous ones. 

The condition $\beta+2\alpha-1\geq0$ is sharp, even if we impose a divergence-free constraint on the drift. It was shown in \cite{SVZ2013} that when the dimension $d=2$, $\alpha\in(0,1/2)$ and $\beta\in(0,1-2\alpha)$, one can construct a time-independent drift $b\in C_x^{0,\beta}$ that is divergence free, and a solution that violates any modulus of continuity in finite time. For $\alpha\in(1/2,1]$, the condition $\beta+2\alpha-1\geq0$ is satisfied even when $\beta=0$. Further, it was also shown in \cite{SVZ2013} that in two dimensions, if the drift velocity is divergence-free and locally integrable in space, then one can construct a solution to \eqref{classicaldd} that satisfies a logarithmic modulus of continuity, a supercritical assumption. Here, we show that a supercritical H\"older assumption on the drift velocity gives us estimates on the Lipschitz constant, and in particular, control of a critical quantity of the solution. For instance when $\alpha=1$, we need only assume that $g$ is non-decreasing and
\[
\int_0^Tg^{1/(1+\beta)}(s)ds<\infty.
\]

We find such an estimate interesting due to the following. As a consequence of the Foias-Guillop\'e-Temam partial regularity result \cite{FGT1981} (see also \cite{Constantin2001, Constantin2014, Tao2013} for when $\beta=0$) in the three-dimensional periodic NSE \eqref{maineq3intro}, one has 
\begin{align*}
&\int_0^T[u(t,\cdot)]_{C_x^{0,\beta}}^{2/3}dt<\infty,\quad \forall\beta\in(0,1/2],\\
&\int_0^T\|u(t,\cdot)\|_{L^{\infty}}dt<\infty.
\end{align*}
Thus, choosing any $\beta=1/2$ tells us that 
\[
\int_0^T[u(t,\cdot)]_{C_x^{0,\beta}}^{1/(1+\beta)}dt=\int_0^T[u(t,\cdot)]_{C_x^{0,1/2}}^{2/3}dt<\infty,
\]
and so proving estimate \eqref{supcritdd} for the NSE \eqref{maineq3intro} would be a significant step towards resolving the global regularity problem (one still has to take care of the non-decreasing assumption we have on $g$). We emphasize again that we do not use the classical maximum principle that is readily available for \eqref{classicalddintro} to obtain estimate \eqref{supcritdd} in the periodic setting, only in the whole space do we rely on it. That could be relaxed if one has explicit rates on the decay of the solution at spatial infinity, though we do not pursue that direction here.

Unfortunately, as the reader will soon realize in \S\ref{pfthm4a} below, it is just not possible to obtain an estimate analogous to \eqref{supcritdd} for solutions to \eqref{maineq3intro}, even with $\alpha=1$, when working with moduli of continuity of the form $\Omega(t,\xi):=\lambda(t)\omega(\mu(t)\xi)$. Whether one can achieve this bound by constructing moduli of continuity of different form is currently under investigation by the author, and any interesting results will be reported in a separate manuscript. We refer the reader to \S\ref{secmotivation}, below for a heuristic explanation of the difficulty encountered when dealing with the nonlocal pressure term, as well as why one sees double-exponential growth in Theorem \ref{thm2} above. 

Before stating our next Theorem, let us reiterate the main drawback of the non-decreasing assumption on the bounding function $g$. It would lead to 
\[
\sup_{s\in[0,t]}[b(s,\cdot)]_{C_x^{0,\beta}}\leq g(t),
\]
which is a slightly stronger assumption than simply requiring $b\in L_t^pC_x^{0,\beta}([0,T]\times\rd)$. However, \emph{we emphasize that we do not assume $g\in L^{\infty}$}. In particular, it could be of the form $(T-t)^{-\kappa}$, for appropriately chosen $\kappa>0$. Thus, in our analysis, it seems more natural to replace the standard $L_t^pC_x^{0,\beta}$ (semi)-norm with 
\begin{equation}\label{altcondbform}
\left(\int_0^T\sup_{s\in[0,t]}[b(s,\cdot)]_{C_x^{0,\beta}}^pdt\right)^{1/p}<\infty,
\end{equation}
which scales exactly like the standard $L_t^pC_x^{0,\beta}$ (semi)-norm.
\begin{thm}\label{thm4}
	Given $\alpha\in(0,1]$, let $\beta\in(0,1)$ be chosen such that $\beta+2\alpha-1>0$ and define
\begin{equation}\label{defgammathm4}
\gamma:=\frac{1}{\beta+2\alpha-1}.
\end{equation}
Assume that $b:[0,T]\times\rd\rightarrow\rd$ is a given, divergence-free vector field that satisfies
\begin{equation}\label{condb}
\left[b(t,\cdot)\right]_{C^{0,\beta}}:=\sup_{x\neq y}\frac{|b(t,x)-b(t,y)|}{|x-y|^{\beta}}\leq g(t),\quad a.e.\  t\in[0,T],
\end{equation} 
	where $g\geq 1$ is a non-decreasing function defined on $[0,T]$. Let $(u,p)$ be a solution to
\begin{equation}\label{maineq3thm}
\begin{cases}
	\partial_t u(t,x)+(-\Delta)^{\alpha} u(t,x)=(b\cdot\nabla)u(t,x)+\nabla p(t,x), \\
	\nabla\cdot u(t,x)=0, \\
	u(0,x)=u_0(x),
\end{cases}
\end{equation}
with regularity 
\begin{equation}\label{regu}
	u\in C_t^1C_x^2((0,T]\times\rd)\cap C([0,T];W^{1,\infty}(\rd)),
\end{equation}
Assume further that one of the following hypotheses is true:
	\begin{enumerate}[label=\Alph*)]
	\item $b(t,\cdot)$ and $u(t,\cdot)$ are both periodic in space with arbitrary period $L>0$ or
	\item $b(t,\cdot)\in L^{r}(\rd)$ some $r\in(1,\infty)$, the velocity vector field $u(t,\cdot)\in W^{2,\infty}(\rd)$ for every $t\in[0,T]$, and its gradient vanishes at infinity, i.e.
	\[
	\lim_{|x|\rightarrow\infty}|\nabla u(t,x)|=0,\quad \forall t\in[0,T].
	\] 
	\end{enumerate}
It follows that there exists two constants $B\geq1$ and $C_{d,\beta,\alpha}\geq1$ where $B$ depends only on the $\|u_0\|_{W^{1,\infty}}$ norm, while $C_{d,\alpha,\beta}$ depends only on the dimension $d$, $\alpha$ and $\beta$, with $C_{d,\alpha,\beta}\rightarrow\infty$ as $\beta\rightarrow 0$ or $\alpha\rightarrow 0$, such that the following estimates are true 
\begin{align}
&\|\nabla u(t,\cdot)\|_{L^{\infty}}\leq B g^{\gamma}(t)\exp\left(C_{d,\alpha,\beta}B^{1-\beta}\int_0^tg^{2\alpha\gamma}(s)ds\right),\quad \forall t\in[0,T],\label{crtibdnsethm}\\
&\|\nabla u(t,\cdot)\|_{L^{\infty}}\leq \lambda(t)\log(\lambda(t)),\quad \forall t\in[0,T]\label{supercritbdnsethm},
\end{align}
where 
\[
\log(\lambda(t)):= 
\begin{cases}
g^{\gamma}(t)\exp\left(C_{d,\alpha,\beta}B^{1-\beta}\int_0^tg^{1-\gamma\beta}(s)ds\right), &\alpha\in[1/2,1],\\
g^{\gamma(1-\beta)}(t)\exp\left(C_{d,\alpha,\beta}B^{1-\beta}\int_0^tg(s)ds\right),&\alpha\in(0,1/2),
\end{cases}
\]
\end{thm}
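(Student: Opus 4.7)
Following the Kiselev--Nazarov--Volberg--Shterenberg framework, I would propagate an a priori bound for $|u(t,x)-u(t,y)|$ in terms of a time-dependent modulus $\Omega(t,\xi) := \lambda(t)\,\omega(\mu(t)\xi)$ of the product form \eqref{transtmodintro}, where $\omega$ is a fixed strong modulus of continuity (built as in Lemma \ref{buildmod}) and $\lambda, \mu \geq 1$ are nondecreasing scalar functions to be chosen. The constant $B$ is fixed once and for all so that $|u_0(x)-u_0(y)|<\Omega(0,|x-y|)$ for every $x\neq y$, which can be done in terms of $\|u_0\|_{W^{1,\infty}}$ alone because $\omega$ is strong. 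By an open-first-violation argument, it suffices to rule out any interior breakthrough. Assuming one occurs at $(t_0,x_0,y_0)$ with $\xi_0=|x_0-y_0|$, and letting $e$ be the unit vector along $u(t_0,y_0)-u(t_0,x_0)$, the system \eqref{maineq3thm} yields the scalar inequality
\[
\partial_t\Omega(t_0,\xi_0) \;\leq\; e\cdot\Bigl[(b\cdot\nabla)u+\nabla p-(-\Delta)^\alpha u\Bigr](t_0,y_0)-e\cdot\Bigl[(b\cdot\nabla)u+\nabla p-(-\Delta)^\alpha u\Bigr](t_0,x_0),
\]
which must fail if the modulus is constructed correctly.

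I would estimate the three right-hand contributions using the continuity machinery assembled in \S\ref{seccontest}. The dissipation $-(-\Delta)^\alpha u$ contributes a strictly negative quantity dominating a multiple of $\lambda\mu^{2\alpha}\bigl(-\omega''(\mu\xi_0)+\text{nonlocal tail}\bigr)$, providing the dissipative surplus to absorb the rest. Using only $[b(t_0,\cdot)]_{C^{0,\beta}}\leq g(t_0)$, the drift contributes at most $g(t_0)\xi_0^{\beta}\partial_\xi\Omega(t_0,\xi_0) = g(t_0)\xi_0^{\beta}\lambda\mu\,\omega'(\mu\xi_0)$; no use of $\nabla\cdot b=0$ is needed here. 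The pressure is the delicate term: taking the divergence of \eqref{maineq3thm} and using $\nabla\cdot b=0$ gives $-\Delta p=\nabla b:(\nabla u)^T$, so $\nabla p$ is a Riesz-transform bilinear form in $\nabla b$ and $\nabla u$. Here the Silvestre--Constantin--Isett observation yields an increment bound of the schematic form
\[
|\nabla p(t_0,x_0)-\nabla p(t_0,y_0)| \;\lesssim\; [b(t_0,\cdot)]_{C^{0,\beta}}\,\|\nabla u(t_0,\cdot)\|_{L^\infty}^{1-\beta}\,M_\beta(\Omega,\xi_0),
\]
trading a factor $\|\nabla u\|_{L^\infty}^{\beta}$ for $[b]_{C^{0,\beta}}$. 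The leftover $\|\nabla u\|_{L^\infty}^{1-\beta}\sim(\lambda\mu)^{1-\beta}$ is exactly what prevents the argument from closing without an iteration.

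Combining these three bounds produces a scalar inequality on $\lambda,\mu$ that must be violated, which I would turn into ODEs by matching scales. Demanding that the drift be absorbed by the dissipation at the critical spatial scale forces $\mu(t)\sim g^{\gamma}(t)$, explaining the leading $g^\gamma$ factor; the residual pressure contribution then gives a linear Gronwall for $\lambda$ with rate proportional to $B^{1-\beta} g^{2\alpha\gamma}=B^{1-\beta}g^{p^*}$, producing \eqref{crtibdnsethm}. For the supercritical bound \eqref{supercritbdnsethm}, I would not balance fully: allowing a $\lambda\log\lambda$ excess on the left reduces the effective pressure cost to $g^{1-\gamma\beta}$ when $\alpha\in[1/2,1]$, or to $g$ when $\alpha\in(0,1/2)$. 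The split between the two $\alpha$-regimes reflects whether the drift or the pure dissipation sets the matching scale at sub-critical spatial frequencies.

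The main obstacle throughout is the pressure estimate. Unlike the bare drift-diffusion problem of Theorem \ref{thm3}, the identity $-\Delta p=\nabla b:(\nabla u)^T$ forces any pointwise breakthrough bound on $\nabla p$ to carry $\nabla u$ on its right-hand side, and the Hölder trick can only trade $\beta$ of that gradient away for $[b]_{C^{0,\beta}}$. With the product-form ansatz \eqref{transtmodintro}, the residual $(\lambda\mu)^{1-\beta}$ makes a Gronwall loop unavoidable, which is precisely why the critical estimate must be stated with an exponential rather than a polynomial prefactor, and why the supercritical bound is only $\lambda\log\lambda$ rather than a pure power.
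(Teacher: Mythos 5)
Your overall architecture matches the paper's: propagate a time‐dependent modulus $\Omega(t,\xi)=\lambda(t)\omega(\mu(t)\xi)$, rule out a first breakthrough, absorb the drift into the dissipation at scale $\mu\sim g^{\gamma}$, and close with a linear Gronwall inequality for $\lambda$ (critical case) or a $\lambda\log\lambda$ ODE (supercritical). The treatment of the drift, dissipation, and the scaling $\mu\sim g^{\gamma}$ are all consistent with the paper.

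However, the pressure estimate --- the one genuinely new ingredient --- is where the proposal goes wrong, and in a way that matters. The paper's Lemma \ref{pressest} gives a purely modulus-level bound: if $b$ has modulus $\omega_b$ and $u$ has modulus $\omega_u$ (both divergence-free), then $\nabla p$ has modulus
\[
C_d\Bigl[\int_0^\xi\frac{\omega_b\omega_u}{\eta^2}\,d\eta+\omega_b(\xi)\int_\xi^\infty\frac{\omega_u}{\eta^2}\,d\eta+\omega_u(\xi)\int_\xi^\infty\frac{\omega_b}{\eta^2}\,d\eta\Bigr],
\]
obtained through Silvestre's incompressibility identity $\partial_i\partial_j(b_iu_j)=\partial_i\partial_j[(b_i-b_i(x))(u_j-u_j(x))]$, which makes the kernel act on \emph{products of increments} rather than on $\nabla b$ and $\nabla u$. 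Plugging $\omega_b=g\xi^\beta$ and $\omega_u=\Omega=\lambda\omega(\mu\cdot)$ and rescaling $\sigma=\mu\xi$, the pressure contribution carries the factor $g\,\lambda\,\mu^{1-\beta}$ --- \emph{linear} in $\lambda$, $(1-\beta)$-power in $\mu$. Your schematic bound $[b]_{C^{0,\beta}}\|\nabla u\|_{L^\infty}^{1-\beta}M_\beta(\Omega,\xi_0)$ instead produces $(\lambda\mu)^{1-\beta}$, i.e., $\lambda^{1-\beta}\mu^{1-\beta}$. This is not a cosmetic difference: $\lambda^{1-\beta}$ in place of $\lambda$ would give $\lambda'\gtrsim g^{p^*}\lambda^{1-\beta}$, a sublinear ODE yielding only polynomial growth, which contradicts your own stated conclusion that a ``linear Gronwall'' producing an exponential factor is forced. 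In other words, the scaling in your pressure schematic is inconsistent with the rest of your argument, and the correct form that saves it is precisely the bilinear-increment estimate of Lemma \ref{pressest}, which you have not actually justified. Relatedly, the ``trading $\|\nabla u\|^{\beta}$ for $[b]_{C^{0,\beta}}$'' heuristic does not match either dimensionally or in its $\lambda$-power: there is no interpolation at play here; the estimate comes directly from the modulus of $u$, not from $\|\nabla u\|_{L^\infty}$.

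A smaller point: in the supercritical regime the paper changes the \emph{spatial} scale to $\mu=\log\lambda$ (for $\alpha\in[1/2,1]$) or $\mu=\log^{1/(1-\beta)}\lambda$ (for $\alpha\in(0,1/2)$) subject to $\mu\gtrsim g^{\gamma}$, and it is precisely the $\mu^{-\beta}$ factor in the pressure coefficient that trims $g^{p^*}$ down to $g^{1-\gamma\beta}$ or $g$. Your ``allowing a $\lambda\log\lambda$ excess on the left'' gestures at this but does not identify the mechanism (reparametrizing $\mu$), so as written that part is not yet a proof. You should consult Lemma \ref{pressest} and the derivation of \eqref{modevolnse} to make the pressure contribution precise before attempting to close the ODEs.
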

\begin{rmk}
The difference between \eqref{crtibdnsethm} and the corresponding bound \eqref{supcritdd} is the presence of the exponential term. Notice that $p^*=2\alpha\gamma$, and so in order to guarantee finiteness of the exponential term in \eqref{crtibdnsethm}, one always has to make a critical assumption on drift velocity. Thus, when $\alpha\in(1/2,1]$, the requirement on the drift is always $g\in L^{p^*}(0,T)$ (critical assumption), while for $\alpha\in(0,1/2)$, one has to require $g\in L^{\gamma}(0,T)$ (sub-critical assumption). As for the reason why we need $\beta>0$ even when $\alpha\in(1/2,1]$, it is required because unlike \eqref{classicaldd}, here we need to locally estimate Riesz transforms, and it is known that $L^{\infty}$ is a bad space for those operators \cite{Stein1970book}.
\end{rmk}
The regularity criterion for the classical NSE (system \eqref{maineq3thm} with $\alpha=1$) that we get as a consequence applying the above Theorem with $b=u$ together with the weak Beale-Kato-Majda criterion is not new. It was obtained by Silvestre and Vicol in \cite{SV2012}, where they show that $u\in L_t^{\infty}C_x^{0,\kappa}$ for any $\kappa\in(0,1)$ privided $u_0\in C_x^{0,\kappa}$ and $b$ is at the critical level. However, estimates \eqref{crtibdnsethm} and \eqref{supercritbdnsethm} \emph{are new}, even in the classical $\alpha=1$ case. It was explicitly mentioned in \cite[Remark~3.3]{SV2012} that their approach only measures the propagation of H\"older regularity, and that it fails when trying to measure the propagation of Lipschitz regularity. Moreover, the regularity criterion we get as a consequence of \eqref{crtibdnsethm} when $\alpha\in(0,1)$, is to our knowledge, the first of its kind. 

Let us conclude this section by briefly comparing our results to the ones obtained in \cite{SV2012} in the classical diffusion case. In contrast to \cite{SV2012} where the authors measured H\"older continuity via local integral (average) characterization and utilized $L^1$ based Morrey-Campanato spaces, we perform our analysis in a purely local pointwise medium. This puts our approach at a slight disadvantage to theirs in the sense that they were able to cover a wider range of critical norms. In particular they allow for $\beta\in(-1,1]$, where for $\beta\in(-1,0]$, condition \eqref{condb} is replaced with the requirement
\begin{equation}\label{condgsv}
\sup_{x\in\rd}\sup_{0<r<1}r^{-\beta}\int_{|y|\leq1}|b(t,x+ry)-\bar b(t,x,r)|dy\leq g(t),\quad g\in L^{2/(1+\beta)}([0,T]),
\end{equation}
with $\bar b(t,x,r)=0$ if $\beta\in(-1,0)$, and $\bar b(t,x,r)$ is the average of $b(t,\cdot)$ in a ball centered at $x$ with radius $r$ when $\beta=0$. On the other end of the spectrum, the reason why we omit the case $\beta=1$ is, very loosely speaking, a result of the fact that singular integral operators in general prefer H\"older over Lipschitz functions, see Lemma \ref{pressest} for more details. Nevertheless, Theorem \ref{thm3} complements the results of Silvestre-Vicol, upgrading the estimates from H\"older to Lipschitz. To be specific, they showed that if $b$ satisfies \eqref{condgsv} and if the initial data $u_0\in C_x^{0,\kappa}$, any $\kappa\in(0,1)$, then  the solution lies in $L_t^{\infty}C_x^{0,\kappa}([0,T]\times\rd)$. Our results fill this gap and says something when $\kappa=1$, at least in the range $\beta\in(0,1)$. Moreover, with regards to controlling the Lipschitz constant of a solution to the incompressible NSE when $\beta\in(-1,0]$, we can combine Theorem \ref{thm4} along with their results to obtain an estimate in terms of only the $W^{1,\infty}(\rd)$ norm of the initial data and the quantity \eqref{condgsv}.

\subsection{Motivation and heuristics}\label{secmotivation}
Let us start by recalling that sufficiently regular solutions to the NSE satisfy the following a-priori estimate
\begin{equation}\label{energy}
\|u(t,\cdot)\|_{L^2}^2+2\int_0^t\|\nabla u(s,\cdot)\|^2_{L^2}\ ds\leq \|u(0,\cdot)\|_{L^2}^2,\quad \forall t\geq0,
\end{equation}
and that such a control is sufficient to prevent blowup in two dimensions, but not when $d\geq3$. On the other hand, although solutions to the viscous Burgers equation, 
\begin{equation}\label{Burgers}
\partial_tu-\Delta u=(u\cdot\nabla)u,
\end{equation}
do not necessarily satisfy \eqref{energy}, we know that they do not develop singularities in finite time. The latter fact is due to the maximum principle $\|u(t,\cdot)\|_{L^{\infty}}\leq \|u(0,\cdot)\|_{L^{\infty}}$ \cite{LSUbook1968}. Similarly, such a maximum principle, if available for the NSE, would allow us to prove a global regularity result.

Unfortunately, due (in part) to the nonlocal dependence of the pressure term on the solution vector-field, such a-priori bound is not readily available. To understand the nonlocal structure of the equation, let us recall that the pressure is recovered from the velocity vector-field via
\begin{equation}\label{defpintro}
p:=\sum_{i,j=1}^d\partial_i\partial_j(-\Delta)^{-1}(u_iu_j)=\sum_{i,j=1}^dR_iR_j(u_iu_j)
\end{equation}
with $\{R_j\}_{j=1}^d$ denoting the standard Riesz transforms, the singular integral operators corresponding to the kernel \eqref{defkern} with $\Phi_j(z)=z_j/|z|$. Due to the lack of any ``obvious" useful pointwise upper bounds on $\partial_k p$ (in terms of $u$), it is not clear whether one can obtain a maximum principle in the same spirit as that of \eqref{Burgers}.

That being said, let us now recall the critically dissipative surface quasi-geostrophic (SQG) equation. In $d=2$, this equation reads
\begin{equation}\label{SQG}
\begin{cases}
	&\partial_t\theta+(-\Delta)^{1/2}\theta=(u\cdot\nabla)\theta,\\
	&u=(u_1,u_2)=(-R_2\theta,R_1\theta),\\
\end{cases}
\end{equation}
where $\theta$ is a scalar, and $R_1$, $R_2$ are the two dimensional Riesz transforms. This equation was first introduced in \cite{CMT1994}, and it was advocated as a toy model for the NS system, see also \cite{KNV2007} and the references therein. Equation \eqref{SQG} conserves all $L^p$ norms of the initial data, $p\in[1,\infty]$, however such control is not strong enough (in general) to deduce global regularity. The fact that evolution under equation \eqref{SQG} does not develop any singularities in finite time was proven by Kiselev, Nazarov and Volberg in \cite{KNV2007} (see also \cite{KNS2008} where the same tools were applied to the fractal Burgers equation simultaneously). A different proof was obtained by Caffarelli and Vasseur \cite{CaffarelliVasseur2010} almost at the same time. Other proofs surfaced in \cite{PCVC2012, KN2009, MM2013}. In this work, we build upon the ideas introduced in \cite{KNS2008, KNV2007}, which we now outline.

The elegant ideas introduced in \cite{KNS2008, KNV2007} are based on local continuity estimates, and allow for control of the Lipschitz constant of the solution \emph{by studying the evolution of the solution itself}, that is proving a ``maximum principle'' for $\nabla\theta$ by studying the evolution of $\theta$. This was achieved by constructing a family of strong moduli of continuity that must be preserved by the flow, and for which any arbitrary smooth enough initial data has a modulus of continuity belonging to such a family. Such moduli of continuity have a finite derivative at 0, thus providing an upper bound for the Lipschitz constant of the solution (see Lemma \eqref{gradbd}, below), meaning that their preservation would prevent a gradient blowup scenario, from which higher regularity follows by bootstrap. The central idea is to compare the power of dissipation, nonlinearity and nonlocality at the local level, carefully constructing the modulus of continuity in order to make sure dissipation will always prevail. 

This was later expanded upon in \cite{AH2008,DKSV2014,DD2008,Kiselev2011, MX2012} and the references therein, to name but a few. In particular, time dependent moduli of continuity were introduced in \cite{Kiselev2011} mainly in order to obtain certain eventual regularity results for a class of active scalar evolution equations. We recently extended this technique in \cite{Ibdah2020b} to a modified Michelson-Sivashinsky equation (see discussion below), where we were able to obtain a global regularity result. To our knowledge, this program has never been tried for the NS system. However, it was noted in \cite{KNV2007} that their original argument does not apply to the incompressible NS system ``due to the different structure of nonlinearity'', and we believe this work provides a partial remedy to that situation; we were able to at least initiate the study. 

  Tracking the evolution of moduli of continuity is in particular very useful for analyzing certain nonlinear parabolic equations in the presence of nonlocal terms, as it provides a ``medium'' capable of comparing stabilizing (dissipative) terms against destabilizing (certain nonlinear and/or nonlocal) terms at the same level. To more concretely demonstrate the last point, we consider the following problem analyzed in \cite{Ibdah2020b}, which essentially is what motivated this paper:
\begin{equation}\label{modMSE}
\partial_t\theta-\nu\Delta\theta+\frac{1}{2}|\nabla\theta|^2-(-\Delta)^{\alpha}\theta=0,\quad \alpha\in(0,1/2),
\end{equation}
If we drop the nonlocal term, one can prove a classical maximum principle \cite{LSUbook1968}, while on the other hand if we drop the nonlinearity, one can easily handle the instabilities arising from the nonlocal part by direct energy estimates, which may cause at most growth in time of higher order norms, but not blowup. The presence of both at the same time complicates things. Nevertheless, the singular integral operator $(-\Delta)^{\alpha}$ is known to map $C^{0,\beta}$ H\"older continuous functions to $C^{0,\beta-2\alpha}$, provided $0<2\alpha<\beta\leq1$ \cite{Silvestre2007}. Similarly, one can show that while it doesn't quite preserve abstract moduli of continuity, it doesn't distort them too much either \cite{Ibdah2020b}. Therefore one can successfully employ the above mentioned strategy and construct a certain modulus of continuity that must be obeyed by the solution for all time. 

Equation \eqref{modMSE} is a slight modification to the so called Michelson-Sivashinsky (MS) model, where the latter corresponds to the case when $\alpha=1/2$. The MS model is a refined combustion model based on the Darrieus–Landau flame stability analysis \cite{Sivashinsky1977}. The reason for restricting $\alpha\in(0,1/2)$ in \cite{Ibdah2020b} is because of the lack of any ``obvious'' continuity (or even $L^{\infty}$) estimate for the square root of the Laplacian of a function in terms of those known of function itself. To see this, recall the representation
\begin{equation}\label{sqrootlap}
(-\Delta)^{1/2}\theta=\sum_{j=1}^dR_j\partial_j\theta,
\end{equation}
and so even when $\theta$ has a strong modulus of continuity, all what we know is that $\nabla\theta\in L^{\infty}$, which is known to be a bad space for the Riesz transforms $R_j$ \cite{Stein1970book}. 

The relationship between equation \eqref{modMSE} (when $\alpha=1/2$) and the NS system \eqref{maineq} is based on the representations \eqref{defpintro} and \eqref{sqrootlap} for the nonlocal part of the corresponding evolution equation. From \eqref{defpintro}, we see that the ``local regularity'' of $\partial_kp$ is linked to that of the solution $u$ in a fashion similar to the link between the local regularity of $\theta$ and $(-\Delta)^{1/2}\theta$. One way of looking at it is by noting that both nonlocal operators, $(-\Delta)^{1/2}$  and $\nabla R_iR_j$, are of order one. In general, one should not expect to obtain continuity estimates on $\mathcal{N}\theta$ from those known on $\theta$ if $\mathcal{N}$ is an operator of order one or higher. Of course, the situation in NSE is more complicated due to the fact that the pressure depends not only nonlocally on the evolving entity (the velocity-field), but also nonlinearly.

 With the previous remarks in mind, let us focus purely on the incompressible Navier-Stokes system. A key observation that we will use in the proof of Theorems \ref{thm1} and \ref{thm4} is one that was made by Silvestre in an unpublished work \cite{Silvestre2010unpub}. By performing more careful local analysis of the pressure term and using incompressibility in a vital way, it was argued that if $u\in C^{0,\beta}$ then $p\in C^{0,2\beta}$. When $\beta\in(1/2,1)$, this translates to a H\"older condition on $\nabla p$. It is also important to further highlight the fact that those H\"older estimates are homogenous: \emph{they do not depend on the $L^{\infty}$ norm of $u$}. In particular, the generalized continuity estimate we get for $\nabla p$ in \eqref{pressestintro} below is purely local. This is exactly the kind of estimate required to track the evolution of moduli of continuity. Similar results were also obtained by Constantin \cite{Constantin2014}, Isett \cite{isett2013regularity} (also Isett and Oh \cite{IO2016}) and were observed by De Lellis and  Sz\'ekelyhidi Jr. \cite{DS2014} in the context of convex integration. We generalize Silvestre's argument to an abstract modulus of continuity in Lemma \ref{pressest}, below, and show that if $\omega$ is a modulus of continuity according to Definition \ref{defmod} and if
\[
|u(x)-u(z)|\leq\omega(|x-z|),\quad \forall (x,z)\in\rd\times\rd,
\]
then
\begin{equation}\label{pressestintro}
|\nabla p(x)-\nabla p(z)|\leq C_d\left[\int_{0}^{|x-z|}\frac{\omega^2(\eta)}{\eta^2}\ d\eta+\omega(|x-z|)\int_{|x-z|}^{\infty}\frac{\omega(\eta)}{\eta^2}\ d\eta\right],
\end{equation}
where $C_d$ is a positive, absolute universal constant depending only on the dimension $d$ and not on any norm of $u$ or $p$. The previous estimate is exactly the origin of the integrals appearing in Hypothesis \ref{hypoth}, while the transport part translates to the Burgers nonlinearity appearing in inequality \eqref{modevolintro}. Such an estimate allows us to at least initiate the study of propagation of moduli of continuity by the NSE, as opposed to \eqref{modMSE} with $\alpha=1/2$.

Notice that the singularity appearing in \eqref{pressestintro} corresponds to that of an operator of order one, while nonlinearity translates to the appearance of the quadratic term $\omega^2$. It is unclear to us what effect those integrals have on the Dirichlet to Neumann map of the solution to the one dimensional Burgers boundary value problem described in Hypothesis \ref{hypoth}. To be more specific, we need to ensure that $\partial_\xi\Omega(t,0)<\infty$, or at least $\partial_\xi\Omega(\cdot,0)\in L^1(0,T)$, for any $T>0$. Thus, in an attempt to better understand this, we considered the two model problems \eqref{maineq2} and \eqref{maineq3intro}. The first corresponds to an equation where part of the full nonlinear structure of the NSE is preserved; we assumed that the nonlocal part depends linearly on the solution, while retaining $(u\cdot\nabla)u$. Moreover, we assumed that the nonlocal operator is of order zero, going from that to higher order seems nontrivial and out of reach at this point (in the presence of an advective nonlinearity). In particular, we are unable to show that solutions to 
\[
\partial_t u-\Delta u=(u\cdot\nabla)u+(-\Delta)^{\alpha}u,
\]
are regular, no matter how small $\alpha>0$ is. The special case when the initial data  (and hence the solution) is conservative, $u_0=\nabla\theta_0$ for some scalar $\theta_0$, was addressed in our previous work \cite{Ibdah2020b}. The difficulty is that one experiences a $2\alpha$ loss in regularity when propagating Lipschitz moduli of continuity, a fact that will be translated into a singular zero order term at the continuity level, which we were unable to control (when coupled with a Burgers nonlinearity). On the other hand, model \eqref{maineq3intro} is linear with a nonlocal term that mimics an operator of order $1-\beta$, as will be shown in Lemma \ref{pressest}, below. In this case, what we can guarantee under a supercritical assumption on the drift velocity is that the solution is differentiable almost everywhere, with a logarithmically integrable Lipschitz constant, leading to a partial regularity result for the NSE. This can be strengthened further under a critical assumption on the advecting quantity, leading to global regularity in the case of NSE.

Before starting to prove our results, let us explain at least heuristically the origin behind the double-exponential growth seen in Theorem \ref{thm2}, as well as the reason behind the exponential term (which requires a critical assumption) in Theorem \ref{thm4}. Very loosely speaking, this is reminiscent of the fact that ``lower order terms'' in parabolic operators in general would lead to exponential growth when trying to obtain maximum principles. This is best demonstrated via example: solutions to the viscous Burgers equation
\[
\partial_t u-\Delta u=(u\cdot\nabla)u(t,x),
\]
satisfy the classical maximum principle $\|u(t,\cdot)\|_{L^{\infty}}\leq \|u_0\|_{L^{\infty}}$. We show later on in \S\ref{pfthm1} that one also has $\|\nabla u(t,\cdot)\|_{L^{\infty}}\lesssim \|u_0\|_{W^{1,\infty}}$. However, adding a zero order term (with a bad sign)
\[
\partial_t u-\Delta u=(u\cdot\nabla)u(t,x)+u,
\]
would lead to a bound of the form $\|u(t,\cdot)\|_{L^{\infty}}\leq e^t\|u(0,\cdot)\|_{L^{\infty}}$, an exponential growth. It is not too far-fetched to expect that replacing $u$ with a zero-order non-local term $\mathcal{N}u$ would also lead to exponential growth (unless one specializes to certain operators that may be dispersive in nature). Recall that singular integral operators in general do prefer H\"older over Lipschitz functions. This potentially could explain why we see double-exponential growth in the Lipschitz bound in Theorem \ref{thm2}: one losses a ``logarithmic'' degree of regularity due to the fact that in most cases $\mathcal{N}u$ is at most H\"older if $u$ is only Lipschitz (and bounded). At the technical level, we would need our modulus of continuity to satisfy 
\[
\partial_t\Omega(t,\xi)-4\partial^2_{\xi}\Omega(t,\xi)-\Omega(t,\xi)\partial_\xi\Omega(t,\xi)-C_{K,d}\left[\int_{0}^{3\xi}\frac{\Omega(t,\eta)}{\eta}d\eta+\xi^{\rho}\int_{3\xi}^{\infty}\frac{\Omega(t,\eta)}{\eta^{1+\rho}}d\eta\right]\geq0,
\]
with the integral terms corresponding to a ``non-local'' zero order term. 

Similarly, the reason behind why estimate \eqref{supcritdd} is hard to obtain for the NSE is because the (non-local) pressure term would lead us to having to roughly deal with
\[
\partial_t\Omega(t,\xi)-\partial^2_\xi\Omega(t,\xi)-g(t)\xi^{\beta}\partial_\xi\Omega(t,\xi)-g(t)\xi^{1-\beta}\Omega(t,\xi)\geq0,
\]
together with certain integrals that  are of the same order as the last term in the above inequality. The transport term is due to the advective term $b\cdot\nabla$, while the singular zero order term is due to the pressure, and is not present when analyzing \eqref{classicalddintro}. So we  should expect at least some exponential growth when trying to apply this technique to the NSE. The fact that the coefficient of the zero order term is singular, while we are trying to measure Lipschitz continuity, is the main reason why we had to downgrade our assumption from a supercritical to a critical one.

\section{Preliminaries}\label{secprelim}
In \S\ref{classtheory}, we review the classical local well-posedness results and regularity criteria of smooth solutions. We proceed in \S\ref{modulusofcontinuity} to recall the various tools and results used in studying the evolution of moduli of continuity as introduced in \cite{Kiselev2011,KNS2008,KNV2007}.
\subsection{Local Well-Posedness and Regularity}\label{classtheory}
Let us start by recalling the definition of the homogenous Sobolev space of (integer) order $k\geq0$ over the torus $\tr$,
\begin{equation}\label{defHk}
\dot{H}^k:=\left\{\theta(x)=\sum_{n\in\mathbb{Z}^d}\hat\theta(n)e^{2\pi in\cdot x/L}:\hat\theta(0)=0,\ \hat\theta(-n)=\bar{\hat\theta}(n),\  \sum_{n\in\mathbb{Z}^d}|n|^{2k}|\hat\theta(n)|^2<\infty\right\},
\end{equation}
where 
\[
\hat\theta(n):=\frac{1}{L^d}\int_{\tr}\theta(x)e^{-2\pi in\cdot x/L}\ dx,
\]
and the corresponding space of divergence-free, periodic vector-fields with components belonging to $\dot{H}^k$,
\begin{equation}\label{defVk}
\mathbb{V}^k:=\left\{u:\rd\rightarrow\rd:u_j\in \dot{H}^k,j\in\{1,\cdots, d\},\ n\cdot\hat u(n)=0\ \forall n\in\mathbb{Z}^d\right\},
\end{equation}
where $\hat{u}(n)\in\mathbb{C}^d$ is the vector with components $\hat{u}_j(n)$.
Recall from the Sobolev imbedding theorem that if $\theta\in\dot{H}^k$ and $k>d/2$, then $\theta$ can be identified with a continuous function that is defined on all of $\rd$ and which is $L-$ periodic in all directions. In this work, we will always assume $k>d/2+2$, so that all of our functions can be realized as $C^2_{per}(\rd)$ functions. 

It is a well-known fact to experts and the mathematical fluid mechanics community that the pressure is recovered from the velocity vector-field by solving the elliptic problem (in space) for every $t\geq0$
\[
-\Delta p(t,x)=\text{div}\left[(u\cdot\nabla)u\right](t,x),
\]
see for instance \cite{Lemarie2016book,RRSBook2016} for details. In particular, using the fact that $u$ is divergence free, we see that
\[
-\Delta p(t,x)=\sum_{i,j=1}^d\partial_j u_i(t,x)\partial_i u_j(t,x),
\]
so that by Sobolev embedding and elliptic regularity results, if $u(t,\cdot)\in\mathbb{V}^k$ with $k>d/2+1$, we must have $p(t,\cdot)\in \dot{H}^{k+1}$. That is to say, the regularity of $u$ determines that of $p$. We then have the following version of short-time existence, uniqueness and regularity criterion for classical solutions to \eqref{maineq}. A proof (as well as analogous results on bounded domains with Dirichlet conditions or whole space scenarios) can be found in any of the classical textbooks on fluid mechanics, for instance \cite{CFbook1998, Ladyzhenskayabook1969, Lemarie2016book,RRSBook2016}.
\begin{thm}\label{shortime}
Let $d\geq 3$, $k>d/2+2$ and suppose $u^0\in\mathbb{V}^k$. Then there is a $T_0=T_0(\|u^0\|_{\mathbb{V}^k},L)>0$ and a solution $(u,p)$ to \eqref{maineq} with regularity
\begin{equation}\label{regclass}
u\in C([0,T_0];\mathbb{V}^k)\cap C^1([0,T_0];\mathbb{V}^{k-2}), \quad p\in C([0,T_0];\dot{H}^{k+1})\cap C^1([0,T_0];\dot{H}^{k-2}),
\end{equation}
for which 
\[
\partial_tu(t,x)-\Delta u(t,x)=(u\cdot\nabla)u(t,x)+\nabla p(t,x),
\]
holds true in $C([0,T_0];\mathbb{V}^{k-2})$; in particular, since $k>d/2+2$, it holds true in the pointwise sense for every $(t,x)\in [0,T_0]\times\rd$. Furthermore, $(u,p)$ is unique in the regularity class \eqref{regclass} (in fact, depends continuously on initial data), and if
\[
T_k:=\sup\left\{T>0:\|u(t,\cdot)\|_{\mathbb{V}^k}<\infty,\ \forall t\in[0,T] \right\},
\]
is the maximal time of existence of the solution in $\mathbb{V}^k$, then $T_k=T_*$, where
\[
T_*:=\sup\left\{T>0:\|\nabla u(t,\cdot)\|_{L^{\infty}}<\infty,\ \forall t\in[0,T]\right\}.
\]
In particular, if $u^0\in \mathbb{V}^k$ for every $k\geq0$, then the pair $(u,p)$ is a classical solution to \eqref{maineq} on $[0,T_*)$.
\end{thm}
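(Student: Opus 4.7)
The plan is to follow the classical Galerkin approximation scheme, which is by now routine. Let $P_N$ denote the orthogonal projection onto divergence-free trigonometric polynomials of degree at most $N$ and consider the truncated system
\[
\partial_t u^N - \Delta u^N = P_N\bigl[(u^N\cdot\nabla)u^N\bigr],\qquad u^N(0)=P_N u^0,
\]
in which the pressure is implicitly eliminated by the Leray projection. Picard theory supplies short-time solutions in each finite-dimensional subspace, and the first job is to derive a uniform $\mathbb{V}^k$ bound by pairing the equation with $(-\Delta)^k u^N$ in $L^2$. The dissipation contributes $\|\nabla u^N\|_{\dot H^k}^2$ with the favorable sign, while the nonlinear term, after a Moser/Kato--Ponce commutator estimate and the Sobolev embedding $\mathbb{V}^k\hookrightarrow W^{1,\infty}$ (valid for $k>d/2+2$), is controlled by $C\|\nabla u^N\|_{L^\infty}\|u^N\|_{\mathbb{V}^k}^2\leq C\|u^N\|_{\mathbb{V}^k}^3$. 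A Bernoulli-type comparison then yields a uniform existence time $T_0=T_0(\|u^0\|_{\mathbb{V}^k},L)$ on which the approximants stay bounded in $\mathbb{V}^k$ independently of $N$.

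Passing to the limit is standard: the time derivative is controlled in $\mathbb{V}^{k-2}$, so an Aubin--Lions compactness argument extracts a limit $u\in L^\infty([0,T_0];\mathbb{V}^k)$, upgraded to continuity in time by the usual weak-limit/energy-identity arguments. The pressure is recovered in each time slice by solving $-\Delta p=\sum_{i,j}\partial_j u_i\,\partial_i u_j$, and elliptic regularity together with Sobolev product estimates give the claimed $p\in C([0,T_0];\dot H^{k+1})\cap C^1([0,T_0];\dot H^{k-2})$, yielding the full regularity class \eqref{regclass}. Uniqueness and continuous dependence follow from an $L^2$ energy estimate on the difference $w=u^{(1)}-u^{(2)}$ of two solutions: incompressibility eliminates the pressure contribution and the remaining advective coupling is bounded by $\|\nabla u^{(2)}\|_{L^\infty}\|w\|_{L^2}^2$, so Gronwall closes the estimate.

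For the regularity criterion $T_k=T_*$, the inclusion $T_k\leq T_*$ is immediate from the Sobolev embedding $\|\nabla u\|_{L^\infty}\lesssim \|u\|_{\mathbb{V}^k}$. The nontrivial direction is a Beale--Kato--Majda-type continuation argument: assuming $\sup_{[0,T]}\|\nabla u(t,\cdot)\|_{L^\infty}<\infty$, one pairs the equation with $(-\Delta)^k u$ and invokes the Kato--Ponce commutator inequality (together with incompressibility to kill the pressure term) to arrive at
\[
\tfrac{1}{2}\frac{d}{dt}\|u\|_{\mathbb{V}^k}^2+\|\nabla u\|_{\dot H^k}^2\leq C\|\nabla u\|_{L^\infty}\|u\|_{\mathbb{V}^k}^2.
\]
Gronwall then produces a uniform $\mathbb{V}^k$ bound on $[0,T]$, and the standard continuation principle forbids $T_k<T_*$; iterating in $k$ gives the final claim that $u^0\in\bigcap_k\mathbb{V}^k$ yields a classical solution on $[0,T_*)$. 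The main technical obstacle throughout is the careful handling of the nonlinear commutator term, which is precisely where the Kato--Ponce/Moser estimate does the real work; once that inequality is available, the rest is routine application of elliptic regularity, compactness, and Gronwall.
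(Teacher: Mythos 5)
Your sketch is correct and follows essentially the same route the paper indicates: the paper does not prove Theorem \ref{shortime} itself but cites standard references and, in the remarks surrounding Theorem \ref{shortime2}, explicitly points to a Galerkin construction combined with $\dot H^k$ energy estimates using the product inequality \eqref{prodest} (the Kato--Ponce/Moser bound from \cite{KM1981}), plus the weak Beale--Kato--Majda argument for the continuation criterion $T_k=T_*$ — which is exactly what you do. Your treatment of existence (uniform $\mathbb{V}^k$ bound via the Riccati comparison, Aubin--Lions, pressure recovery by elliptic regularity), uniqueness (Gr\"onwall on the difference with cancellation of the pressure and the $(u\cdot\nabla)w$ term by incompressibility), and the two-sided identification of $T_k$ with $T_*$ (Sobolev embedding for $T_k\le T_*$, Kato--Ponce plus Gr\"onwall continuation for $T_*\le T_k$) are all standard and match the approach the paper points the reader to.
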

The fact that the solution is regular on $[0,T_*)$ follows from, for instance, the famous Beale-Kato-Majda \cite{BKM1984} criterion, which asserts that smooth solutions to the Navier-Stokes and Euler equations do not develop singularities on $[0,T]$ provided 
\[
\int_0^T\|\nabla\times u(t,\cdot)\|_{L^{\infty}}\ dt<\infty.
\] 
One can also control higher order norms of the solution on $[0,T]$ in terms of 
\[
\sup_{t\in[0,T)}\|\nabla u(t,\cdot)\|_{L^{\infty}}
\]
by direct energy estimates via utilizing the following calculus inequality valid for smooth enough functions $f$ and $g$ and proven in \cite[Lemma A.1]{KM1981},
\begin{equation}\label{prodest}
\left\|\partial^{\alpha}(fg)\right\|_{L^2}\leq C_{d,\alpha}\left(\|f\|_{L^{\infty}}\|\partial^{\alpha}g\|_{L^2}+\|g\|_{L^{\infty}}\|\partial^{\alpha}f\|_{L^2}\right).
\end{equation}

We also need an analogous result for equation \eqref{maineq2}, which we now state. Note that in this case, we no longer require incompressibility, and so we look for solutions in the space
\[
\dot{\mathbb{H}}^k:=\left\{u:\rd\rightarrow\rd:u_j\in \dot{H}^k,j\in\{1,\cdots, d\}\right\}.
\]
\begin{thm}\label{shortime2}
Let $d\geq 2$, $k>d/2+2$ and suppose $u^0\in\dot{\mathbb{H}}^k$. Then there is a $T_0=T_0(\|u^0\|_{\dot{\mathbb{H}}^k},L)>0$ and a solution $v$ to \eqref{maineq2} with regularity
\begin{equation}\label{regclass2}
\begin{aligned}
&u\in C([0,T_0];\dot{\mathbb{H}}^k)\cap C^1([0,T_0];\dot{\mathbb{H}}^{k-2}),
\end{aligned}
\end{equation}
for which 
\[
\partial_t u(t,x)-\Delta u(t,x)=(u\cdot\nabla)u(t,x)+\mathcal{N}_pu(t,x),
\]
holds true in $C([0,T_0];\dot{\mathbb{H}}^{k-2})$; in particular, since $k>d/2+2$, it holds true in the pointwise sense for every $(t,x)\in [0,T_0]\times\rd$. Furthermore, $u$ is unique in the regularity class \eqref{regclass2} (in fact, depends continuously on initial data), and if
\[
T_k:=\sup\left\{T>0:\|u(t,\cdot)\|_{\dot{\mathbb{H}}^k}<\infty,\ \forall t\in[0,T] \right\},
\]
is the maximal time of existence of the solution in $\dot{\mathbb{H}}^k$, then $T_k=T_*$, where
\[
T_*:=\sup\left\{T>0:\|\nabla u(t,\cdot)\|_{L^{\infty}}<\infty,\ \forall t\in[0,T]\right\}.
\]
In particular, if $u^0\in \dot{\mathbb{H}}^k$ for every $k\geq0$, then $u$ is a classical solution to \eqref{maineq2} on $[0,T_*)$.	
\end{thm}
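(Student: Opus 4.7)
My plan is to prove Theorem \ref{shortime2} by a standard Galerkin truncation scheme combined with energy estimates, making essential use of two facts: the product estimate \eqref{prodest}, and the fact that the periodized operator $\mathcal{N}_p$, whose Fourier multiplier $\widehat K$ from \eqref{Khat} is homogeneous of degree zero and bounded on $\mathbb{S}^{d-1}$, acts boundedly on every $\dot{H}^k$. Concretely, I would fix $N\in\mathbb{N}$, project \eqref{maineq2} onto the finite-dimensional subspace spanned by the Fourier modes $\{e^{2\pi i n\cdot x/L}:0<|n|\leq N\}$, and solve the resulting finite ODE system to obtain a smooth approximation $u^N$ on some maximal interval $[0,T_N)$.

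The heart of the matter is to derive a bound on $\|u^N(t,\cdot)\|_{\dot{\mathbb{H}}^k}$ that is uniform in $N$. For any multi-index $\alpha$ with $|\alpha|\leq k$, I apply $\partial^\alpha$ to the equation, pair with $\partial^\alpha u^N$ in $L^2$, and sum. The viscous term yields the coercivity $2\|u^N\|_{\dot{\mathbb{H}}^{k+1}}^2$. For the transport term, splitting $\partial^\alpha((u^N\cdot\nabla)u^N)=(u^N\cdot\nabla)\partial^\alpha u^N+[\partial^\alpha,u^N\cdot\nabla]u^N$ and integrating by parts on the leading piece produces $-\tfrac{1}{2}\int(\nabla\cdot u^N)|\partial^\alpha u^N|^2$, which is $\lesssim\|\nabla u^N\|_{L^\infty}\|u^N\|_{\dot{\mathbb{H}}^k}^2$; the commutator, by a Kato-Ponce type inequality that follows from iterating \eqref{prodest}, satisfies $\|[\partial^\alpha,u^N\cdot\nabla]u^N\|_{L^2}\lesssim\|\nabla u^N\|_{L^\infty}\|u^N\|_{\dot{\mathbb{H}}^k}$. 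The nonlocal term contributes at most $C\|u^N\|_{\dot{\mathbb{H}}^k}^2$ by boundedness of $\mathcal{N}_p$ on $\dot{H}^k$. Invoking Sobolev embedding (valid since $k>d/2+2$) to bound $\|\nabla u^N\|_{L^\infty}\lesssim\|u^N\|_{\dot{\mathbb{H}}^k}$, I arrive at the Riccati-type differential inequality $\tfrac{d}{dt}\|u^N\|_{\dot{\mathbb{H}}^k}^2\leq C(1+\|u^N\|_{\dot{\mathbb{H}}^k})\|u^N\|_{\dot{\mathbb{H}}^k}^2$, yielding a time of existence $T_0=T_0(\|u^0\|_{\dot{\mathbb{H}}^k},L)>0$ that is independent of $N$.

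With uniform bounds in hand, using the equation itself to bound $\partial_t u^N$ in $\dot{\mathbb{H}}^{k-2}$, an Aubin-Lions compactness argument extracts a subsequence whose limit $u$ satisfies the claimed regularity $C([0,T_0];\dot{\mathbb{H}}^k)\cap C^1([0,T_0];\dot{\mathbb{H}}^{k-2})$ and solves \eqref{maineq2} pointwise (since $k>d/2+2$). Uniqueness and continuous dependence on the initial data follow by the standard trick of estimating $w:=u-v$ for two solutions in $L^2$: rewriting the difference equation and using $\|\mathcal{N}_p\|_{L^2\to L^2}\leq C$ and $\|\nabla u\|_{L^\infty}+\|\nabla v\|_{L^\infty}\in L^\infty_t$, Grönwall's inequality closes the argument.

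Finally, the regularity criterion $T_k=T_*$ is a direct consequence of the \emph{linearized} form of the above energy estimate, namely
\[
\tfrac{d}{dt}\|u(t,\cdot)\|_{\dot{\mathbb{H}}^k}^2+2\|u(t,\cdot)\|_{\dot{\mathbb{H}}^{k+1}}^2\leq C\bigl(\|\nabla u(t,\cdot)\|_{L^\infty}+1\bigr)\|u(t,\cdot)\|_{\dot{\mathbb{H}}^k}^2,
\]
whose right-hand side depends on $u$ only through $\|\nabla u\|_{L^\infty}$. Grönwall yields $\|u(t,\cdot)\|_{\dot{\mathbb{H}}^k}^2\leq\|u^0\|_{\dot{\mathbb{H}}^k}^2\exp(C\int_0^t(\|\nabla u(s,\cdot)\|_{L^\infty}+1)\,ds)$, so that $\int_0^T\|\nabla u\|_{L^\infty}\,dt<\infty$ forces $\|u\|_{\dot{\mathbb{H}}^k}$ to stay finite on $[0,T]$, giving $T_k\geq T_*$; the reverse inequality is immediate from Sobolev embedding. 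The one point that requires mild care, in contrast to the Navier-Stokes setting of Theorem \ref{shortime}, is the loss of the divergence-free cancellation $\int(u\cdot\nabla)\partial^\alpha u\cdot\partial^\alpha u=0$; however, the residual $-\tfrac{1}{2}\int(\nabla\cdot u)|\partial^\alpha u|^2$ is controlled by the very same $\|\nabla u\|_{L^\infty}$ factor that already appears in the commutator term, so no new structural obstacle arises.
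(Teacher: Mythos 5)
Your proposal is correct and follows essentially the same route the paper itself sketches: a Galerkin approximation closed by $\dot{H}^k$ energy estimates, exploiting the $L^2$-boundedness of the zero-order multiplier $\mathcal{N}_p$ on $\dot{H}^k$ together with the Moser/Kato–Ponce estimate \eqref{prodest} to handle the transport term and to derive the Beale–Kato–Majda–type criterion $T_k=T_*$. Your observation that the lost divergence-free cancellation produces only the harmless residual $-\tfrac12\int(\nabla\cdot u)|\partial^\alpha u|^2$, already controlled by $\|\nabla u\|_{L^\infty}$, is precisely the point that makes the argument carry over from Theorem \ref{shortime} to the non-incompressible setting.
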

The proof of the above theorem is pretty much similar to that of Theorem \ref{shortime}; one can construct a solution via Galerkin approximations and utilize the energy bound $\|(-\Delta)^{k/2}\mathcal{N}_p \theta\|_{L^2}\leq C\|\theta\|_{\dot{H}^k}$, while the regularity criterion can be obtained by utilizing the product estimate \eqref{prodest}.

\subsection{Moduli of Continuity}\label{modulusofcontinuity}
Let us review some of the terminology and basic results used when studying the evolution of moduli of continuity. Such results were collectively obtained in \cite{Kiselev2011, KNS2008, KNV2007} and were summarized in \cite{Ibdah2020b}. Nevertheless, we provide proofs here for the sake of convenience and completeness. Moreover, Lemma \ref{buildmod} allows for moduli of continuity that may be bounded, which we can consider in this case since the scaling invariance of the Navier-Stokes system is different from that of the critical SQG and critical Burgers equation. Throughout this work, for any $z\in\rd$, by $z_j$ we mean the $j^{th}$ coordinate of $z$, and by $|z|$ we mean the standard Euclidean norm. Given any vector field $u:\rd\rightarrow\mathbb{R}^d$, we remind the reader that if $X$ is a space consisting of scalar functions defined on $\rd$, we abuse notation and say $u\in X$ provided each component $u_j\in X$. Further, given any vector field $u\in W^{1,\infty}(\rd)$, we define its Lipschitz constant as 
\begin{equation}\label{deflipvf}
\|\nabla u\|_{L^\infty}:=\sup_{x\in \rd}\sup_{\substack {e\in \rd\\ |e|=1}}\left|J_u(x)e\right|,
\end{equation}
where $J_u(x)$ is the Jacobian matrix of $u$, the matrix whose row vectors are $\nabla u_j$, evaluated at a point $x\in\rd$. We invite the reader to verify that 
\begin{equation}\label{normequiv}
\frac{1}{\sqrt{d}}\left(\sum_{j=1}^d\|\nabla u_j\|_{L^{\infty}}^2\right)^{1/2}\leq \|\nabla u\|_{L^{\infty}}\leq \left(\sum_{j=1}^d\|\nabla u_j\|_{L^{\infty}}^2\right)^{1/2},
\end{equation}
so that $u\in W^{1,\infty}(\rd)$ if and only if $u$ is bounded and $\|\nabla u\|_{L^{\infty}}<\infty$. Furthermore, from the fundamental theorem of calculus, we see that $|u(x)-u(y)|\leq \|\nabla u\|_{L^{\infty}}|x-y|$.
\begin{defs}\label{defobeymod}
Let $\omega$ be a modulus of continuity as in Definition \ref{defmod} and let $u:\rd\rightarrow\mathbb{R}^d$ be a vector field. We say $u$ has modulus of continuity $\omega(\xi)$ if $|u(x)-u(y)|\leq\omega(|x-y|)$ for every $(x,y)\in\rd\times\rd$. We say $u$ strictly obeys $\omega$ if $|u(x)-u(y)|<\omega(|x-y|)$ whenever $x\neq y$.	
\end{defs}
To avoid cumbersome notation, we drop the $L^{\infty}$ subscript from $\|\cdot\|_{L^{\infty}}$ in the proofs of the following two Lemmas. 
\begin{lem}\label{buildmod}
Let $u\in W^{1,\infty}\left(\mathbb{R}^d\right)$ be a bounded and Lipschitz vector field, and suppose $\omega$ is a modulus of continuity. Suppose further that $\omega$ is concave on a small interval $(0,\delta]$ some $\delta>0$. Then there exists $B_{0}>0$ depending only on $\|u\|_{L^{\infty}}$, $\|\nabla u\|_{L^{\infty}}$, and $\omega$ such that $u$ strictly obeys $B\omega(B\xi)$ whenever $B\geq B_0$. If $\omega$ is unbounded and concave on $(0,\infty)$, then $B_0$ can be chosen such that $u$ strictly obeys $\omega(B\xi)$ whenever $B\geq B_0$.
\end{lem}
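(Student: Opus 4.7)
The plan is a dichotomy argument on $\xi:=|x-y|$: use the Lipschitz bound on $u$ for short distances and the $L^\infty$ bound for long distances. With $B>0$ to be chosen, I place the cutoff at $\xi_\ast:=\delta/B$, so that the short-distance regime $\xi\leq\xi_\ast$ matches exactly the window $B\xi\in(0,\delta]$ on which $\omega$ is concave.

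For the short-distance regime, the chord from $(0,\omega(0))$ to $(\delta,\omega(\delta))$ combined with concavity of $\omega$ on $(0,\delta]$ gives the linear lower bound $\omega(\eta)\geq \eta\,\omega(\delta)/\delta$ on this interval (using $\omega(0)\geq 0$). Multiplying through by $B$ yields
\[
B\omega(B\xi)\geq \frac{B^2\omega(\delta)}{\delta}\,\xi,\qquad 0<\xi\leq\xi_\ast,
\]
which strictly dominates $|u(x)-u(y)|\leq \|\nabla u\|_{L^\infty}\xi$ as soon as $B^2\omega(\delta)>\delta\|\nabla u\|_{L^\infty}$. In the long-distance regime $\xi>\xi_\ast$, monotonicity of $\omega$ gives $B\omega(B\xi)\geq B\omega(\delta)$, and this strictly exceeds the $L^\infty$ bound $|u(x)-u(y)|\leq 2\|u\|_{L^\infty}$ as soon as $B\omega(\delta)>2\|u\|_{L^\infty}$. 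Summing the two natural thresholds yields precisely the explicit lower bound on $B_0$ displayed in Theorem \ref{thm1}.

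For the unbounded-concave case, the same dichotomy is used with a shifted cutoff. Using the unboundedness of $\omega$, pick $\eta_M>0$ so that $\omega(\eta_M)>2\|u\|_{L^\infty}$ and set $\xi_\ast=\eta_M/B$; concavity now being available on all of $(0,\infty)$, the short-distance estimate runs verbatim with $\eta_M$ in place of $\delta$, while monotonicity at the cutoff yields $\omega(B\xi)\geq\omega(\eta_M)>2\|u\|_{L^\infty}$ in the long-distance regime---the outer factor of $B$ is no longer needed, and one obtains strict obedience of $\omega(B\xi)$ itself. The principal point of care throughout is maintaining every intermediate inequality in strict form; this is ensured by choosing $B_0$ strictly above each of the two natural thresholds and by noting that the short-distance chain inherits strictness from $\xi>0$, so no substantive obstruction arises.
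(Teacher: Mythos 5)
Your proof is correct and follows essentially the same strategy as the paper's: a dichotomy at the cutoff $\xi=\delta/B$, using the Lipschitz bound together with the concavity-based linear lower bound $\omega(\eta)\geq\eta\omega(\delta)/\delta$ on the short-range side, and the $L^\infty$ bound together with monotonicity on the long-range side, then summing the two thresholds to obtain the explicit $B_0$. The paper phrases the short-range step through the observation that $\|\nabla u\|_{L^\infty}-B_0\omega(B_0\xi)/\xi$ is non-decreasing on $(0,\delta B_0^{-1}]$, which is the same underlying fact as your chord bound (namely that $\omega(\eta)/\eta$ is non-increasing on the concavity interval), so there is no substantive difference in the argument.
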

\begin{rmk}
In particular, strong moduli of continuity according to Definition \ref{defmod} satisfy the hypothesis of the above Lemma.
\end{rmk}

\begin{proof}
We ignore the trivial case when $u$ is a constant vector. Set 
\begin{equation}\label{bnod}
B_0:=\frac{2}{\omega(\delta)}\|u\|+\left(\frac{\delta}{\omega(\delta)}\|\nabla u\|\right)^{1/2},
\end{equation}
and let $\xi:=|x-y|>0$. Notice that as $\omega$ is concave on $(0,\delta]$, the function
\[
h(\xi):=\|\nabla u\|-\frac{B_0\omega(B_0\xi)}{\xi}
\]
is increasing on $(0,\delta B_0^{-1}]$. This means that 
\[
h(\xi)\leq h(\delta B_0^{-1})=\|\nabla u\|-\frac{B_0^2\omega(\delta)}{\delta}<0,
\]
by choice of $B_0$ \eqref{bnod}. Hence, for $|x-y|\in(0,\delta B_0^{-1}]$, we must have 
\[
|u(x)-u(y)|\leq \|\nabla u\||x-y|<B_0\omega(B_0|x-y|),
\]	
where the first inequality follows from definition \eqref{deflipvf} and the fundamental theorem of calculus. On the other hand, for $\xi\in[\delta B_0^{-1},\infty)$, since $\omega$ is nondecreasing, we have again by choice of $B_0$,
\[
B_0\omega(B_0\xi)\geq B_0\omega(\delta)>2\|u\|\geq|u(x)-u(y)|.
\]
Therefore, we get that for any $|x-y|\in(0,\infty)$,
\[
|u(x)-u(y)|<B_0\omega(B_0|x-y|),
\]
from which we conclude that since $\omega$ is nondecreasing, $u$ has strict modulus of continuity $B\omega(B\xi)$ for any $B\geq B_0$. If $\omega$ is unbounded and concave on $(0,\infty)$, then we can replace $B_0$ from \eqref{bnod} by a large $B_0$ depending on $\omega$ such that $\omega(B_0)\geq \max\{\|\nabla u\|+1,2\|u\|+1\}$ and then repeat the previous argument with $\delta=1$.
\end{proof}
\begin{lem}\label{gradbd}
Suppose a vector field $u\in C^2(\mathbb{R}^d)\cap W^{2,\infty}(\rd)$ has a strong modulus of continuity $\omega$. It then follows that $u$ is Lipschitz and $\|\nabla u\|_{L^{\infty}}<\omega'(0)$.
\end{lem}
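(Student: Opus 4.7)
The plan is to combine the second-order smoothness of $u$ with the very rapid drop-off of $\omega'$ near $\xi=0$. The $\leq$ part follows from a one-sided derivative calculation, and the strictness will come from a quantitative mismatch: a $W^{2,\infty}$ vector field can only deviate from its linearization by $O(h^2)$ with a fixed constant, whereas the condition $\omega''(\xi)\to-\infty$ forces $\omega(h)-h\omega'(0)$ to be negative at a quadratic rate with arbitrarily large coefficient.

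First, I would expand $u$ by Taylor's theorem. For any $x\in\mathbb{R}^d$ and unit vector $e\in\mathbb{R}^d$, and writing $C:=\|D^2u\|_{L^\infty}<\infty$, one has
\[
\bigl|u(x+he)-u(x)-h\,J_u(x)e\bigr|\leq \tfrac12 C h^2,\qquad h>0.
\]
Combining with the modulus of continuity assumption $|u(x+he)-u(x)|\leq\omega(h)$ yields
\[
h\,|J_u(x)e|\leq \omega(h)+\tfrac12 Ch^2,
\]
and dividing by $h$ and letting $h\to 0^+$ (using $\omega(0)=0$, so $\omega(h)/h\to\omega'(0)$) gives $|J_u(x)e|\leq \omega'(0)$, hence $\|\nabla u\|_{L^\infty}\leq \omega'(0)$ after taking suprema via \eqref{deflipvf}.

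For the strict inequality, I argue by contradiction. Suppose $\|\nabla u\|_{L^\infty}=\omega'(0)$. Pick a sequence $(x_n,e_n)$ with $|J_u(x_n)e_n|\to\omega'(0)$. Passing to the limit in the inequality above (which holds uniformly in $x,e$) gives, for every $h>0$,
\[
\omega'(0)\leq \frac{\omega(h)}{h}+\tfrac12 Ch,\qquad\text{i.e.,}\qquad \omega(h)\geq h\omega'(0)-\tfrac12 Ch^2.
\]
Now I exploit $\lim_{\xi\to 0^+}\omega''(\xi)=-\infty$: for any $K>C$, choose $\delta>0$ so small that $\omega$ is $C^2$ on $(0,\delta)$ and $\omega''(\xi)<-K$ there. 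Since $\omega'(\xi)\to\omega'(0)$ as $\xi\to 0^+$, integrating gives $\omega'(\xi)\leq \omega'(0)-K\xi$ on $(0,\delta)$, and integrating once more,
\[
\omega(h)\leq h\omega'(0)-\tfrac12 Kh^2,\qquad h\in(0,\delta).
\]
Comparing with the lower bound forces $K\leq C$, contradicting the choice $K>C$. Therefore $\|\nabla u\|_{L^\infty}<\omega'(0)$.

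The only subtlety I anticipate is the interplay between the piecewise $C^2$ regularity of $\omega$ and the limit $\omega''(\xi)\to-\infty$: one must verify that $\omega$ is genuinely $C^2$ on a whole interval $(0,\delta)$ rather than merely piecewise so, but the very existence of the limit rules out breakpoints accumulating at $0$, so this is harmless. Once that is in place, the proof is a direct quantitative comparison between the quadratic remainder of $u$ and the quadratic defect $h\omega'(0)-\omega(h)$ inherited from the super-concavity of $\omega$ at the origin.
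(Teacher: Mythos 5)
Your proof is correct and rests on the same two pillars as the paper's argument: the Taylor bound $|u(x+he)-u(x)-hJ_u(x)e|\leq C\|D^2u\|_{L^\infty}h^2$ and the super-concavity condition $\omega''(0^+)=-\infty$. The execution differs in the details: you argue by contradiction and integrate $\omega''$ twice to obtain $\omega(h)\leq h\omega'(0)-\tfrac{K}{2}h^2$ for arbitrarily large $K$ on a shrinking interval, whereas the paper argues directly, fixing a small $\xi$ and expanding $\omega(\xi)=\omega(\xi/2)+\tfrac{\xi}{2}\omega'(\xi/2)-\rho(\xi)\xi^2$ with $\rho(\xi)\to\infty$, then using concavity of $\omega$ near $0$ to push $\tfrac{\omega(\xi/2)}{\xi}+\tfrac{\omega'(\xi/2)}{2}+\xi\bigl(C\|\nabla^2u\|-\rho(\xi)\bigr)$ strictly below $\omega'(0)$. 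Your variant is essentially equivalent and arguably a touch more transparent. One small inaccuracy in your closing paragraph: the limit $\omega''(\xi)\to-\infty$ does \emph{not} by itself rule out breakpoints of the piecewise-$C^2$ structure accumulating at $0$ (Definition \ref{defmod} permits this). Fortunately, it does not matter: the jump condition $\omega'(\xi^+)\leq\omega'(\xi^-)$ ensures any such jump only decreases $\omega'$, so $\omega'(\xi)\leq\omega'(0)-K\xi$ and therefore $\omega(h)\leq h\omega'(0)-\tfrac{K}{2}h^2$ remain valid on $(0,\delta)$ by absolute continuity of $\omega$, and your comparison goes through unchanged.
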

\begin{rmk}\label{rmklips}
	The bound $\|\nabla u\|_{L^{\infty}}\leq \omega'(0)$ follows from Definition \ref{defmod} and the Fundamental Theorem of Calculus. The important part is the strict inequality, for which we need $\omega''(0)=-\infty$, and $u\in C^2\cap W^{2,\infty}$.
\end{rmk}
\begin{proof}
Let $x\in\mathbb{R}^d$ be arbitrary, and for any $\xi\in(0,1]$, let $\displaystyle{y=x+\xi e}$, where $e$ is any unit vector. From the first order Taylor expansion of $u$ about $x$ we see that 
\[
|u(y)-u(x)|\geq |J_u(x)e|
\xi-C\xi^2\|\nabla^2u\|,
\]
here $\|\nabla^2u\|$ is just the maximum of all second order derivatives and $C$ is a combinatorial constant. The left hand side is at most $\omega(\xi)$, and so after rearranging we get for any $x\in\rd$, any $\xi \in(0,1]$, and any unit vector $e\in\mathbb{S}^{d-1}$,
\[
		|J_u(x)e|\leq\frac{\omega(\xi)}{\xi}+\frac{C\xi}{2}\|\nabla^2u\|.
\]
As the right hand side is independent on $x$ and $e$ we have
\begin{equation}\label{comp3}
\|\nabla u\|\leq \frac{\omega(\xi)}{\xi}+\frac{C\xi}{2}\|\nabla^2u\|.
\end{equation}
Since $\omega$ is $C^2$ on $(0,\infty)$, and $\displaystyle{\lim_{\xi\rightarrow0^+}\omega''(\xi)=-\infty}$, it follows that
\[
\omega(\xi)=\omega(\xi/2)+\frac{\omega'(\xi/2)}{2}\xi-\rho(\xi)\xi^2,
\]
where $\displaystyle{\lim_{\xi\rightarrow0^+}\rho(\xi)=\infty}$.
Plugging this into \eqref{comp3} we get
\[
\|\nabla u\|\leq\frac{\omega(\xi/2)}{\xi}+\frac{\omega'(\xi/2)}{2}+\xi\left(C\|\nabla^2u\|-\rho(\xi)\right),
\]
The result now follows by choosing $\xi\in(0,1]$ small enough such that $C\|\nabla^2\theta\|-\rho(\xi)<0$ and noting that 
\[
\frac{\omega(\xi/2)}{\xi}+\frac{\omega'(\xi/2)}{2}<\frac{\omega'(0)}{2}+\frac{\omega'(0)}{2}=\omega'(0),
\]
where in the last inequality we used concavity of $\omega$ on some small interval $(0,\delta_0]$, since $\omega''(0^+)=-\infty$ and $\omega$ is piecewise $C^2(0,\infty)$.
\end{proof}
The following Lemma is nothing but a rigorous justification of the consequences of the function
\[
\theta(x)-\theta(y)-\omega(|x-y|)
\]
having a maximum at some $x_0\neq y_0$: one expects first order derivatives to vanish, while the Laplacian to be non-positive. Since $\omega$ is only assumed to be piecewise $C^2$, we have to do this carefully. See also  \cite[Proposition~2.4]{Kiselev2011} and \cite[Lemma~2.3]{Ibdah2020b}. We could prove an analogous vector version of this results, but that won't be necessary. For our purposes, this will do.
\begin{lem}\label{dervh}
	Suppose $\theta$ is $C^2(\mathbb{R}^d)$ is a scalar and has modulus of continuity $\omega$. If $\theta(x_0)-\theta(y_0)=\omega(|x_0-y_0|)$ for some $x_0\neq y_0$ with $x_0-y_0=\xi e_m$, some $m\in\{1,\cdots,d\}$ and $\xi>0$. Then 
	\begin{equation}\label{derv}
	\begin{cases}
		\omega'(\xi^-)\leq\partial_m\theta(x_0)=\partial_m\theta(y_0)\leq\omega'(\xi^+),\\
		\partial_j\theta(x_0)=\partial_j\theta(y_0)=0, \quad j\neq m,
	\end{cases}
	\end{equation}
and 
\begin{equation}\label{lap}
\Delta\theta(x_0)-\Delta\theta(y_0)\leq 4\omega''(\xi^-).
\end{equation}
Furthermore, for any  $\alpha\in(0,1)$, there exists a $C_{\alpha}>0$ such that 
\begin{equation}\label{fraclap}
-\left[(-\Delta)^{\alpha}\theta(x_0)-(-\Delta)^{\alpha}\theta(y_0)\right]\leq D_{\alpha}[\omega](\xi),
\end{equation}
where
\begin{align}\label{defdalpha}
D_{\alpha}[\omega](\xi)=&C_{\alpha}\int_0^{\xi/2}\frac{\omega(\xi+2\eta)+\omega(\xi-2\eta)-2\omega(\xi)}{\eta^{1+2\alpha}}d\eta\nonumber\\
&+C_{\alpha}\int_{\xi/2}^{\infty}\frac{\omega(\xi+2\eta)-\omega(2\eta-\xi)-2\omega(\xi)}{\eta^{1+2\alpha}}d\eta,
\end{align}
\end{lem}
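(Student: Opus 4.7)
The common thread is to exploit that the auxiliary function $\Psi(x,y) := \theta(x) - \theta(y) - \omega(|x-y|)$ satisfies $\Psi \leq 0$ for $x \neq y$ with equality at $(x_0, y_0)$, so $(x_0, y_0)$ is a global maximum of $\Psi$. Each of the three claims then follows by choosing a curve of perturbations $(x(h), y(h))$ through $(x_0, y_0)$ and invoking the corresponding first- or second-order necessary condition. The first two parts will be routine Taylor expansions; the fractional Laplacian estimate is the genuinely delicate one.

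For the derivative identities \eqref{derv}, I hold $y = y_0$ fixed and perturb $x = x_0 + h e_j$. When $j \neq m$, $|x - y_0| = \sqrt{\xi^2 + h^2} = \xi + O(h^2)$, so the linear-in-$h$ part of $\omega(|x - y_0|)$ vanishes and $\Psi(x, y_0) \leq 0 = \Psi(x_0, y_0)$ forces $\partial_j \theta(x_0) = 0$. When $j = m$, $|x - y_0| = |\xi + h|$ for small $|h|$, and one-sided first-order expansions sandwich $\partial_m \theta(x_0)$ between $\omega'(\xi^-)$ and $\omega'(\xi^+)$; combined with the convention $\omega'(\xi^+) \leq \omega'(\xi^-)$ from Definition \ref{defmod}, both one-sided limits must agree, so $\omega$ is in fact differentiable at $\xi$. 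Symmetric perturbations of $y$ give the corresponding statements at $y_0$, while the joint perturbation $x = x_0 + h e_m$, $y = y_0 + h e_m$ (which preserves $|x - y| = \xi$) enforces $\partial_m \theta(x_0) = \partial_m \theta(y_0)$ at first order.

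For the Laplacian bound \eqref{lap}, I combine two second-order computations. The joint same-direction perturbation $x = x_0 + h e_j$, $y = y_0 + h e_j$ preserves $|x - y| = \xi$, so $f(h) := \theta(x) - \theta(y) - \omega(\xi)$ satisfies $f(0) = 0$, $f \leq 0$, and $f'(0) = 0$ by \eqref{derv}; hence $f''(0) \leq 0$ gives $\partial_j^2 \theta(x_0) - \partial_j^2 \theta(y_0) \leq 0$ for every $j$. The joint opposite-direction perturbation along $e_m$, $x = x_0 + h e_m$, $y = y_0 - h e_m$, has $|x - y| = \xi + 2h$; the first-order term at $h = 0$ again vanishes (using step 1 and the $\omega$-differentiability at $\xi$), and the left second-derivative condition $f''(0^-) \leq 0$ yields the sharper $\partial_m^2 \theta(x_0) - \partial_m^2 \theta(y_0) \leq 4 \omega''(\xi^-)$. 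Summing, the $e_m$ direction contributes at most $4\omega''(\xi^-)$ while the $d - 1$ perpendicular directions contribute at most $0$, yielding \eqref{lap}.

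For the fractional Laplacian bound \eqref{fraclap}, I use the symmetric (non-PV) representation to write
\[
(-\Delta)^{\alpha} \theta(y_0) - (-\Delta)^{\alpha} \theta(x_0) = \frac{C_{d,\alpha}}{2} \int_{\rd} \frac{G(z)}{|z|^{d+2\alpha}}\,dz,
\]
where $G(z) = [\theta(x_0+z) - \theta(y_0+z)] + [\theta(x_0-z) - \theta(y_0-z)] - 2\omega(\xi) \leq 0$. The crucial sharpening comes from rearranging $G$ as $[\theta(x_0+z) - \theta(y_0-z)] + [\theta(x_0-z) - \theta(y_0+z)] - 2\omega(\xi)$ and applying the modulus to each cross-pair, yielding $G(z) \leq \omega(|\xi e_m + 2z|) + \omega(|\xi e_m - 2z|) - 2\omega(\xi)$, which is sharpest along the $e_m$ axis. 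I then represent $(-\Delta)^\alpha$ as a spherical average of one-dimensional directional fractional Laplacians, concentrate on the $e_m$ direction, parameterize $z = \eta e_m$ with $\eta > 0$, and split at $\eta = \xi/2$: on $(0, \xi/2)$ the cross-pair bound directly produces the integrand $\omega(\xi+2\eta) + \omega(\xi-2\eta) - 2\omega(\xi)$; on $(\xi/2, \infty)$ a more delicate analysis exploiting the ``reflection'' structure of the pair $(x_0 - \eta e_m, y_0 + \eta e_m)$ (whose separation vector has flipped orientation relative to the pair at $\eta = 0$) produces the odd-extended integrand $\omega(\xi+2\eta) - \omega(2\eta-\xi) - 2\omega(\xi)$, with perpendicular integration absorbed into $C_\alpha$. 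The main obstacle is precisely this third step: deriving the sharper bound for $\eta > \xi/2$ (the naive cross-pair gives $+\omega(2\eta - \xi)$, whereas the lemma has $-\omega(2\eta - \xi)$, so the orientation switch at $\eta = \xi/2$ requires an argument beyond the elementary pair inequality) and cleanly reducing the $d$-dimensional integral to the 1D form of $D_\alpha[\omega](\xi)$ while justifying convergence of all integrals involved.
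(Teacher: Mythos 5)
For \eqref{derv} and \eqref{lap} your argument is essentially the paper's, with a cosmetic difference in the Laplacian step. The paper proves the second-order bound in the distinguished direction by contradiction: assuming $\partial_1^2\theta(x_0)-\partial_1^2\theta(y_0)>4\omega''(\xi^-)$ makes the one-dimensional section $h(s)$ strictly convex near $s_0$, and convexity with $h\leq 0=h(s_0)$ on the left forces $h'(s_0^-)>0$, contradicting the first-order information. You instead invoke the left second-order necessary condition $f''(0^-)\leq 0$ directly, which requires $f'(0^-)=0$, i.e.\ $\partial_m\theta(x_0)=\omega'(\xi^-)$. That equality is legitimate: the sandwich $\omega'(\xi^-)\leq\partial_m\theta(x_0)\leq\omega'(\xi^+)$ together with the hypothesis $\omega'(\xi^+)\leq\omega'(\xi^-)$ from Definition~\ref{defmod} forces $\omega'(\xi^-)=\omega'(\xi^+)$ (this is exactly Remark~\ref{imprmk}), so both routes are valid, and yours is arguably cleaner. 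You should note that the Taylor-expansion step needs $\omega$ to admit a left second derivative at $\xi$, which the piecewise-$C^2$ hypothesis supplies. For the perpendicular directions and the first-order identities your perturbation choices differ from the paper's only in ordering and bookkeeping ($d_\epsilon^\pm$, $d_{\epsilon,j}^\pm$), not in substance.

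For \eqref{fraclap} you have correctly diagnosed the obstacle but not resolved it, and you say so explicitly. After symmetrizing the fractional Laplacian and applying the modulus to each cross-pair, one gets $\omega(\xi+2\eta)+\omega(|\xi-2\eta|)-2\omega(\xi)$, which for $\eta>\xi/2$ reads $\omega(\xi+2\eta)+\omega(2\eta-\xi)-2\omega(\xi)$; the lemma asserts the strictly sharper $\omega(\xi+2\eta)-\omega(2\eta-\xi)-2\omega(\xi)$, a gain of $2\omega(2\eta-\xi)$ that genuinely does not follow from the elementary pair inequality and requires exploiting the extremality of $(x_0,y_0)$ in a more refined way (together with a reduction of the $d$-dimensional integral to the one-dimensional form of $D_\alpha[\omega]$). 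This is a real gap in your write-up. However, the paper's own proof of Lemma~\ref{dervh} does not close it either: it states ``We will only prove \eqref{derv} and \eqref{lap}. We refer the reader to \cite{Kiselev2011} for the proof of \eqref{fraclap} with \eqref{defdalpha}.'' So your proposal covers exactly the same ground as the paper's proof; what remains is the argument from the cited reference, which you have honestly flagged as missing rather than asserted.
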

\begin{proof}
We will only prove \eqref{derv} and \eqref{lap}.  We refer the reader to \cite{Kiselev2011} for the  proof of \eqref{fraclap} with \eqref{defdalpha}. Since moduli of continuity are not sensitive to distance preserving maps, we may assume that $m=1$. We start by showing $\partial_j\theta(x_0)=\partial_j\theta(y_0)$ and $\partial^2_j\theta(x_0)-\partial^2_j\theta(y_0)\leq0$ any $j\in\{1,\cdots,d\}$. Let $\epsilon>0$ and define
\begin{align*}
&d_\epsilon^+:=\theta(x_0+\epsilon e_j)-\theta(y_0+\epsilon e_j)-\left[\theta(x_0)-\theta(y_0)\right],\\
&d_\epsilon^-:=\theta(x_0)-\theta(y_0)+\left[\theta(y_0-\epsilon e_j)-\theta(x_0-\epsilon e_j)\right]	,\\
&d_\epsilon:=\left[\theta(x_0+\epsilon e_j)-2\theta(x_0)+\theta(x_0-\epsilon e_j)\right]-\left[\theta(y_0+\epsilon e_j)-2\theta(y_0)+\theta(y_0-\epsilon e_j)\right],
\end{align*}
	where $\{e_j\}_{j=1}^d$ is the standard unit basis of $\mathbb{R}^d$. It is sufficient to show $d_\epsilon^+\leq0$, $d_\epsilon^-\geq0$ and $d_\epsilon\leq0$. But this follows immediately from the fact that $\theta(x_0)-\theta(y_0)=\omega(\xi)$ and $|\theta(x)-\theta(y)|\leq\omega(|x-y|)$ for any $x,y$. 
	
Next, we show that $\partial_j\theta(x_0)=0$ for $j\neq 1$ and $\omega'(\xi^-)\leq\partial_1\theta(x_0)\leq\omega'(\xi^+)$. For $j\in\{1,\cdots,d\}$, we define
\begin{align*}
&d_{\epsilon,j}^+:=\theta(x_0+\epsilon e_j)-\theta(x_0)=\theta(x_0+\epsilon e_j)-\theta(y_0)-\omega(\xi),\\
&d_{\epsilon,j}^-:=\theta(x_0)-\theta(x_0-\epsilon e_j)=\omega(\xi)+\theta(y_0)-\theta(x_0-\epsilon e_j).
\end{align*}
Notice that for $j=1$, we have $|x_0+\epsilon e_1-y_0|=\xi+\epsilon$, and $|y_0-x_0+\epsilon e_1|=\xi-\epsilon$ whenever $\epsilon\in(0,\xi/2)$, while for $j>1$, $|x_0+\epsilon e_j-y_0|=|y_0-x_0+\epsilon e_j|=\sqrt{\xi^2+\epsilon^2}$. Hence,
\begin{align}
&d_{\epsilon,j}^+\leq 
\begin{cases}
	\omega(\xi+\epsilon)-\omega(\xi), &j=1,\\
	\omega(\sqrt{\xi^2+\epsilon^2})-\omega(\xi), &j>1
\end{cases},\label{depsplus}\\
&d_{\epsilon,j}^-\geq 
\begin{cases}
	\omega(\xi)-\omega(\xi-\epsilon), &j=1,\\
	\omega(\xi)-\omega(\sqrt{\xi^2+\epsilon^2}), &j>1
\end{cases}\label{depsminus}	,
\end{align}
from which \eqref{derv} follows immediately upon dividing \eqref{depsplus} and \eqref{depsminus} by $\epsilon>0$ and letting $\epsilon\rightarrow 0^+$, since $\omega$ is continuous and have one-sided derivatives. Finally, let $x_0=(s_0,x')$, $y_0=(s_0',x')$ where $s_0,s_0'\in\mathbb{R}$ are the first coordinates, and $x'\in\mathbb{R}^{d-1}$ are the other coordinates, and define
 \[
 h(s):=\theta(s,x')-\theta(s_0+s_0'-s,x')-\omega(2s-s_0-s_0'),\ s>\frac{s_0+s_0'}{2}.
 \]
Suppose for the sake of contradiction that $\partial^2_1\theta(x_0)-\partial^2_1\theta(y_0)>4\omega''(\xi^-)$. As $\omega$ is piecewise $C^2$, it follows that there exists some small enough $\epsilon>0$ such that $h$ is $C^2$ on $[s_0-\epsilon,s_0]$ and $-h''(s)<0$ on that interval. On the one hand, a Lemma of Hopf (or simple calculus) tells us that we must have $h'(s_0^{-})>0$. On the other hand, owing to \eqref{derv}, we must have
\[
h'(s_0^{-})=2\left(\partial_1\theta(x_0)-\omega'(\xi^-)\right)\leq 2\left(\omega'(\xi^+)-\omega'(\xi^-)\right),
\]
 which leads to a contradiction under the assumption $\omega'(\xi^+)\leq\omega'(\xi^-)$.
\end{proof}
\begin{rmk}\label{imprmk}
We emphasize that under the assumption that $\omega'(\xi^+)\leq\omega'(\xi^-)$, we see from \eqref{derv} that the equality $\theta(x_0)-\theta(y_0)=\omega(\xi)$ cannot happen if $\xi$ is a point of jump discontinuity of $\omega'$, since $\theta$ is smooth.
\end{rmk}

\section{Local Continuity Estimates}\label{seccontest}
As mentioned earlier, it was observed by Silvestre \cite{Silvestre2010unpub} that $C^{\beta}$ incompressible vector-fields have $C^{2\beta}$ scalar pressures. When $\beta\in(1/2,1)$, this translates to a H\"older condition on the gradient of the pressure. Similar results were reported by Constantin \cite{Constantin2014}, Isett \cite{isett2013regularity} (Isett and Oh \cite{IO2016}) as well as De Lellis and Sz\'ekelyhidi Jr. \cite{DS2014}. See also Colombo and De Rosa \cite{CDR2020} for further results among those lines. Constantin's proof is based on some local formulae regarding spatial averages of the pressure term, and is somewhat related to Silvestre's argument. Isett's proof is based on Littlewood-Paley decompositions, while De Lellis and Szek\'ekelyhidi Jr. observed this naturally in the context of convex integration. The key observation made by Silvestre to obtain such an estimate is the following identity valid for a smooth enough, divergence-free vector field $u$:
\begin{equation}\label{incomptrick}
\sum_{i,j=1}^d\partial_{y_i}\partial_{y_j}\left[u_i(x-y)u_j(x-y)\right]=\sum_{i,j=1}^d\partial_{y_i}\partial_{y_j}\left[\left(u_i(x-y)-u_i(x)\right)\left(u_j(x-y)-u_j(x)\right)\right].
\end{equation} 
Here $x\in\rd$ is fixed, and we used the notation $\partial_{z_k}[\theta(z)]$ to mean the derivative of $\theta:\rd\rightarrow\mathbb{R}$ with respect to the variable $z_k$, i.e. $\partial_{z_k}[\theta(z)]=\partial_k\theta(z)$, where $\partial_k\theta(z)$ is understood as $\nabla\theta\cdot e_k$ evaluated at  $z$. This notation will be used throughout the proof of the following Lemma, a generalization of Silvestre's argument and an extension to the periodic setting. Identity \eqref{incomptrick} is utilized as follows: from the fact that the pressure solves
\begin{equation}\label{elliptequppf}
-\Delta p=\sum_{i,j}\partial_i\partial_j(u_iu_j),
\end{equation}
a pointwise representation in terms of the fundamental solution to Laplace's equation in $\rd$ is obtained, and then the H\"older seminorms of $p$ and $\nabla p$ are controlled in terms of the H\"older seminorms of $u$ via utilizing the incompressibility trick \eqref{incomptrick}. We remind the reader that we are identifying functions belonging to $\mathbb{V}^k$ by functions that are defined on $\rd$ and are $L-$ periodic in every direction.
\begin{lem}\label{pressest}
	Let $u$ and $b$ be continuous divergence-free vector fields. Suppose further that either $u,b\in C_{per}(\rd)$ or $b\in L^{q}(\rd)$ and $u\in L^{\infty}(\rd)$ for some $q\in(1,\infty)$. Assume $u$ and $b$ have moduli of  continuity $\omega_{u}$ and $\omega_{b}$ respectively. If 
	\begin{equation}\label{defp}
	p:=\sum_{i,j=1}^dR_iR_j(b_iu_j),
	\end{equation}
where $\{R_j\}_{j=1}^d$ are the Riesz transforms, then $p\in C^1(\rd)$ and $\nabla p$ has modulus of continuity
\begin{equation}\label{modgradp}
\widetilde\omega(\xi):=C_d\left[\int_0^\xi\frac{\omega_{b}(\eta)\omega_{u}(\eta)}{\eta^2}d\eta+\omega_{b}(\xi)\int_{\xi}^\infty\frac{\omega_{u}(\eta)}{\eta^2}\ d\eta+\omega_{u}(\xi)\int_{\xi}^\infty\frac{\omega_{b}(\eta)}{\eta^2}\ d\eta\right],
\end{equation}
where $C_d$ is a positive, absolute universal constant depending only on the spatial dimension $d\geq3$ but not on any norm of $u$, $b$ or $p$ (provided the integrals converge).
\end{lem}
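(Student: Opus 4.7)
The strategy is to exploit Silvestre's incompressibility identity \eqref{incomptrick} to derive a pointwise representation for $\nabla p$ whose kernel is homogeneous of degree $-d-1$ but whose integrand vanishes quadratically at the base point, making the integral absolutely convergent. Starting from $-\Delta p = \sum_{i,j}\partial_i\partial_j(b_i u_j)$, the divergence-free conditions $\nabla\cdot b = \nabla\cdot u = 0$ yield, for any fixed $y\in\rd$,
\begin{equation*}
	-\Delta p(w) = \sum_{i,j=1}^d \partial_{w_i}\partial_{w_j}\bigl[(b_i(w)-b_i(y))(u_j(w)-u_j(y))\bigr],
\end{equation*}
because the three extra pieces produced by expanding the product are annihilated either by $\sum_j\partial_{w_j}u_j = 0$, by $\sum_i\partial_{w_i}b_i = 0$, or are simply constant. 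Convolving against the Newton potential $\Phi$ ($\Phi(z)=c_d|z|^{2-d}$ for $d\geq 3$), transferring both $w$-derivatives onto $\Phi$ by integration by parts, and then applying $\partial_{y_k}$ gives
\begin{equation*}
	\partial_k p(y) = \sum_{i,j=1}^d \int_{\rd}[\partial_k\partial_i\partial_j\Phi](y-w)\,[b_i(w)-b_i(y)][u_j(w)-u_j(y)]\,dw.
\end{equation*}
In the whole-space case (B), this is justified by mollification plus standard Calder\'on--Zygmund bounds (using $b\in L^q$, $u\in L^\infty$); in the periodic case (A) one either works on the Fourier side via the Riesz multiplier symbols, or periodizes the kernel as in \eqref{ptwiseNp}, with absolute convergence ensured by $|[\partial_k\partial_i\partial_j\Phi](z)|\lesssim |z|^{-d-1}$.

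Setting $\xi=|x-z|$ and $x_m=(x+z)/2$, I split $\partial_k p(x)-\partial_k p(z)$ into near-field ($|w-x_m|\leq 2\xi$) and far-field ($|w-x_m|>2\xi$) contributions. For the near field, I apply the representation separately with $y=x$ and $y=z$, bounding each via $|[\partial_k\partial_i\partial_j\Phi](y-w)|\leq C_d|y-w|^{-d-1}$ together with $|[b_i(w)-b_i(y)][u_j(w)-u_j(y)]|\leq\omega_b(|w-y|)\omega_u(|w-y|)$. Since $\{|w-x_m|\leq 2\xi\}\subset\{|w-y|\leq 3\xi\}$ for $y\in\{x,z\}$, polar coordinates around $y$ yield a near-field contribution of order $\int_0^{3\xi}\omega_b(\eta)\omega_u(\eta)\eta^{-2}\,d\eta$, whose excess range $[\xi,3\xi]$ is reabsorbed into the far-field terms by the monotonicity of the moduli.

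For the far field, writing $F_y(w):=[b(w)-b(y)]\otimes[u(w)-u(y)]$ and $K:=\partial_k\partial_i\partial_j\Phi$, I decompose
\begin{equation*}
	K(x-w)F_x(w)-K(z-w)F_z(w) = [K(x-w)-K(z-w)]F_z(w) + K(x-w)[F_x(w)-F_z(w)],
\end{equation*}
and expand the second factor algebraically:
\begin{align*}
	(F_x-F_z)_{ij} = &-[b_i(w)-b_i(x)][u_j(x)-u_j(z)] - [b_i(x)-b_i(z)][u_j(w)-u_j(x)]\\
	&- [b_i(x)-b_i(z)][u_j(x)-u_j(z)].
\end{align*}
Combined with $|K(x-w)|\leq C_d|w-x|^{-d-1}$ and radial integration over the far field, this produces $\omega_u(\xi)\int_\xi^\infty\omega_b(\eta)\eta^{-2}\,d\eta + \omega_b(\xi)\int_\xi^\infty\omega_u(\eta)\eta^{-2}\,d\eta$ plus a harmless residue of size $\omega_b(\xi)\omega_u(\xi)/\xi$ from the constant piece. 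The kernel-difference term is controlled by the mean value estimate $|K(x-w)-K(z-w)|\leq C_d\xi|w-x_m|^{-d-2}$ (valid for $|w-x_m|\geq 2\xi$), giving $\xi\int_\xi^\infty\omega_b(\eta)\omega_u(\eta)\eta^{-3}\,d\eta$; this reduces to a cross-product term via the concavity-driven bound $\omega(\eta)/\eta\leq\omega(\xi)/\xi$ for $\eta\geq\xi$, a consequence of $\omega'(\xi^+)\leq\omega'(\xi^-)$ in Definition \ref{defmod} combined with $\omega(0)=0$ for continuous functions.

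The main obstacle is the rigorous derivation of the pointwise representation formula, especially the integration by parts that shifts both $w$-derivatives onto the Newton potential: individual pieces of the expanded product yield only conditionally convergent principal value integrals, so one must work at the level of the full sum (or mollify first, derive the formula for smooth compactly-supported approximations, and then pass to the limit). The periodic case raises the additional issue of periodizing a kernel of homogeneity $-d-1$, though the stronger-than-CZ decay actually simplifies the convergence, and the Fourier-side definition in \eqref{defNp} provides a clean alternative route. All remaining estimates are essentially radial integration with careful constant bookkeeping.
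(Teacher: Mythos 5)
Your representation formula, the use of Silvestre's incompressibility identity \eqref{incomptrick}, and the mollification/cutoff route to justifying it (for the whole-space and periodic cases respectively) all match the paper's proof in substance. The divergence is in how you split $\partial_k p(x)-\partial_k p(z)$: you compare the two integrals at a common absolute coordinate $w$, which forces the kernel to appear as $K(x-w)$ in one term and $K(z-w)$ in the other, and hence produces a kernel-difference term $[K(x-w)-K(z-w)]F_z(w)$ that the paper never sees. The paper instead performs the change of variables so that both gradients are written as integrals against $\partial_k\partial_i\partial_j\Phi(y)$ in the shift variable $y$ with $\varphi_{i,j}(x,y)$ and $\varphi_{i,j}(z,y)$ as the integrands; the difference of integrands is then bounded pointwise by $2\omega_b(|y|)\omega_u(|y|)$ on $|y|\leq\xi$ and by $2\omega_b(\xi)\omega_u(|y|)+2\omega_b(|y|)\omega_u(\xi)$ on $|y|>\xi$ (an algebraic telescoping), and integrating gives \eqref{modgradp} directly with no leftover terms.

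The kernel-difference term you introduced is where the gap lies. You bound it by $\xi\int_\xi^\infty\omega_b(\eta)\omega_u(\eta)\eta^{-3}\,d\eta$ and then invoke $\omega(\eta)/\eta\leq\omega(\xi)/\xi$ for $\eta\geq\xi$, claiming this follows from $\omega'(\xi^+)\leq\omega'(\xi^-)$ together with $\omega(0)=0$. That inference is false: the condition $\omega'(\xi^+)\leq\omega'(\xi^-)$ in Definition \ref{defmod} only constrains the one-sided derivatives at kink points and says nothing about $\omega$ being concave between kinks (the paper explicitly states that the usual concavity assumption has been \emph{dropped}). For example, $\omega$ equal to $\xi$ on $[0,1]$, to $\xi+(\xi-1)^{3/2}/100$ on $[1,2]$, and smoothly matched to a bounded concave function thereafter is a valid modulus of continuity per Definition \ref{defmod} with all integrals in \eqref{modgradp} convergent, yet $\omega(\eta)/\eta$ increases near $\eta=1^+$, so your bound fails. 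The same implicit use of concavity/doubling appears when you claim the near-field excess range $[\xi,3\xi]$ is "reabsorbed by monotonicity": reducing $\omega_b(3\xi)$ to a constant times $\omega_b(\xi)$ requires a doubling estimate such as $\omega(3\xi)\leq 3\omega(\xi)$, which again is a consequence of concavity plus $\omega(0)\geq 0$, not of Definition \ref{defmod} alone. Both difficulties vanish if you first change variables to the shift $y$ as in the paper, so that the near/far split is simply $|y|\leq\xi$ versus $|y|>\xi$ and the kernel is evaluated at the same argument in both integrals.
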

\begin{rmk}
If $b=u$ was H\"older continuous with exponent $\beta\in(1/2,1)$, then we recover Silvestre's estimate \cite{Silvestre2010unpub} by choosing $\omega_{b}(\xi)=\omega_{u}(\xi)=[u]_{C^{0,\beta}}\xi^{\beta}$, where $[u]_{C^{0,\beta}}$ is the H\"older semi-norm of $u$. Further, one may allow different components of the vector fields $u$ and $b$ to have different moduli of continuity, but this doesn't seem to be helpful here so we just assume the whole vector field obeys one modulus of continuity.
\end{rmk}
\begin{rmk}
The above Lemma is applicable for Leray-Hopf weak solutions that obey moduli of continuity, \emph{provided the integrals converge}. It was shown in \cite{Constantin2001, Constantin2014, FGT1981} that Leray-Hopf solutions reside in $L^1([0,T];L^{\infty}(\rd))$ meaning that we can apply this Lemma with $b(t,\cdot)=u(t,\cdot)\in L^2(\rd)\cap L^{\infty}(\rd)$ for almost every $t\in[0,T]$. One can also certainly allow for $u$ and $b$ to be in different $L^{q}$ spaces, but we prefer to keep the presentation simple without sacrificing rigour. 
\end{rmk}

\begin{proof}
In what follows, $C_d$ is nothing but an absolute universal constant depending only on the dimension $d\geq3$ and whose value may change from line to line. As Riesz transforms are bounded on $L^q$ spaces, $q\in(1,\infty)$, see for instance \cite{Stein1970book} for the whole space setting, \cite{CZ1954} and \cite{SWbook1971} for periodic functions, $p$ in \eqref{defp} is well defined under our hypotheses. The first step is to show that $p\in C^1(\rd)$ and we have the representation  
\begin{equation}\label{gradp}
\partial_k p(x)=\sum_{i,j=1}^d\int_{\rd}\partial_k\partial_i\partial_j\Phi(y)\varphi_{i,j}(x,y)\ dy, \quad \forall k\in\{1,\cdots,d\},
\end{equation}
where $\Phi(y):=C_d|y|^{2-d}$ is the fundamental solution to Laplace equation in $d\geq3$ and 
\begin{equation}\label{varphi}
\varphi_{i,j}(x,y):=\left(b_i(x-y)-b_i(x)\right)\left(u_j(x-y)-u_j(x)\right),\quad (x,y)\in\rd\times\rd.
\end{equation}
Note that since $|\partial_k\partial_i\partial_j\Phi(y)|\leq C_d|y|^{-d-1}$, the integral \eqref{gradp} always converges under the hypotheses of Lemma \ref{pressest}. To see this, note that if the first integral appearing in \eqref{modgradp} converges, then the singularity at 0 in \eqref{gradp} is integrable. On the other hand, in the periodic case, $\varphi_{i,j}\in L^{\infty}(\rd\times\rd)$, so that the tail end of integral \eqref{gradp} is finite, while under the assumption that $b\in L^{q}(\rd)$, $u\in L^{\infty}(\rd)$ for some $q\in(1,\infty)$, the tail end of the integral is convergent since the product $b_iu_j\in L^q(\rd)$ and $\partial_k\partial_i\partial_j\Phi\in L^q(A)$, any $q\in[1,\infty]$ where $A:=\{y\in\rd:|y|\geq1\}$. 

We get that $p\in C^1(\rd)$ immediately provided we show that the distributional derivative of $p$ has the representation \eqref{gradp}, since the right-hand side of \eqref{gradp} is continuous. To that extent, we start with the case when $b\in L^{q}(\rd)$ and $u\in L^{\infty}(\rd)$ for some $q\in(1,\infty)$. It follows that $p$ as defined in \eqref{defp} is in $L^{q}(\rd)$. We now let $\epsilon>0$ be small, obtain smooth, divergence-free vector-fields $b^{\epsilon}$ and $u^{\epsilon}$ such that $b^{\epsilon}\rightarrow b$ in $L^{q}(\rd)$, $u^{\epsilon}(x)\rightarrow u(x)$ pointwise, assume $b^{\epsilon}$ is compactly supported while $u^{\epsilon}$ is uniformly bounded in $\epsilon>0$ and we define
\[
p^{\epsilon}:=\sum_{i,j=1}^dR_iR_j(b_i^{\epsilon}u_j^{\epsilon}).
\] 
Moving on, we drop the summation for convenience. It follows that $p^{\epsilon}$ is smooth and solves \eqref{elliptequppf} (with one of the factors chosen as $u=u_\epsilon$ and the other factor being $b_\epsilon$). Since the product $b^{\epsilon}_iu^{\epsilon}_j$ is compactly supported, we therefore have the representation 
\[
p^{\epsilon}(x)=\int_{\rd}\Phi(y)\partial_{y_i}\partial_{y_j}(b^{\epsilon}_i(x-y) u^{\epsilon}_j(x-y))\ dy=\int_{\rd}\Phi(y)\partial_{y_i}\partial_{y_j}\left[\varphi^{\epsilon}_{i,j}(x,y)\right]\ dy,
\]
where $\varphi^{\epsilon}_{i,j}$ is as in \eqref{varphi} and we used the incompressibility trick \eqref{incomptrick} (with one of the $u$ factors replaced by $b$). Thus, the representation \eqref{gradp} follows immediately for $\partial_k p^{\epsilon}$ via differentiating under the integral and integrating by parts. Finally, one can readily verify that up to a subsequence, $\partial_kp^{\epsilon}$ converges pointwise almost everywhere to the representation \eqref{gradp} as $\epsilon\rightarrow 0$, and since $p^{\epsilon}$ converges to $p$ in $L^{q}(\rd)$, we get that the distributional derivative can be represented by \eqref{gradp}, meaning $p\in C^1(\rd)$.

Now on to the periodic case. The approximating sequence is simpler: we just assume that $u,b$ are divergence-free trigonometric polynomials. The complication now is that we cannot justify convolving with $\Phi$ directly. However, we are interested in the pointwise representation \eqref{gradp}, and as discussed above, this integral is absolutely convergent. Therefore, we proceed by cutting off the part at infinity by a smooth function, obtain the required representation and then pass to the limit. To that end, let $\chi:\rd\rightarrow[0,1]$ be a smooth radially symmetric function such that $\chi(z)=1$ for $|z|\leq 1$ and  $\chi(z)=0$ for $|z|\geq 2$, and for $R>0$, set $\chi_R(y):=\chi(y/R)$. Fix $x\in\mathbb{R}^d$ and chose $R\geq2|x|+1$. Noting that since $|\nabla\Phi(\lambda)|\leq C_d |\lambda|^{1-d}$ and $|\nabla^2\Phi(\lambda)|\leq C_d|\lambda|^{-d}$ for $\lambda\neq 0$, while all derivatives of $\chi$ are supported in the shell $1\leq |y|\leq2$, we can bound, for any $f\in L^{\infty}(\rd)$, $m,n,k\in\{1,\cdots,d\}$
\begin{align}
&\left|\int_{\mathbb{R}^d}\partial_{k}\Phi(x-y)\partial_m\partial_n\chi(y/R)f(y)\ dy \right|\leq C_d \|f\|_{L^{\infty}}\int_{R\leq|y|\leq2R}|x-y|^{1-d}\ dy\leq C_d\|f\|_{L^{\infty}}R,\label{phi'}\\
&\left|\int_{\rd}\partial_k\partial_n\Phi(x-y)\partial_m\chi(y/R)f(y)\ dy\right|\leq C_d\|f\|_{L^{\infty}}\int_{R\leq|y|\leq2R}|x-y|^{-d}\ dy\leq C_d\|f\|_{L^{\infty}}\label{phi''}.
\end{align}Next, let $p_R(y):=p(y)\chi_R(y)$, and note that
\[
-\Delta p_R(y)=\partial_i\partial_j(b_iu_j)\chi_R(y)-2\nabla p\cdot\nabla\chi_R(y)-p\Delta\chi_R(y),
\]
so that upon multiplying the above equation by $\Phi(x-y)$ and integrating the left-hand side by parts, we get (since $p_R\in C^2(\rd)$ and is compactly supported)
\begin{align*}
p_R(x)=&\int_{\mathbb{R}^d}\Phi(x-y)\partial_{i}\partial_{j}(b_iu_j)(y)\chi_R(y)\ dy-2R^{-1}\sum_{l=1}^d\int_{\mathbb{R}^d}\Phi(x-y)\partial_{l}p(y)\partial_{l}\chi(y/R)\ dy\\
&-R^{-2}\int_{\mathbb{R}^d}\Phi(x-y)p(y)\Delta\chi(y/R)\ dy,
\end{align*}
meaning 
\begin{align*}
\partial_kp_R(x)=&\int_{\mathbb{R}^d}\partial_k\Phi(x-y)\chi_R(y)\partial_{i}\partial_{j}(b_iu_j)(y)\ dy
-2R^{-1}\sum_{l=1}^d\int_{\mathbb{R}^d}\partial_{k}\Phi(x-y)\partial_{l}\chi(y/R)\partial_{y_l}[p(y)]\ dy\\
&-R^{-2}\int_{\mathbb{R}^d}\partial_{k}\Phi(x-y)\Delta\chi(y/R)p(y)\ dy.
\end{align*}
Let us start with integrating the second integral by parts, transferring the derivative from the pressure to the other terms to get
\[
\int_{\mathbb{R}^d}\partial_{k}\Phi(x-y)\partial_{l}\chi(y/R)\partial_{y_l}[p(y)]\ dy=\int_{\mathbb{R}^d}\left[\partial_{l}\partial_{k}\Phi(x-y)\partial_{l}\chi(y/R)-R^{-1}\partial_{k}\Phi(x-y)\partial_{l}^2\chi(y/R)\right]p(y)\ dy,
\]
making 
\begin{align}\label{gradp1}
\partial_kp_R(x)=&\int_{\mathbb{R}^d}\partial_k\Phi(x-y)\chi_R(y)\partial_i\partial_j(b_iu_j)(y)\ dy
-2R^{-1}\sum_{l=1}^d\int_{\mathbb{R}^d}\partial_l\partial_{k}\Phi(x-y)\partial_{l}\chi(y/R)p(y)\ dy\nonumber\\
&+R^{-2}\int_{\mathbb{R}^d}\partial_{k}\Phi(x-y)\Delta\chi(y/R)p(y)\ dy=I_1(x)+I_2(x)+I_3(x).
\end{align}
For fixed $x\in\rd$, we say $\theta(x)=O(1/R)$ if there  exists a constant $C$ such that $|\theta(x)|\leq CR^{-1}$ for any $R\geq2|x|$. That being said, by virtue of \eqref{phi'}-\eqref{phi''} and the fact that $p\in C^2_{per}(\rd)\subset L^{\infty}(\rd)$, we see that $I_2(x)$ and $I_3(x)$ are both $O(1/R)$. For $I_1(x)$, we first invoke a change of variables, use the incompressibility trick \eqref{incomptrick}, and then integrate by parts twice to get 
\[
I_1(x)=\int_{\mathbb{R}^d}\partial_k\Phi(y)\chi_R(x-y)\partial_{y_i}\partial_{y_j}[\varphi_{i,j}(x,y)]\ dy=\int_{\mathbb{R}^d}\partial_i\partial_j\partial_k\Phi(y)\varphi_{i,j}(x,y)\chi_R(x-y)\ dy+O(1/R)
\]
where $\varphi_{i,j}(x,y)$ is as defined in \eqref{varphi} and we again used calculations similar to \eqref{phi'}-\eqref{phi''} to conclude that the leftover terms from integration by parts are $O(1/R)$. Therefore, we get for any $x\in\mathbb{R}^d$,
\[
\partial_kp_R(x)=\int_{\mathbb{R}^d}\partial_k\partial_i\partial_j\Phi(y)\varphi_{i,j}(x,y)\chi_R(x-y)\ dy+O(1/R),
\]
from which \eqref{gradp} follows by passing to the limit $R\rightarrow\infty$. 

Now that we proved the representation \eqref{gradp} holds true in both the periodic and whole space scenario, we proceed to obtaining estimate \eqref{modgradp}. We let $(x,z)\in\rd\times\rd$, $\xi:=|x-z|$, and write 
\begin{align*}
|\partial_k p(x)-\partial_k p(z)|&=\sum_{i,j=1}^d\left|\int_{\rd}\partial_k\partial_i\partial_j \Phi(y)\left(\varphi_{i,j}(x,y)-\varphi_{i,j}(z,y)\right)dy \right|\\
&=\sum_{i,j=1}^d\left|\int_{|y|\leq\xi}\partial_k\partial_i\partial_j \Phi(y)\left(\varphi_{i,j}(x,y)-\varphi_{i,j}(z,y)\right)dy\right.\\
&\quad \quad \quad+\left.\int_{|y|>\xi}\partial_k\partial_i\partial_j \Phi(y)\left(\varphi_{i,j}(x,y)-\varphi_{i,j}(z,y)\right)dy\right|.
\end{align*}
Now, for $|y|\leq\xi$, we bound 
\[
\sum_{i,j=1}^d\left|\partial_k\partial_i\partial_j \Phi(y)\left(\varphi_{i,j}(x,y)-\varphi_{i,j}(z,y)\right)\right|\leq C_d|y|^{-d-1}\omega_b(|y|)\omega_u(|y|),
\]
and so the first integral can be estimated from above by 
\[
C_d\int_0^\xi\frac{\omega_{b}(\eta)\omega_{u}(\eta)}{\eta^2}\ d\eta.
\]
For the second integral, we first write
\begin{align*}
\varphi_{i,j}(x,y)=&\left[b_i(x-y)-b_i(z-y)+b_i(z)-b_i(x)\right]\left[u_j(x-y)-u_j(x)\right]\\
&+\left[b_i(z-y)-b_i(z)\right]\left[u_j(x-y)-u_j(z-y)+u_j(z)-u_j(x)\right]+\varphi_{i,j}(z,y),
\end{align*}
making
\[
\frac{1}{2}\sum_{i,j=1}^d|\varphi_{i,j}(x,y)-\varphi_{i,j}(z,y)|\leq d\omega_{b}(|x-z|)\omega_{u}(|y|)+d\omega_{b}(|y|)\omega_{u}(|x-z|),
\]
and so the second integral is dominated by
\[
C_d\omega_{b}(\xi)\int_{\xi}^\infty\frac{\omega_{u}(\eta)}{\eta^2}\ d\eta+C_d\omega_{u}(\xi)\int_{\xi}^\infty\frac{\omega_{b}(\eta)}{\eta^2}\ d\eta,
\]
giving us precisely \eqref{modgradp}.
\end{proof}
The following Lemma should not be surprising and is a slight generalization of the argument presented in the Appendix of \cite{KNV2007}, based on the fact that singular integral operators of order zero preserve H\"older continuity.
\begin{lem}\label{sioholder}
Suppose that $K$ is a kernel as in \eqref{defkern} with $\Phi\in C^{0,\rho}(\mathbb{S}^{d-1})$ for some $\rho\in(0,1]$ satisfying the zero average condition \eqref{zeroavg}. Let $\mathcal{N}$ be the singular integral operator given by \eqref{ptwiseN} and let $\mathcal{N}_p$ be its periodization \eqref{defNp} or \eqref{ptwiseNp}. Suppose that $\theta:\rd\rightarrow\mathbb{R}$ has modulus of continuity $\omega$. If $\theta\in L^p(\rd)$ some $p\in(1,\infty)$ (resp. periodic with zero average) then $\mathcal{N}\theta$ (resp. $\mathcal{N}_p\theta$) has modulus of continuity
\begin{equation}\label{modNL}
	\widetilde\omega(\xi):=C_{K,d}\left[\int_0^{3\xi}\frac{\omega(\eta)}{\eta}\ d\eta+\xi^{\rho}\int_{3\xi}^{\infty}\frac{\omega(\eta)}{\eta^{1+\rho}}\ d\eta\right],
\end{equation}
where $C_{K,d}>0$ is some constant that depends only on the kernel $K$ and dimension $d$.
\end{lem}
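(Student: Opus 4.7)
My plan is to adapt the standard Calderón--Zygmund-type argument, splitting the integral into a near and a far part at radius $3\xi$ where $\xi=|x-y|$. For definiteness I would treat the whole-space case first, and obtain the periodic case by applying the same argument to the absolutely convergent representation \eqref{ptwiseNp}.

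The starting point is the identity
\[
\mathcal{N}\theta(x)-\mathcal{N}\theta(y)=P.V.\int_{\rd}K(z)\bigl[\theta(x-z)-\theta(y-z)\bigr]\,dz.
\]
The zero-average condition \eqref{zeroavg} translates into the annular cancellation $\int_{a<|z|<b}K(z)\,dz=0$, which allows me to subtract $\theta(x)$ and $\theta(y)$ freely inside the principal value and rewrite the integrand as $K(z)\{[\theta(x-z)-\theta(x)]-[\theta(y-z)-\theta(y)]\}$. Set $h:=x-y$ and split at $|z|=3\xi$.

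On the near part $\{|z|<3\xi\}$, each bracket is dominated by $\omega(|z|)$ by the modulus-of-continuity hypothesis, and $|K(z)|\le\|\Phi\|_{L^\infty(\mathbb{S}^{d-1})}|z|^{-d}$. Polar coordinates immediately yield the contribution
\[
\Bigl|\int_{|z|<3\xi}K(z)\{\cdots\}\,dz\Bigr|\le 2\int_{|z|<3\xi}|K(z)|\,\omega(|z|)\,dz\le C_{K,d}\int_0^{3\xi}\frac{\omega(\eta)}{\eta}\,d\eta,
\]
giving the first integral in \eqref{modNL}. For the far part I reassemble the two pieces into a kernel difference: the substitution $w=z-h$ in the $y$-integral gives $\int K(z)\theta(y-z)\,dz=\int K(w-h)\theta(x-w)\,dw$, and on the common region (whose symmetric difference is contained in the annulus $2\xi\le|z|\le 4\xi$ and contributes a term of the same size as the near estimate) one obtains
\[
\int_{|z|\ge 3\xi}\bigl[K(z)-K(z-h)\bigr]\bigl[\theta(x-z)-\theta(x)\bigr]\,dz,
\]
after using annular cancellation once more to insert $-\theta(x)$. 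The key estimate now is the standard kernel-H\"older bound
\[
|K(z)-K(z-h)|\le C_{K,d}\,\frac{|h|^{\rho}}{|z|^{d+\rho}},\qquad |h|\le|z|/2,
\]
which follows from the $(-d)$-homogeneity of $K$ and the $C^{0,\rho}$ regularity of $\Phi$ via a direct comparison of $\Phi(z/|z|)/|z|^d$ and $\Phi((z-h)/|z-h|)/|z-h|^d$. Since $3\xi=3|h|$ guarantees $|h|\le|z|/2$, combining with $|\theta(x-z)-\theta(x)|\le\omega(|z|)$ and polar coordinates produces
\[
C_{K,d}\,\xi^{\rho}\int_{3\xi}^{\infty}\frac{\omega(\eta)}{\eta^{1+\rho}}\,d\eta,
\]
which is exactly the second integral in \eqref{modNL}.

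For the periodic case I would write $\mathcal{N}_p\theta(x)-\mathcal{N}_p\theta(y)$ using \eqref{ptwiseNp}: the $m=0$ contribution is handled by exactly the argument above, restricted to $\tr$ (no tail truncation issue since $|z|\le\sqrt{d}\,L$), while for $m\neq 0$ the kernels $K(z+m)-K(m)$ are smooth on $\tr$ and $|K(z+m)-K(m)|+|K(x-y+m)-K(m)|\lesssim |m|^{-d-1}$ uniformly, so the corresponding series converges absolutely and its difference is controlled by $\omega(\xi)$ times a summable factor, which is easily absorbed into \eqref{modNL}. The main technical obstacle I anticipate is the bookkeeping in the far part: justifying the region-swapping, tracking the annular correction, and legitimizing the repeated ``subtract a constant'' step inside a principal-value integral. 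Each of these boundary contributions, however, lives on an annulus of thickness $\xi$ at scale $\xi$ and is therefore bounded by a constant multiple of $\omega(\xi)$, which is in turn dominated by either of the two integrals already appearing in $\widetilde\omega(\xi)$.
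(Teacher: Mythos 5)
Your whole-space argument and the paper's proof share the same skeleton: split at scale $\sim\xi$, bound the near part by $\int_0^{3\xi}\omega(\eta)/\eta\,d\eta$ via the annular cancellation, and bound the far part by a kernel-regularity estimate. The bookkeeping differs: the paper compares $\mathcal{N}\theta(x)$ and $\mathcal{N}\theta(y)$ by subtracting the value $\theta(\widetilde x)$ at the midpoint $\widetilde x=(x+y)/2$ from both far-field integrands and estimating $|K(t+\bar x)-K(t-\bar x)|$ with $\bar x=(x-y)/2$, whereas you translate one integral by $h=x-y$ and estimate $|K(z)-K(z-h)|$. These are equivalent up to a symmetric shift, and both hinge on the same kernel-H\"older bound. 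You state that bound as ``standard''; the paper derives it explicitly in \eqref{diffK}--\eqref{bd4} by splitting $\Phi(\cdot)/|\cdot|^d$ into its angular and radial factors, using $[\Phi]_{C^{0,\rho}}$ for the former and the algebraic inequality $|a^d-b^d|\le d|a-b|(a^{d-1}+b^{d-1})$ for the latter, and then absorbing the Lipschitz-scaling radial piece into the $\rho$-H\"older one via $\eta^2\ge(3\xi)^{1-\rho}\eta^{1+\rho}$. You should actually carry out that two-term estimate rather than cite it, since the claimed constant $C_{K,d}$ depends precisely on $\|\Phi\|_{L^\infty}$, $[\Phi]_{C^{0,\rho}}$ and $d$, which only becomes clear from the computation.

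For the periodic case your route is genuinely different from the paper's. The paper truncates the whole-space kernel at radius $M$, defines $\varphi_M(x)=\int_{|z|\le M}K(z)\theta(x-z)\,dz$, repeats the whole-space argument verbatim for $\varphi_M$ (the truncation does not disturb the estimate since all the cancellations happen at scale $\xi$), and then shows $\varphi_M\to\mathcal{N}_p\theta$ uniformly by passing to the Fourier side and invoking Stein's Theorem on the uniform boundedness and vanishing of $\int_{|z|>M}K(z)e^{-2\pi i m\cdot z}\,dz$. You instead work directly with the lattice-sum representation \eqref{ptwiseNp}, isolating the $m=0$ (singular) term and treating the $m\ne0$ tails as a summable, non-singular perturbation that contributes $\lesssim\omega(\xi)$, which is in turn dominated by $\xi^\rho\int_{3\xi}^\infty\omega(\eta)\eta^{-1-\rho}\,d\eta$ since $\omega$ is non-decreasing. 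This can be made to work, but two details need more care than you give them: (i) for $m\neq 0$, since $\Phi$ is only $C^{0,\rho}$ and not Lipschitz, the bound on $|K(z+m)-K(m)|$ for $z\in\tr$ is $\lesssim|m|^{-d-\rho}$, not $|m|^{-d-1}$, though this is still summable; and (ii) for the $m=0$ term, your change of variable $w=z-h$ pushes $w$ outside $\tr$, creating boundary corrections along $\partial\tr$ rather than only on the annulus $2\xi\le|z|\le4\xi$; these are harmless (bounded, since $K$ is bounded away from the origin, by $L^{-1}\omega(\xi)$ roughly), but they are not of the form you describe and must be estimated separately. On balance the paper's truncation-and-limit argument sidesteps both of these wrinkles at the cost of invoking the Fourier-multiplier characterization; your series-based argument is more elementary but requires fixing those two points before it is airtight.
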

\begin{proof}
	That $\mathcal{N}\theta$ and $\mathcal{N}_p\theta$ are both well defined under the respective assumptions on $\theta$ is known, see \cite{CZ1954, Stein1970book, SWbook1971}. The proof is pretty much a standard trick in the analysis of singular integral operators, where smoothness of the function is used to control the singularity of the kernel near 0, while we use the smoothness and decay of the kernel in order to control the tail end of the integral. To that extent, we let $(x,y)\in\rd$ be arbitrary, set $\xi:=|x-y|$, $\widetilde x:=(x+y)/2$ and $\bar x:=(x-y)/2$. From \eqref{ptwiseN} and since $\Phi$ satisfies the zero average condition \eqref{zeroavg}, we get that 
	\[
	\mathcal{N}\theta(x)=\int_{|x-z|\leq2\xi}K(x-z)(\theta(z)-\theta(x))\ dz+\int_{|x-z|\geq2\xi}K(x-z)(\theta(z)-\theta(\tilde x))\ dz,
	\]
	from which we can bound
	\begin{equation}\label{bd1}
	\left|\int_{|x-z|\leq2\xi}K(x-z)(\theta(z)-\theta(x))\ dz\right|\leq C_d\|\Phi\|_{L^{\infty}}\int_0^{2\xi} \frac{\omega(\eta)}{\eta}d\eta,
	\end{equation}
	and similarly for $\mathcal{N}\theta(y)$. Therefore, we get that 
	\begin{equation}
	|\mathcal{N}\theta(x)-\mathcal{N}\theta(y)|\leq2C_d\|\Phi\|_{L^{\infty}}\int_0^{2\xi} \frac{\omega(\eta)}{\eta}d\eta+|I(x)-I(y)|,
	\end{equation}
	where
	\[
	I(x):=\int_{|x-z|\geq2\xi}K(x-z)(\theta(z)-\theta(\tilde x))\ dz,\quad I(y):=\int_{|y-z|\geq2\xi}K(y-z)(\theta(z)-\theta(\tilde x))\ dz.
	\]
	Since we have the inclusion 
	\[
	\{z\in\rd:|z-x|\geq 2\xi\}\cup\{z\in\rd:|z-y|\geq 2\xi\}\subset\{z\in\rd:|z-\widetilde x|\geq3\xi/2\},
	\]
	we can bound
	\begin{align*}
	|I(x)-I(y)|\leq& \int_{|z-\widetilde x|\geq3\xi}|K(x-z)-K(y-z)||\theta(z)-\theta(\widetilde x)|\ dz\nonumber\\
	&+\int_{3\xi/2\leq|z-\widetilde x|\leq3\xi}\left|K(x-z)+K(y-z)\right|\left|\theta(z)-\theta(\widetilde x)\right|\ dz\\
	&\leq\int_{|z-\widetilde x|\geq3\xi}|K(x-z)-K(y-z)||\theta(z)-\theta(\widetilde x)|\ dz+ C_d\|\Phi\|_{L^{\infty}}\int_0^{3\xi} \frac{\omega(\eta)}{\eta}d\eta,
	\end{align*}
	where in the last inequality we used $2|x-z|\geq|z-\widetilde{x}|$ (with the same for $y$) along with a similar calculation as in \eqref{bd1}. It is more convenient to invoke the change of variable $t:=\widetilde x-z$ in the first integral above, and hence to obtain \eqref{modNL}, we have to prove 
	\begin{equation}\label{I2}
	\int_{|t|\geq3\xi}|K(t+\bar x)-K(t-\bar x)||\theta(\widetilde x-t)-\theta(\widetilde x)|\ dt\leq C_{K,d}\xi^{\rho}\int_{3\xi}^{\infty}\frac{\omega(\eta)}{\eta^{1+\rho}},
	\end{equation}
	from some constant $C_{K,d}$. Now, from \eqref{defkern}, and the fact that $\Phi\in C^{0,\rho}(\mathbb{S}^{d-1})$, we get 
	\begin{align}\label{diffK}
	|K(t+\bar x)-K(t-\bar x)|=&\left|\frac{\Phi\left(\frac{t+\bar x}{|t+\bar x|}\right)-\Phi\left(\frac{t-\bar x}{|t-\bar x|}\right)}{|t+\bar x|^d}+\Phi\left(\frac{t-\bar x}{|t-\bar x|}\right)\left[\frac{1}{|t+\bar x|^d}-\frac{1}{|t-\bar x|^d}\right]\right|\nonumber\\
	&\leq \frac{[\Phi]_{C^{0,\rho}}}{|t+\bar x|^{d}}\left|\frac{t+\bar x}{|t+\bar x|}-\frac{t-\bar x}{|t-\bar x|}\right|^{\rho}+\|\Phi\|_{L^{\infty}}\left|\frac{1}{|t+\bar x|^d}-\frac{1}{|t-\bar x|^d}\right|.
	\end{align}
	Let us now note that since $\left||t-\bar x|-|t+\bar x|\right|\leq 2|\bar x|=\xi$, we must have  
	\begin{equation}\label{bd2}
	\left|\frac{t+\bar x}{|t+\bar x|}-\frac{t-\bar x}{|t-\bar x|}\right|\leq \frac{|t|\xi}{|t+\bar x||t-\bar x|}+\frac{\xi}{2|t+\bar x|}+\frac{\xi}{2|t-\bar x|}\leq 6\frac{\xi}{|t|},
	\end{equation}
	where in the last inequality we also used $|t\pm\bar x|\geq |t|/2$ for $|t|\geq 3\xi$. As for the second difference in \eqref{diffK}, we first note that $\left|a^d-b^d\right|\leq d\left|a-b\right|\left[a^{d-1}+b^{d-1}\right]$, whenever $a,b\geq0$ to get 
	\begin{equation}\label{bd3}
		\frac{\left||t-\bar x|^d-|t+\bar x|^d\right|}{|t-\bar x|^d|t+\bar x|^d}\leq d\xi\left[\frac{1}{|t+\bar x||t-\bar x|^d}+\frac{1}{|t-\bar x||t+\bar x|^d}\right].
	\end{equation}
	Finally, plugging \eqref{bd2}-\eqref{bd3} into \eqref{diffK}, using $|\theta(\widetilde x-t)-\theta(\widetilde x)|\leq \omega(|t|)$ and utilizing one more time $|t\pm\bar x|\geq |t|/2$ for $|t|\geq 3\xi$ we get
	\begin{equation}\label{bd4}
		\int_{|t|\geq3\xi}|K(t+\bar x)-K(t-\bar x)||\theta(\widetilde x-t)-\theta(\widetilde x)|\ dt\leq C_{K,d}\xi^{\rho}\int_{3\xi}^{\infty}\frac{\omega(\eta)}{\eta^{1+\rho}}\ d\eta+C_{K,d}\xi \int_{3\xi}^{\infty}\frac{\omega(\eta)}{\eta^{2}}\ d\eta,
	\end{equation}
	from which \eqref{modNL} would follow immediately after noting that $\eta^2\geq (3\xi)^{1-\rho}\eta^{1+\rho}$ in the second integral above.
	
	Now for the case of $\mathcal{N}_p\theta$ where $\theta$ is periodic with zero average. For a sufficiently large $M>>1$, consider the function defined by 
	\[
	\varphi_M(x):=\int_{|z|\leq M}K(z)\theta(x-z)\ dz=\int_{|x-z|\leq M}K(x-z)\theta(z)\ dz.
	\]
	We can repeat the previous argument, word by word, to conclude that 
	\[
	|\varphi_M(x)-\varphi_M(y)|\leq C_{K,d}\int_0^{3\xi}\frac{\omega(\eta)}{\eta}\ d\eta+C_{K,d}\xi^{\rho}\int_{3\xi}^{\infty}\frac{\omega(\eta)}{\eta^{1+\rho}}\ d\eta.
	\]
	On the other hand, without loss of generality, we may assume the Fourier series of $\theta$ converges absolutely and uniformly on compact sets. Using the Fourier multiplier definition of $\mathcal{N}_p$ from \eqref{defNp}, it follows that 
	\[
	|\varphi_M(x)-\mathcal{N}_p\theta(x)|\lesssim_{d,L} \sum_{\substack{m\in\mathbb{Z}^d\\m\neq0}}|\widehat \theta(m)|\left|\int_{|z|\leq M}K(z)e^{-2i\pi m\cdot z}\ dz-\widehat K(m)\right|=\sum_{\substack{m\in\mathbb{Z}^d\\m\neq0}}|\widehat \theta(m)|\left|\int_{|z|> M}K(z)e^{-2i\pi m\cdot z}\ dz\right|.
	\]
	According to the proof of Theorem 3 in \cite[Chapter 2]{Stein1970book}, the integral is bounded uniformly in $M$ and $m\in\mathbb{Z}^d \backslash\{0\}$ and goes to 0 as $M$ goes to infinity. This allows us to conclude our proof.
\end{proof}

\section{Proofs of Main Results}\label{secpfmainres}
In \S\ref{strat}, following \cite{Ibdah2020b, Kiselev2011,KNS2008, KNV2007}, we outline the strategy of the proof of all our results. Theorem \ref{thm1} is then proven in subsection \S\ref{pfthm1}, Theorem \ref{thm2} is proven in \S\ref{pfthm2}, and we conclude the section with the proof of Theorems \ref{thm3} and \ref{thm4} in \S\ref{pfthm3}, \S\ref{pfthm4a} and \S\ref{pfthm4b}. 
\subsection{The Breakthrough Scenario}\label{strat}
Let $u$ be a solution to any of \eqref{maineq}, \eqref{maineq2}, \eqref{classicalddintro} or \eqref{maineq3intro} with regularity $C_t^1C_x^2$. Recall that in the case of \eqref{maineq} and \eqref{maineq2}, if 
\begin{equation}\label{defmaxT}
T_*:=\sup\left\{t>0:\|\nabla u(s,\cdot)\|_{L^{\infty}}<\infty, \forall s\in[0,t]\right\},
\end{equation}
then $u\in C_t^1C_x^2([0,T_*)\times\rd)$ (and in fact, $u(t,\cdot)\in C^{\infty}(\rd)$ for $t\in(0,T_*)$). Let $T>0$ be arbitrary, and suppose $\Omega$ is a time-dependent strong modulus of continuity on $[0,T]$ (according to Definition \ref{timdepmoddef}).  Furthermore, suppose $u_0$ strictly obeys $\Omega(0,\cdot)$ as in Definition \ref{defobeymod}. Let us define 
\begin{equation}\label{deftau1}
	\tau:=\sup\left\{t\in(0,T]:|u(s,x)-u(s,y)|<\Omega(s,|x-y|)\ \forall s\in[0,t],x\neq y,\right\},\\
\end{equation}
and suppose for the moment that $\tau>0$. Clearly we must have $\tau<T_*$ regardless of how $T_*$ compares to $T$, otherwise since the solution is $C^2$ in space on $[0,T_*)$ and obeys the modulus of continuity $\Omega(t,\cdot)$ for every $t\in[0,T_*)$, Lemma \ref{gradbd} is applicable on $[0,T_*)$. Since $\partial_\xi\Omega(t,0)<\infty$ for $t\in[0,T]$ and $\Omega$ satisfies one of the time-regularity conditions listed in Definition \ref{timdepmoddef}, this would lead to the uniform bound 
\[
\sup_{t\in[0,T_*)}\|\nabla u(t,\cdot)\|_{L^{\infty}}\leq \max_{t\in[0,T]}\partial_\xi\Omega(t,0)<\infty,
\]
clearly contradicting the definition of $T_*$. The idea behind the proofs of Theorems \ref{thm1}, \ref{thm2}, \ref{thm3} and \ref{thm4} is to construct an $\Omega$ such that we can guarantee $\tau=T$, and hence $T_*>T$ (in the case of \eqref{maineq} and \eqref{maineq2}). As a byproduct, we obtain the bounds \eqref{mainbd1}, \eqref{mainbd2}, \eqref{supcritdd}, \eqref{crtibdnsethm} and \eqref{supercritbdnsethm}. This will be done by contradiction, that is from now on, we assume $T_*\leq T$ (when considering Theorems \ref{thm1} and \ref{thm2}), and so by the previous discussion, we must have $\tau<T_*\leq T$. Point is, the solution has not exhibited blowup at time $\tau$ and still obeys $\Omega$, albeit not necessarily strictly. More so, by short time existence, $u$ is still smooth for a short time beyond $\tau$. In the case of Theorems \ref{thm3} and \ref{thm4}, we assumed $u$ is at the required regularity level on $[0,T]$ (with $T$ now being dictated by the given drift velocity $b$) to apply Lemma \ref{gradbd}, and so what we do is assume $\tau<T$, and construct a time-dependent strong modulus of continuity $\Omega$ on $[0,T]$ depending on the drift velocity $b$. We start by identifying the only possible scenario at time $\tau$, one that is depicted by the solution vector-field violating its strict modulus of continuity away from the diagonal $x=y$, the so called ``breakthrough scenario''. See also \cite[Lemma~2.3]{Kiselev2011} and \cite[Proposition~4.1]{Ibdah2020b}. We emphasize that when working in the whole space with vanishing properties at spatial infinity, we need to make further technical assumptions regarding the growth of $\Omega(t,\cdot)$ with respect to $\|u(t,\cdot)\|_{L^{\infty}}$, see Proposition \ref{breakthroughwholespace} and the remarks following it.
 \begin{prop}\label{breakthrough}
 Let $u\in C([0,T_*)\times\rd)\cap C([0,T_*);W^{1,\infty}(\rd))$ be a vector field such that $u(t,\cdot)\in C_{per}^2(\rd)$ for every $t\in[0,T_*)$. Let $T>0$ be arbitrary, and suppose that $\Omega$ is a time-dependent strong modulus of continuity on $[0,T]$ such that $u(0,\cdot)$ strictly obeys $\Omega(0,\cdot)$. Let us define
 \begin{equation}\label{deftau}
	\tau:=\sup\left\{t\in(0,T]:|u(s,x)-u(s,y)|<\Omega(s,|x-y|)\ \forall s\in[0,t],x\neq y\right\}.\\
\end{equation}
It follows that $\tau$ is positive and if $\tau<\min\{T,T_*\}$, then we must have 
\[
|u(\tau,x_0)-u(\tau,y_0)|=\Omega(\tau,|x_0-y_0|),
\]
for some $x_0\neq y_0$.
\end{prop}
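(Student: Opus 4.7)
The plan is to track the function $F(t,x,y) := |u(t,x) - u(t,y)| - \Omega(t,|x-y|)$ on $[0,T_*) \times \{(x,y) \in \rd \times \rd : x\neq y\}$, so that strict obedience at time $t$ reads $F(t,\cdot,\cdot) < 0$ pointwise off the diagonal. First I would establish positivity of $\tau$ by showing strict obedience persists on a small time window; then I would treat the breakthrough claim by contradiction, noting that if strict obedience still held at $\tau$ everywhere off the diagonal, the same continuity argument would let me extend past $\tau$ and contradict its definition as a supremum.

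To establish $\tau > 0$, I would exploit that $u(0,\cdot)\in C^2_{per}(\rd)\subset W^{2,\infty}(\rd)$ strictly obeys the strong modulus $\Omega(0,\cdot)$, so Lemma \ref{gradbd} supplies the crucial \emph{strict} bound $\|\nabla u(0,\cdot)\|_{L^\infty} < \partial_\xi\Omega(0,0)$. Fix $\epsilon>0$ with $\|\nabla u(0,\cdot)\|_{L^\infty} + 2\epsilon < \partial_\xi\Omega(0,0)$. Since $\Omega(0,\xi)/\xi \to \partial_\xi\Omega(0,0)$ as $\xi \to 0^+$, and using continuity in $t$ of both $\|\nabla u(t,\cdot)\|_{L^\infty}$ and $\partial_\xi\Omega(t,0)$ (or monotonicity of $\Omega(\cdot,\xi)$, according to whichever option in Definition \ref{timdepmoddef} applies), one obtains $\delta_1,t_1>0$ so that $\Omega(t,\xi)/\xi > \|\nabla u(t,\cdot)\|_{L^\infty} + \epsilon$ for every $t\in[0,t_1]$ and $\xi\in(0,\delta_1]$; combined with the mean-value bound $|u(t,x) - u(t,y)| \leq \|\nabla u(t,\cdot)\|_{L^\infty}|x-y|$, this handles the near-diagonal regime. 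For $|x-y| \geq \delta_1$, periodicity of $u$ lets me fix $x\in\tr$ and reduce the remaining supremum of $F(0,x,\cdot)$ to $y$ ranging over a bounded set (the further $|x-y|$ is, the more negative $-\Omega(0,|x-y|)$ becomes, eventually overwhelming the periodic bound $|u(0,x)-u(0,y)|\leq 2\|u(0,\cdot)\|_{L^\infty}$); on that compact region $F(0,\cdot,\cdot)$ is continuous and strictly negative, hence uniformly bounded above by some $-\eta<0$, and joint continuity of $u$ and $\Omega$ in $t$ keeps $F(t,\cdot,\cdot)\leq -\eta/2$ on a short time window $[0,t_2]$. Taking $\tau\geq\min(t_1,t_2)>0$ closes this step.

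For the breakthrough claim, assume $\tau<\min\{T,T_*\}$. Letting $s\nearrow\tau$ in the defining strict inequality and invoking time-continuity of $u$ and $\Omega$ yields $|u(\tau,x)-u(\tau,y)|\leq \Omega(\tau,|x-y|)$ for every $x\neq y$. Suppose for contradiction that this is strict everywhere off the diagonal; then $u(\tau,\cdot)\in C^2_{per}$ inherits the strong modulus $\Omega(\tau,\cdot)$, so Lemma \ref{gradbd} again yields $\|\nabla u(\tau,\cdot)\|_{L^\infty}<\partial_\xi\Omega(\tau,0)$, and the two-regime continuity argument of the previous paragraph reruns verbatim with initial time $\tau$ in place of $0$, producing strict obedience on $[\tau,\tau+\epsilon']$ for some $\epsilon'>0$ and contradicting $\tau$'s definition as a supremum. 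Hence equality must be attained at some pair $x_0\neq y_0$, proving the claim. The hard part will be ensuring that the uniform near-diagonal gap $\Omega(t,\xi)/\xi - \|\nabla u(t,\cdot)\|_{L^\infty}$ is genuinely positive across a time neighborhood of the relevant instant; this is precisely why Definition \ref{defmod} builds in $\omega''(0^+)=-\infty$ and why Lemma \ref{gradbd} delivers a \emph{strict} Lipschitz bound rather than a mere $\leq$, and why one of the two time-regularity options in Definition \ref{timdepmoddef} is imposed.
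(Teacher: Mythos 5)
Your overall structure matches the paper's proof closely: you first establish $\tau>0$ by splitting into a near-diagonal regime (handled via the strict Lipschitz bound $\|\nabla u(0,\cdot)\|_{L^\infty}<\partial_\xi\Omega(0,0)$ supplied by Lemma~\ref{gradbd}, plus continuity in time) and a far-diagonal regime (handled by compactness), and then the breakthrough claim follows by the same one-paragraph contradiction the paper uses. The near-diagonal argument and the contradiction step are essentially identical to the paper's.

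There is, however, a genuine gap in your far-diagonal reduction. You justify restricting $y$ to a bounded set by claiming ``the further $|x-y|$ is, the more negative $-\Omega(0,|x-y|)$ becomes, eventually overwhelming the periodic bound $|u(0,x)-u(0,y)|\leq 2\|u(0,\cdot)\|_{L^\infty}$.'' This implicitly requires $\Omega(0,\cdot)$ to be unbounded, but the paper deliberately \emph{allows bounded moduli of continuity} (see the remark after Hypothesis~\ref{hypoth}, and the $\tanh$-based modulus \eqref{statmodpfthm2} used in the proof of Theorem~\ref{thm2}). For a bounded $\Omega$, $-\Omega(0,\xi)$ plateaus and never overwhelms $2\|u_0\|_{L^\infty}$, so your stated justification collapses. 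The correct mechanism — which you half-invoke by mentioning periodicity — is to use periodicity of $u$ \emph{together with} monotonicity of $\Omega(t,\cdot)$: for any pair $(x,y)$ with $x\in[0,L]^d$ and $|x-y|>2L\sqrt{d}$, translate $y$ by a lattice vector to a representative $y'$ with $|x-y'|\leq 2L\sqrt{d}$; then $u(t,y')=u(t,y)$ while $\Omega(t,|x-y'|)\leq\Omega(t,|x-y|)$, so the strict inequality on the compact set $\{(x,y):x\in[0,L]^d,\ |x-y|\in[\delta,2L\sqrt{d}]\}$ propagates to all large separations for free. This is exactly how the paper closes the far-diagonal step, and it is the observation you need in place of the ``$\Omega$ eventually dominates'' argument.
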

\begin{proof}
	Here, we will assume condition 1 of Definition \ref{timdepmoddef}, and we refer the reader to \cite[Lemma~2.3]{Kiselev2011} for the other case. According to Lemma \ref{gradbd} (since $C^2_{per}(\rd)\subset W^{2,\infty}(\rd)$), we have $\|\nabla u(0,\cdot)\|_{L^{\infty}}<\partial_\xi\Omega(0,0)$, and by continuity of the function $\left\|\nabla u(t,\cdot)\right\|_{L^{\infty}}$, this remains true for $t\in[0,\epsilon_0]$, some $\epsilon_0>0$. Set
	\[
	M_0:=\max_{t\in[0,\epsilon_0]}\left\|\nabla u(t,\cdot)\right\|_{L^{\infty}}<\partial_\xi\Omega(0,0),
	\]
	and consider the function
	\[
	h(\xi):=M_0-\frac{\Omega(0,\xi)}{\xi},\quad \xi>0.
	\]
	Clearly, $h(\xi)<0$ for $\xi\in(0,\delta)$, some $\delta>0$. It follows that whenever $t\in[0,\epsilon_0]$ and $|x-y|\in(0,\delta)$, since $\Omega(\cdot,\xi)$ is nondecreasing as a function of time for each fixed $\xi\geq0$, we must have
	\[
	|u(t,x)-u(t,y)|\leq M_0|x-y|<\Omega(t,|x-y|).
	\]
	We now define
	\[
	\mathcal{A}:=\left\{(x,y)\in[0,L]^d\times\mathbb{R}^d:|x-y|\in[\delta,2L\sqrt{d}]\right\},
	\]
	where $L>0$ is the period of $\theta(t,\cdot)$, and note that since the set $[0,\epsilon_0]\times \mathcal{A}$ is compact, the function
	\[
	R(t,x,y):=|u(t,x)-u(t,y)|-\Omega(t,|x-y|),
	\]
	is uniformly continuous on it, and as $R(0,x,y)<0$, the same must be true on $[0,\epsilon]\times \mathcal{A}$, some $\epsilon\in(0,\epsilon_0]$. As $u(t,\cdot)$ is $L$ periodic and $\Omega(t,\cdot)$ is non-decreasing, we must have $\tau\geq\epsilon>0$.
	
	The second part of the proposition follows by similar arguments. Suppose for the sake of contradiction that $\tau<\min\{T,T_*\}$ and that $u(\tau,\cdot)$ strictly obeys $\Omega(\tau,\cdot)$. Since $u$ has sufficient smoothness in a small neighborhood of $\tau$, we can repeat the previous argument word by word to extend preservation of the strict modulus of continuity for a small time beyond $\tau$, which contradicts the definition of $\tau$.
\end{proof}
\begin{prop}\label{breakthroughwholespace}
 Let $u\in C([0,T_*)\times\rd)\cap C([0,T_*);W^{1,\infty}(\rd))$ be a vector field such that $u(t,\cdot)\in C^2(\rd)\cap W^{2,\infty}(\rd)$ for every $t\in[0,T_*)$ and for which 
 \begin{equation}\label{decay}
 \lim_{|x|\rightarrow\infty}|\nabla u(t,x)|=0,\quad \forall t\in[0,T_*).
 \end{equation}
 Let $T>0$ be arbitrary, and suppose that $\Omega$ is a time-dependent strong modulus of continuity on $[0,T]$ such that $u(0,\cdot)$ strictly obeys $\Omega(0,\cdot)$. Assume further that for every compact subinterval $\mathcal{K}=[t_1,t_2]\subset[0,T]$, there exists some $K=K(\mathcal{K})\geq1$ such that $\Omega(t,\xi)\geq 3\|u(t,\cdot)\|_{L^{\infty}}$ whenever $(t,\xi)\in \mathcal{K}\times[K,\infty)$ and suppose that $\Omega$ satisfies condition 1 of Definition \ref{timdepmoddef}. Let $\tau$ be as defined in \eqref{deftau}. It follows that $\tau$ is positive and if $\tau<\min\{T,T_*\}$, then we must have 
\[
|u(\tau,x_0)-u(\tau,y_0)|=\Omega(\tau,|x_0-y_0|),
\]
for some $x_0\neq y_0$.
\end{prop}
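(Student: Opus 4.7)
The plan is to mirror the structure of Proposition \ref{breakthrough}, with the essential new ingredient being how to handle the failure of compactness of $\rd\times\rd$. I will split the pair $(x,y)$ into three regimes according to $|x-y|$: small separations controlled by Lipschitz regularity near the diagonal, large separations controlled by the $\Omega(t,\xi)\geq 3\|u(t,\cdot)\|_{L^{\infty}}$ hypothesis, and the ``middle annulus'' handled by exploiting the decay $|\nabla u(t,x)|\to 0$ as $|x|\to\infty$.

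For positivity of $\tau$, Lemma \ref{gradbd} combined with the continuity $u\in C([0,T_*);W^{1,\infty}(\rd))$ yields some $\epsilon_0>0$ and a constant $M_0$ with
\[
\max_{t\in[0,\epsilon_0]}\|\nabla u(t,\cdot)\|_{L^{\infty}}=M_0<\partial_\xi\Omega(0,0).
\]
Concavity of $\Omega(0,\cdot)$ near the origin provides $\delta\in(0,1]$ with $M_0\xi<\Omega(0,\xi)$ on $(0,\delta]$, and continuity of $\Omega(\cdot,\xi)$ in time transfers this strict inequality to $[0,\epsilon_0]$, thereby disposing of the regime $|x-y|<\delta$. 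With $K$ chosen from the hypothesis on $[0,\epsilon_0]$, the regime $|x-y|\geq K$ follows immediately since $|u(t,x)-u(t,y)|\leq 2\|u(t,\cdot)\|_{L^{\infty}}\leq \tfrac{2}{3}\Omega(t,|x-y|)$.

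The bulk of the work lies in the middle annulus $\mathcal{A}:=\{(x,y):\delta\leq |x-y|\leq K\}$. Setting $R_0(x,y):=\Omega(0,|x-y|)-|u(0,x)-u(0,y)|$, the key point is to obtain a \emph{uniform} positive lower bound $R_0\geq c>0$ on $\mathcal{A}$. Using the fundamental theorem of calculus along the segment from $y$ to $x$, I pick $K_1>0$ so large that $|\nabla u(0,z)|<\Omega(0,\delta)/(2K)$ whenever $|z|\geq K_1$; then for $(x,y)\in\mathcal{A}$ with $|x|\geq K_1+K$, the whole segment $[x,y]$ lies outside $B(0,K_1)$, yielding $|u(0,x)-u(0,y)|\leq \Omega(0,\delta)/2$ and hence $R_0(x,y)\geq \Omega(0,\delta)/2$. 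For $|x|\leq K_1+K$, the pair $(x,y)$ ranges over a genuinely compact subset of $\mathcal{A}$ on which the continuous, strictly positive $R_0$ attains a positive minimum. A time-continuity argument using $u\in C([0,T_*);L^{\infty}(\rd))$ together with the $C^1$ regularity of $\Omega$ in $t$, uniform over $[\delta,K]$, then upgrades $R_0\geq c$ to $R(t,x,y)>0$ on $[0,\epsilon]\times\mathcal{A}$ for some $\epsilon>0$, giving $\tau\geq\epsilon>0$.

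For the second assertion, if $\tau<\min\{T,T_*\}$ and $u(\tau,\cdot)$ were to strictly obey $\Omega(\tau,\cdot)$, the three-regime argument applied with the base time $0$ replaced by $\tau$ (using that $u$ retains $C^2\cap W^{2,\infty}$ regularity and that $|\nabla u(\tau,\cdot)|$ vanishes at infinity) would extend the strict modulus of continuity past $\tau$, contradicting the definition \eqref{deftau}. The principal obstacle throughout is the middle annulus: the pointwise-in-$t$ decay of $\nabla u$ at infinity does not immediately deliver uniform-in-$t$ control, and the proposal sidesteps this by anchoring the decay estimate at a single time and transferring it forward via continuity of $u$ in the $L^{\infty}$ topology.
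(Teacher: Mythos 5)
Your proof is correct and follows the same three-regime decomposition as the paper: a near-diagonal regime controlled by the Lipschitz bound, a far regime killed by the hypothesis $\Omega(t,\xi)\geq 3\|u(t,\cdot)\|_{L^\infty}$, and a middle annulus in $|x-y|$ split further into pairs far from the origin (handled via the decay of $\nabla u$ at infinity plus the fundamental theorem of calculus along the segment $[x,y]$) and a compact remainder. Where you diverge is the transfer to positive times: the paper propagates the gradient-decay estimate itself forward in time, using continuity of $\gamma(t):=\|\nabla u(t,\cdot)\|_{L^\infty(\{|x|\geq K_1\})}$ (available from $u\in C([0,T_*);W^{1,\infty})$), and then applies the mean value theorem at each $t\in[0,\epsilon_1]$; you instead fix the decay estimate at time $0$ only, extract a uniform positivity gap $R_0\geq c>0$ on the annulus, and then propagate that gap forward using $\|u(t,\cdot)-u(0,\cdot)\|_{L^\infty}\to 0$. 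Your variant is slightly more economical in that it invokes only $L^\infty$-in-time continuity rather than $W^{1,\infty}$-in-time continuity for the propagation step. One small imprecision: you invoke ``$C^1$ regularity of $\Omega$ in $t$, uniform over $[\delta,K]$'' to control $\Omega(t,|x-y|)-\Omega(0,|x-y|)$, but Definition \ref{timdepmoddef} gives piecewise $C^1$ only pointwise in $\xi$; fortunately this is unnecessary, since condition~1 (monotonicity of $\Omega$ in time, which the Proposition assumes) makes that difference non-negative outright, so the estimate $R(t,x,y)\geq c-2\|u(t,\cdot)-u(0,\cdot)\|_{L^\infty}$ closes the argument. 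The second assertion (extending past $\tau$ by rerunning the same argument with base time $\tau$) matches the paper's treatment.
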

\begin{proof}
As in the proof of Proposition \ref{breakthrough}, we have 
\[
	|u(t,x)-u(t,y)|<\Omega(t,|x-y|),
\]
whenever $|x-y|\in (0,\delta]$ and $t\in[0,\epsilon_0]$ some $\delta,\epsilon_0>0$, since periodicity was not used in this part of the proof. Our assumption on $\Omega$ guarantees the existence of a sufficiently large $K_0=K_0(\epsilon_0)$ such that $\Omega(t,\xi)\geq 3\|u(t,\cdot)\|_{L^{\infty}}$ whenever $(t,\xi)\in[0,\epsilon_0]\times[K_0,\infty)$. It follows that for $|x-y|\in[K_0,\infty)$ and $t\in[0,\epsilon_0]$, we must have 
\[
|u(t,x)-u(t,y)|<\Omega(t,|x-y|),
\]
and so it remains to handle the case $|x-y|\in[\delta,K_0]$. To that extent, we first note that from \eqref{decay}, for any given $\mu>0$, we can find a large enough $K_1$ such that if we define the set $\mathcal{A}:=\{x\in\rd:|x|\geq K_1\}$, then $\|\nabla u(0,\cdot)\|_{L^{\infty}(\mathcal{A})}<\mu$. Since we assumed that $u\in C([0,T_*);W^{1,\infty}(\rd))$, the function $\gamma(t):=\|\nabla\theta(t,\cdot)\|_{L^{\infty}(\mathcal{A})}$ is continuous as well. Hence, we must have $\|\nabla u(t,\cdot)\|_{L^{\infty}(\mathcal{A})}<\mu$ for  $t\in[0,\epsilon_1]$, some $\epsilon_1\in(0,\epsilon_0]$. Thus, we now chose $K_1$ large enough such that 
\begin{equation}\label{small}
|\nabla u(t,x)|<K_0^{-1}\Omega(0,\delta),\quad \forall|x|\geq K_1, \quad t\in[0,\epsilon_1].
\end{equation}
Next, we split the set $\mathcal{B}:=\left\{(x,y)\in\rd\times\rd:|x-y|\in[\delta,K_0]\right\}$, into $\mathcal{B}_1\cup\mathcal{B}_2$ where $\mathcal{B}_2$ is the complement of $\mathcal{B}_1:=\left\{(x,y)\in\mathcal{B}:\min\{|x|,|y|\}>K_0+K_1\right\}$.  By the mean value theorem, whenever $(t,x,y)\in[0,\epsilon_1]\times\mathcal{B}_1$, we must have, for some $\sigma\in(0,1)$
\[
|u(t,x)-u(t,y)|\leq |x-y||\nabla u(t,\sigma(x-y)+y)|< \Omega(0,\delta)\leq \Omega(t,|x-y|),
\]
where we used $|x-y|\in[\delta,K_0]$ and \eqref{small} in the second inequality, while we used the fact that $\Omega$ is nondecreasing in both variables in the third one. Finally, since $\mathcal{B}_2$ is compact, we can certainly obtain a small enough $\epsilon>0$ such that the strict modulus of continuity is obeyed for $(t,x,y)\in[0,\epsilon]\times\mathcal{B}_2$. We may repeat the exact same argument to show that at time $\tau$ we must have 
\[
|u(\tau,x_0)-u(\tau,y_0)|=\Omega(\tau,|x_0-y_0|),
\]
for some $x_0\neq y_0$.
\end{proof}
\begin{rmk}
The assumption that $\Omega(\cdot,\xi)$ is non-decreasing in time could also be replaced with continuity of $\partial_\xi \Omega(\cdot,0)$, but the argument will be slightly more involved. We prefer to keep things simple, since in any case the moduli of continuity that we will be working with are non-decreasing in time.
\end{rmk}
\begin{rmk}
The lower bound on $\Omega$ is satisfied if $\Omega(t,\cdot)$ is unbounded in space. Alternatively, if one has a-priori bounds on $\|u(t,\cdot)\|_{L^{\infty}}$, the one can easily satisfy this condition. It could also be dropped if one can guarantee that the solution $u$ achieves its maximum and minimum at some $x_0$ and $x_1$ for each $t$, as this would imply that $\text{osc}\ u(t,\cdot)<\|\Omega(t,\cdot)\|_{L^{\infty}}$.
\end{rmk}
\subsection{Proof of Theorem \ref{thm1}}\label{pfthm1}
Let us now assume that Hypothesis \ref{hypoth} is true for some $T>0$, and let us suppose further that a given divergence-free vector field $u_0\in \mathbb{V}^k$ ($k>d/2+2$) strictly obeys $\Omega(0,\cdot)$. Let $T_*$ and $\tau$ be as defined in \eqref{defmaxT} and \eqref{deftau1} respectively. Notice that by Proposition \ref{breakthrough}, we must have $\tau>0$, and that for every $t\in[0,\tau]$, $u$ has $\Omega(t,\cdot)$ as a modulus of continuity, and in fact strictly obeys $\Omega(t,\cdot)$ for $t<\tau$. Now assume we are at the breakthrough scenario depicted by Proposition \ref{breakthrough}. That is, we suppose $\tau<T_*\leq T$, and so for some $x_0\neq y_0$, velocity vector-field $u$ violates its strict modulus of continuity. The analysis will be easier to carry out if we work with scalar functions and if $x_0-y_0=\xi e_1$, some $\xi>0$. Hence, we start by choosing a rotational matrix $\mathcal{R}$ such that $\mathcal{R}(x_0-y_0)=\xi e_1$, where $\xi:=|x_0-y_0|$ and $e_1$ is the standard unit vector in the first coordinate. Let us define $\tilde x_0:=\mathcal{R}x_0$, $\tilde y_0:=\mathcal{R}y_0$, $\widetilde u(t,x):=\mathcal{R} u(t,\mathcal{R}^{-1}x)$, and $\widetilde p(t,x):=p(t,\mathcal{R}^{-1}x)$. It follows that there exists some unit vector $e\in\mathbb{S}^{d-1}$, possibly depending on the time $\tau$, $x_0$ and $y_0$, but is otherwise a constant on $[0,\tau]\times\rd$ such that 
\[
\left[\widetilde u(\tau,\tilde x_0)-\widetilde u(\tau,\tilde y_0)\right]\cdot e=\Omega(\tau,\xi).
\]
Since moduli of continuity are not sensitive to distance preserving maps, if we define the scalar function $\theta(t,x):=e\cdot\widetilde u(t,x)$ and study its evolution on $[0,\tau+\epsilon]$ we get that
\begin{align}
	&\partial_t\theta(t,x)-\Delta\theta(t,x)=(\widetilde u\cdot\nabla)\theta(t,x)+e\cdot\nabla \widetilde p(t,x),&&\forall (t,x)\in[0,\tau+\epsilon]\times\rd,\label{evoltheta}\\
	&|\widetilde u(t,x)-\widetilde u(t,y)|\leq \Omega(t,|x-y|), &&\forall (t,x,y)\in [0,\tau]\times\rd\times\rd,\label{tmod2}\\
	&|\theta(t,x)-\theta(t,y)|<\Omega(t,|x-y|), &&\forall t\in [0,\tau),\quad x\neq y,\label{modtheta}\\
	&\theta(\tau,\tilde x_0)-\theta(\tau,\tilde y_0)=\Omega(\tau,\xi), &&(\tilde x_0-\tilde y_0)=\xi e_1,\label{violation}
\end{align}
We now consider the function
\[
\Gamma(t):=\theta(t,\tilde x_0)-\theta(t,\tilde y_0)-\Omega(t,\xi),
\]
which is piecewise $C^1$ on the interval $[0,\tau+\epsilon]$. Notice that from \eqref{modtheta} and \eqref{violation}, we have $\Gamma(t)<0$ for $t\in[0,\tau)$, so that $\Gamma$ attains its maximum at $\tau$. Thus, in order to rule out the breakthrough scenario \eqref{violation}, it suffices to construct $\Omega$ such that $\Gamma'(\tau)<0$, with $\partial_t\Omega(\tau,\xi)$ being understood as the left time derivative of $\Omega$ evaluated at the point $(\tau,\xi)$. Using \eqref{evoltheta} (which holds true in the pointwise sense for $t\in[0,\tau+\epsilon]$), we see that 
\begin{align}\label{gprime}
\Gamma'(\tau)=&\Delta\theta(\tau,\tilde x_0)-\Delta\theta(\tau,\tilde y_0)-\partial_t\Omega(\tau^{-},\xi)\nonumber\\
&+\left(\widetilde u\cdot\nabla\right)\theta(\tau,\tilde x_0)-\left(\widetilde u\cdot\nabla\right)\theta(\tau,\tilde y_0)+e\cdot\nabla \widetilde p(\tau, x_0)-e\cdot\nabla \widetilde p(\tau,y_0).
\end{align}
Let us now estimate each term in \eqref{gprime}. We extract local dissipation from the Laplacian via \eqref{lap} to get
\begin{equation}\label{locdissp}
\Delta\theta(\tau,\tilde x_0)-\Delta\theta(\tau,\tilde y_0)\leq 4\partial^2_{\xi}\Omega(\tau,\xi^-).
\end{equation}
Next, to handle the nonlinearity, we use \eqref{derv} to get that all derivatives of $\theta$ evaluated at $\tilde x_0$ or $\tilde y_0$ in directions other than $e_1$ vanish. On the other hand, following Remark \ref{imprmk}, we see that $\xi$ cannot be a point of jump discontinuity of $\partial_\xi\Omega$. Thus, the derivative of $\theta$ (evaluated at $\tilde x_0$ or $\tilde y_0$) in the $e_1$ direction is precisely $\partial_\xi\Omega$ and so the nonlinear term reduces to
\begin{equation}\label{nonlin}
\begin{aligned}
(\widetilde u\cdot\nabla)\theta(\tau,\tilde x_0)-(u\cdot\nabla)\theta(\tau,\tilde y_0)&=\widetilde u_1(\tau,\tilde x_0)\partial_1\theta(\tau,\tilde x_0)-\widetilde u_1(\tau,\tilde y_0)\partial_1\theta(\tau,\tilde y_0)\\
&=\left(\widetilde u_1(\tau,\tilde x_0)-\widetilde u_1(\tau,\tilde y_0)\right)\partial_\xi\Omega(\tau,\xi)\leq \Omega(\tau,\xi)\partial_\xi\Omega(\tau,\xi),
\end{aligned}
\end{equation}
where in the last inequality we used \eqref{tmod2}. Finally, the gradient of the pressure is estimated from above via Lemma \ref{pressest} (with $b=u$) to get, since $|e|=1$ and $\mathcal{R}$ is an orthogonal matrix,
\begin{equation}\label{press}
e\cdot\nabla \widetilde p(\tau, x_0)-e\cdot\nabla \widetilde p(\tau,y_0)\leq C_d\int_{0}^\xi\left(\frac{\Omega(\tau,\eta)}{\eta}\right)^2d\eta+C_d\Omega(\tau,\xi)\int_{\xi}^{\infty}\frac{\Omega(\tau,\eta)}{\eta^2}d\eta.  
\end{equation}
Plugging estimates \eqref{locdissp}-\eqref{press} into \eqref{gprime} we obtain
\begin{equation}\label{ineqgprime}
\Gamma'(\tau)\leq 4\partial^2_{\xi}\Omega(\tau,\xi)-\partial_t\Omega(\tau,\xi)+\Omega(\tau,\xi)\partial_\xi\Omega(\tau,\xi)+C_d\int_{0}^\xi\left(\frac{\Omega(\tau,\eta)}{\eta}\right)^2d\eta+C_d\Omega(\tau,\xi)\int_{\xi}^{\infty}\frac{\Omega(\tau,\eta)}{\eta^2}d\eta.
\end{equation}
Therefore, in order to rule out the only possible ``breakthrough'' scenario at time $\tau\in(0,T]$ as depicted by Proposition \ref{breakthrough}, we need $\Omega$ to be a time-dependent strong modulus of continuity on $[0,T]$ that makes the right-hand side of \eqref{gprime} negative. That is, if we let 
\begin{equation}\label{contnonloc}
I(t,\xi):=2C_d\left[\int_{0}^\xi\frac{\Omega^2(t,\eta)}{\eta^2}d\eta+\Omega(t,\xi)\int_{\xi}^{\infty}\frac{\Omega(t,\eta)}{\eta^2}d\eta\right],
\end{equation}
then we seek an $\Omega$ that satisfies
\begin{equation}\label{modevol}
\partial_t\Omega(t,\xi)-4\partial^2_{\xi}\Omega(t,\xi)-\Omega(t,\xi)\partial_\xi\Omega(t,\xi)-I(t,\xi)\geq0,\quad \forall (t,\xi)\in(0,T]\times(0,\infty),
\end{equation}
together with the boundary conditions
\begin{equation}\label{BC}
\begin{aligned}
&\Omega(t,0)=0, &&\forall t\in[0,T],\\
&0<\partial_{\xi}\Omega(t,0)<\infty, &&\forall t\in[0,T],\\
&\partial^2_{\xi}\Omega(t,0^+)=-\infty,&&\forall t\in[0,T],\\
\end{aligned}
\end{equation}
along with the positivity and non-decreasing conditions
\begin{equation}\label{pos}
\Omega(t,\xi)>0, \ \partial_{\xi}\Omega(t,\xi)\geq0,\ \forall(t,\xi)\in[0,T]\times\mathbb{R}^+,
\end{equation}
all the while ensuring that $\partial_\xi\Omega(t,\xi^+)\leq\partial_\xi\Omega(t,\xi^-)$ and for which either $\Omega(\cdot,\xi)$ is non-decreasing or $\partial_\xi\Omega(\cdot,0)$ is continuous  (or both) as functions of time. This is precisely the content of Hypothesis \ref{hypoth}, and so if it is true and if in addition
\begin{equation}\label{ICmod}
|u_0(x)-u_0(y)|<\Omega(0,|x-y|),\ \forall  x\neq y,
\end{equation}
then $\tau=T$ and hence $T_*>T$, meaning that the solution velocity vector-field $u$ emanating from such initial data $u_0$ must strictly obey $\Omega$ all the way up to time $T$, and so is smooth on $[0,T]$ and satisfies 
\[
\|\nabla u(t,\cdot)\|_{L^{\infty}}<\partial_\xi\Omega(t,0),\quad \forall t\in[0,T].
\]
Of course, here \eqref{ICmod} implies a smallness assumption $\|\nabla u_0\|_{L^{\infty}}\leq \partial_\xi\Omega(0,0)$. For arbitrary initial data, this may not be true, but can be remedied by exploiting the scale invariance of \eqref{maineq} and Lemma \ref{buildmod}. Indeed, let $u_0\in\mathbb{V}^k$, with $k>d/2+2$, be arbitrary large and let $(u,p)$ be the unique (short-time) solution to \eqref{maineq}. By Lemma \ref{buildmod}, we can chose a sufficiently large $B\geq B_0$ where
\[
B_0:=\frac{2}{\Omega(0,\delta)}\|u_0\|_{L^{\infty}}+\left(\frac{\delta}{\Omega(0,\delta)}\|\nabla u_0\|_{L^{\infty}}\right)^{1/2},
\] 
so that $u_0$ strictly obeys $B\Omega(0,B\xi)$. This means that the solution $(v,q)$ to \eqref{maineq} with initial data $B^{-1}u_0(B^{-1}x)$ is smooth on $[0,T]$ with $v(t,\cdot)$ strictly obeying $\Omega(t,\cdot)$ for every $t\in[0,T]$ and so we have the bound 
\[
\|\nabla v(t,\cdot)\|_{L^{\infty}}< \partial_\xi\Omega(t,0)<\infty,\quad \forall t\in[0,T].
\]
But then by uniqueness of strong solutions, we must have $u(t,x)=Bv(B^2t,Bx)$ and $p(t,x)=B^2q(B^2t,Bx)$ for $t\in[0,T/B^2]$, and so the proof of Theorem \ref{thm1} is complete. The same analysis applies if we are working in the whole space, except we have to take into account the extra requirements on $\Omega$ as explained in Proposition \ref{breakthroughwholespace} and the remarks following it.

A natural question that may pop into the reader's mind is this: do we need $\Omega$ to depend on time? The answer is yes, due to the presence of the integrals, and the fact that we are interested in solutions that are moduli of continuity, meaning they should be non-decreasing in space. To see why, let us ignore the pressure term for a moment (and so we end up with the viscous Burgers equation). The inequality would read
\[
\partial_t\Omega(t,\xi)-4\partial^2_{\xi}\Omega(t,\xi)-\Omega(t,\xi)\partial_\xi\Omega(t,\xi)\geq0.
\]
If we drop time dependence, we end up with the stationary, viscous Burgers ``inequality'': $4\omega''(\xi)+\omega\omega'(\xi)\leq0$. This has a solution $\omega(\xi)=\tanh(\xi)$, and so to satisfy the various requirements in Definition \ref{defmod}, one may define
\begin{equation}\label{statmodpfthm1}
\omega(\sigma):=
\begin{cases}
	2\sigma-\sigma^{3/2}, &\sigma\in[0,\delta_0],\\
	\tanh\left((\sigma-\delta_0)+\mu_0\right),&\sigma>\delta_0,
\end{cases}
\end{equation}
where $\delta_0\in(0,1]$ is a small parameter, and $\mu_0\in(0,1]$ is chosen to ensure continuity at $\delta_0$. This indeed solves $4\omega''+\omega'\omega\leq0$, and is a modulus of continuity provided $\delta_0$ and $\mu_0$ are appropriately chosen. Further, we may rescale it according to Lemma \ref{buildmod} to allow for arbitrary large initial data in a manner that  respects  the scale invariance of the viscous Burgers equation: $\Omega(\xi):=\lambda_0\omega(\lambda_0\xi)$, where $\lambda_0\geq1$ would depend on the size of the given initial data. So for the Burgers equation, we do not need to allow any time-dependence, and we end up with the bound $\|\nabla u(t,\cdot)\|_{L^{\infty}}\leq 2\lambda_0^2$, where $\lambda_0$ depends only on the size of the initial data, but not on the dimension $d$ nor on the size of the period. 

If we consider the perturbation $4\omega''+\omega'\omega+\epsilon\leq0$, where $\epsilon>0$ is small, it would lead us to solving an Airy-type equation (after applying the Cole-Hopf transformation). This is bad news, as such solutions oscillate, and thus are inadequate as moduli of continuity. In some sense, the viscous term is ``saturated'' by the nonlinearity. Hence, if there is any hope for this to work in the case of NSE, one has to rely on the full parabolic operator $\partial_t-\Delta$, not just the elliptic part, $-\Delta$. A natural way of giving some power to the time derivative is to ``dynamically rescale'' $\omega$ as defined in \eqref{statmodpfthm1} in a matter that respects the scale-invariance of Burgers: $\Omega(t,\xi):=\lambda(t)\omega(\xi\lambda(t))$. That way the balance between viscosity and advective nonlinearity is preserved, yet we have some power given to  $\partial_t\Omega$. Now, when  $\xi\lambda\lesssim 1$, the viscous  term will dominate both the advective term and the pressure part, due to the condition $\omega''(0^+)=-\infty$; the difficulty is when $\xi\lambda\gtrsim1$. In this region, the reader is invited to verify that roughly we would need $\lambda'(t)=C_d\lambda^{3}(t)$: we have $\lambda^2$ coming from the product $\Omega^2$, and the third factor comes from the non-integrable singularity. To arrive at this, the interested reader could perform similar analysis to the ones done in \S\ref{pfthm2} below. Regardless of the size of $\lambda_0$, this would always have a solution for short-time. 

Those heuristics are what motivated Theorem \ref{thm2}: we are considering a linear non-local term (which decreases the factor $\lambda^2$ to $\lambda$), and the order of the operator is zero rather than one (decreasing a factor of $\lambda$ to $\log(\lambda)$). This leads to $\lambda'\approx\lambda\log(\lambda)$, which has a global solution. 
\subsection{Proof of Theorem \ref{thm2}}\label{pfthm2}
Deriving the evolution inequality for the modulus of continuity in this case follows essentially in exactly the same way as that done for the NSE. We again assume we are at the breakthrough scenario depicted by Proposition \ref{breakthrough}. That is, we suppose $\tau<T_*$, and so the solution vector-field violates its strict modulus of continuity, giving us a unit vector $e$ and some fixed $x_0\neq y_0$ such that
\begin{equation}\label{brkthru2}
e\cdot \left[u(\tau,x_0)-u_m(\tau,y_0)\right]=\Omega(\tau,|x_0-y_0|).
\end{equation}
Moreover, since $u$ is $L-$periodic in space and $\Omega$ is nondecreasing, we can assume that $|x_0-y_0|\in(0,2L\sqrt{d}]$, for if we can construct an $\Omega$ such that \eqref{brkthru2} is not possible for any $|x-y|\in(0,2L\sqrt{d}]$, then we guarantee that the same thing can be deduced for any $(x,y)\in\rd\times\rd$. Thus, we repeat the exact same argument used in \S\ref{pfthm1}, except we use Lemma \ref{sioholder} instead of \ref{pressest}, to define 
\begin{equation*}\label{contnonloc2}
I(t,\xi):=C_{K,d}\left[\int_{0}^{3\xi}\frac{\Omega(t,\eta)}{\eta}d\eta+\xi^{\rho}\int_{3\xi}^{\infty}\frac{\Omega(t,\eta)}{\eta^{1+\rho}}d\eta\right]
\end{equation*}
and seek an $\Omega$ such that 
\begin{align}
&\partial_t\Omega(t,\xi)-4\partial^2_{\xi}\Omega(t,\xi)-\Omega(t,\xi)\partial_\xi\Omega(t,\xi)-I(t,\xi)>0, &&\forall (t,\xi)\in(0,\infty)\times(0,2L\sqrt{d}],\label{modevol2}\\
&|u_0(x)-u_0(y)|<\Omega(0,|x-y|),&&\forall x\neq y,\label{IC2}
\end{align}
all the while making sure it satisfies Definition \ref{timdepmoddef}:
\begin{align}
&\Omega(t,0)=0, &&\forall t\geq0\label{BCmodthm2},\\
&\Omega(t,\xi_2)\geq\Omega(t,\xi_1), &&\forall \xi_2\geq\xi_1, (t,\xi_1,\xi_2)\in[0,\infty)^3,\label{nondecmodthm2}\\
&\partial_\xi\Omega(t,\xi^+)\leq\partial_\xi\Omega(t,\xi^-),&&\forall (t,\xi)\in(0,\infty)\times(0,\infty),\label{weakconcvthm2}\\
&\partial_t\Omega(t,\xi)\geq0, &&\forall (t,\xi)\in(0,\infty)\times(0,\infty).\label{nondectmodthm2}\\
&\lim_{\xi\rightarrow0^+}\partial^2_\xi\Omega(t,\xi)=-\infty, &&\forall t\geq0,\label{conczerothm2}
\end{align}
Actually, condition \eqref{nondectmodthm2} needs to hold only when $\xi$ is a possible ``breakthrough'' point; in particular, following Remark \ref{imprmk}, we do not need to worry about it when $\xi$ is a point of jump discontinuity of $\partial_\xi\Omega(t,\cdot)$. 

In what follows, $C_0\geq1$ will always denote an absolute universal constant depending on the kernel $K$, the dimension $d$ and the period $L$ and whose value may change from line to line. Inspired by the discussion towards the end of \ref{pfthm1}, we will ``dynamically rescale'' a stationary solution to the one dimensional viscous Burgers equation. We start by defining the stationary modulus of continuity
\begin{equation}\label{statmodpfthm2}
\omega(\sigma):=
\begin{cases}
	2\sigma-\sigma^{3/2}, &\sigma\in[0,\delta],\\
	\tanh\left((\sigma-\delta)+\mu\right),&\sigma>\delta,
\end{cases}
\end{equation}
where $\delta\in(0,1]$ is a small parameter to be determined later, and $\mu\in(0,1]$ is chosen to ensure continuity at $\delta$. It is fairly straightforward to check that $\omega$ is a strong modulus of continuity according to Definition \ref{defmod}: it is positive, piecewise smooth, nondecreasing and satisfies $\omega(0)=0$, $\omega'(0)=2$ and $\omega''(\sigma)=-3\sigma^{-1/2}/4$ for $\sigma\in(0,\delta]$. The last condition we have to worry about is $\omega'(\delta^+)\leq\omega'(\delta^-)$, and this can be guaranteed by choosing $\delta$ small enough, since $\omega'(\delta^+)=\text{sech}^2(\mu)\leq1\leq 2=\omega'(0)$. Next, for any given initial data $u_0$, we chose $\lambda_0>1$ depending on $\delta$ and $\|u_0\|_{W^{1,\infty}}$ such that the vector field $u_0$ has $\lambda_0\omega(\lambda_0\xi)$ as a strict modulus of continuity, which can be done by virtue of Lemma \ref{buildmod}. The penultimate step in our construction is defining $\lambda\in C^1([0,\infty))$ as the solution to 
\begin{equation}\label{deflambda}
	\lambda'=C_{0}\lambda\log(\lambda),\quad \lambda(0)=\lambda_0\geq e,
\end{equation}
given explicitly by $\lambda(t)=\exp\left[\log(\lambda_0)\exp\left(C_{0}t\right)\right]$. Finally, we define 
\begin{equation}\label{defomega1}
\Omega(t,\xi):=\lambda(t)\omega(\lambda(t)\xi),\quad (t,\xi)\in[0,\infty)\times[0,\infty),
\end{equation}
where the fact that $\lambda$ is nondecreasing guarantees that $\Omega$ is a time-dependent strong modulus of continuity according to Definition \ref{timdepmoddef}, and thus satisfies \eqref{BCmodthm2}-\eqref{conczerothm2}. Let us now check that $\Omega$ as defined in \eqref{defomega1} indeed satisfies \eqref{modevol2}. It will be useful to introduce a ``dynamic variable'' $\sigma:=\xi\lambda(t)$, in which case a straightforward calculation and a change in variable in the integral would reduce \eqref{modevol2} to 
\begin{equation}\label{modevol2simp}
\lambda'\omega(\sigma)+\lambda'\sigma\omega'(\sigma)-4\lambda^3\omega''(\sigma)\geq \lambda^3\omega(\sigma)\omega'(\sigma)+C_{K,d}\lambda\left[\int_{0}^{3\sigma}\frac{\omega(\eta)}{\eta}d\eta+\sigma^{\rho}\int_{3\sigma}^{\infty}\frac{\omega(\eta)}{\eta^{1+\rho}}d\eta\right],
\end{equation}
and by periodicity, we would require that \eqref{modevol2simp} to hold true for every $(t,\sigma)\in(0,\infty)\times(0,2L\lambda\sqrt{d}]$. We consider the case when $\sigma\in(0,\delta)$ and $\sigma\in(\delta,2L\lambda\sqrt{d}]$ separately. 
\subsubsection{The case when $\sigma\in(0,\delta)$}
In this region, viscosity dominates and we rely only on the term $\omega''$ to absorb the instabilities arising from the nonlinear term and the nonlocal one. As $\lambda$ and $\omega$ are both non-decreasing and since $\lambda\geq1$, \eqref{modevol2simp} reduces to making sure that
\begin{equation*}\label{c1}
	4\omega''(\sigma)+\omega(\sigma)\omega'(\sigma)+C_{K,d}\left[\int_{0}^{3\sigma}\frac{\omega(\eta)}{\eta}d\eta+\sigma^{\rho}\int_{3\sigma}^{\infty}\frac{\omega(\eta)}{\eta^{1+\rho}}d\eta\right]\leq0.
\end{equation*}
Since $\omega(\sigma)\leq 2\sigma$ for any $\sigma\geq0$, we can estimate the first integral by
\begin{equation*}\label{c2}
	\int_{0}^{3\sigma}\frac{\omega(\eta)}{\eta}d\eta\leq 6\sigma, 
\end{equation*}
while requiring $3\delta\leq1$, we bound the second one by
\begin{equation}\label{c3}
	\sigma^{\rho}\int_{3\sigma}^{\infty}\frac{\omega(\eta)}{\eta^{1+\rho}}d\eta\leq \sigma^{\rho}\int_{3\sigma}^{1}\frac{\omega(\eta)}{\eta^{1+\rho}}d\eta+\sigma^{\rho}\int_{1}^{\infty}\frac{\omega(\eta)}{\eta^{1+\rho}}d\eta
\end{equation}
For the first integral in \eqref{c3}, we use $\omega(\sigma)\leq 2\eta$ and $\eta^{1+\rho}\geq\eta^2$ while for the second we use $\tanh(\zeta)\leq1$ for any $\zeta\geq0$ to get 
\begin{equation*}\label{c6}
\sigma^{\rho}\int_{3\sigma}^{\infty}\frac{\omega(\eta)}{\eta^{1+\rho}}d\eta\leq -2\sigma^{\rho}\log(3\sigma)+\rho^{-1}\sigma^{\rho}\leq -\rho^{-1}3\sigma^{\rho}\log(3\sigma).
\end{equation*}
From the definition of $\omega$ \eqref{statmodpfthm2}, we have 
\[
4\omega''(\sigma)+\omega(\sigma)\omega'(\sigma)\leq -3\sigma^{-1/2}+4\sigma.
\]
Therefore, putting all this together,
\begin{align*}
&4\omega''(\sigma)+\omega(\sigma)\omega'(\sigma)+C_{K,d}\left[\int_{0}^{3\sigma}\frac{\omega(\eta)}{\eta}d\eta+\sigma^{\rho}\int_{3\sigma}^{\infty}\frac{\omega(\eta)}{\eta^{1+\rho}}d\eta\right]\\
&\leq-3\sigma^{-1/2}+4\sigma-C_{K,d}\sigma^{\rho}\log(\sigma),	
\end{align*}
and the right hand side is certainly negative provided $\sigma\in(0,\delta)$, and $\delta$ is small enough.
\subsubsection{The case when $\sigma\in(\delta,2L\lambda\sqrt{d}]$}
Noting that for $\sigma>\delta$, $\omega(\sigma)$ is a stationary solution to the one dimensional viscous Burgers equation, we see that $-4\lambda^3\omega''(\sigma)\geq \lambda^3\omega(\sigma)\omega'(\sigma)$ is always true in this region. Thus, \eqref{modevol2simp} now reduces to making sure that
\begin{equation}\label{c8}
\lambda'\omega(\sigma)+\lambda'\sigma\omega'(\sigma)\geq C_{K,d}\lambda\left[\int_{0}^{3\sigma}\frac{\omega(\eta)}{\eta}d\eta+\sigma^{\rho}\int_{3\sigma}^{\infty}\frac{\omega(\eta)}{\eta^{1+\rho}}d\eta\right].
\end{equation}
We have two positive terms on the left-hand side. Note that the first integral in \eqref{c8} grows logarithmically in $\sigma$, while $\omega(\sigma)$ is bounded, and $\sigma\omega'(\sigma)$ is very small when $\sigma$ is large. This is exactly where we use periodicity in an essential way, and is the main obstacle in obtaining a similar result in the whole space. Further by invoking the change in variables in the integral, we ``transferred'' the singularity from 0 to the upper limit, hence the factor of $\log(\lambda)$ in \eqref{deflambda}. Since $\sigma\leq 2L\lambda\sqrt{d}$, we get
\begin{equation*}\label{c9}
\int_{0}^{3\sigma}\frac{\omega(\eta)}{\eta}d\eta\leq\int_{0}^{\delta}\frac{\omega(\eta)}{\eta}d\eta+\int_{\delta}^{6L\lambda\sqrt{d}}\frac{\omega(\eta)}{\eta}d\eta\leq2\delta+\log(6L\lambda\sqrt{d})-\log(\delta),
\end{equation*}
where we used $\omega(\sigma)\leq2 \sigma$ and $\omega(\sigma)\leq 1$ appropriately. We now also use $\omega(\sigma)\leq 1$ to get
\begin{equation*}\label{c10}
\sigma^{\rho}\int_{3\sigma}^{\infty}\frac{\omega(\eta)}{\eta^{1+\rho}}d\eta\leq\sigma^{\rho}\int_\sigma^{\infty}\eta^{-1-\rho} d\eta=\rho^{-1}.
\end{equation*}
Putting all this together, along with the fact that that both $\lambda$ and $\omega$ are nondecreasing, we get
\begin{align*}
&\lambda'\omega(\sigma)+\lambda'\sigma\omega'(\sigma)-C_{K,d}\lambda\left[\int_{0}^{3\sigma}\frac{\omega(\eta)}{\eta}d\eta+\sigma^{\rho}\int_{3\sigma}^{\infty}\frac{\omega(\eta)}{\eta^{1+\rho}}d\eta\right]\geq\lambda'(t)\omega(\delta)-C_{0}\lambda\log(\lambda)\end{align*}
and the right hand side is certainly positive by choice of $\lambda$ from \eqref{deflambda} with an appropriate $C_0$ depending on $d$, $K$ and $L$.
\subsection{Proof of Theorem \ref{thm3}}\label{pfthm3}
It should be clear at this point that our aim is to construct $\Omega$ such that 
\begin{align}
&\partial_t\Omega(t,\xi)-D_{\alpha}[\Omega](t,\xi)-g(t) \xi^{\beta}\partial_\xi\Omega(t,\xi)\geq0,&&\forall (t,\xi)\in(0,\infty)\times(0,\infty),\label{modevol3}\\
&|u_0(x)-u_0(y)|<\Omega(0,|x-y|),&&\forall x\neq y,\label{IC3}
\end{align}
with $D_1[\Omega](t,\xi)=4\partial^2_{\xi}\Omega(t,\xi)$ and 
\begin{align}\label{fracdifftr}
D_{\alpha}[\Omega](t,\xi)=&C_{\alpha}\int_0^{\xi/2}\frac{\Omega(t,\xi+2\eta)+\Omega(t,\xi-2\eta)-2\Omega(t,\xi)}{\eta^{1+2\alpha}}d\eta\nonumber\\
&+C_{\alpha}\int_{\xi/2}^{\infty}\frac{\Omega(t,\xi+2\eta)-\Omega(t,2\eta-\xi)-2\Omega(t,\xi)}{\eta^{1+2\alpha}}d\eta,
\end{align}
when $\alpha\in(0,1)$, together with the requirements \eqref{BCmodthm2}-\eqref{conczerothm2}. Here, we repeated the same analysis done in \S\ref{pfthm1} and \S\ref{pfthm2}, except that we used the H\"older condition assumed on the drift velocity to take care of the transport part, and extracted local dissipation from $(-\Delta)^{\alpha}$ via \eqref{fraclap}-\eqref{defdalpha} when $\alpha\in(0,1)$. Inspired by the proof of Theorem \ref{thm2}, the idea is to rescale a stationary modulus of continuity in matter that allows the parabolic part to win the tug-of-war against the transport. Let us assume that 
\[
\Omega(t,\xi):=\omega\left(\mu\xi\right),\quad \mu=\mu(t).
\]
As before, we work with the ``dynamic variable'' $\sigma:=\mu\xi$, and we invite the reader to readily verify that $D_{\alpha}[\Omega](t,\xi)=\mu^{2\alpha}D_{\alpha}[\omega](\sigma)$. Hence, we need to choose $\mu$ and $\omega$ such that
\[
\frac{\mu'}{\mu}\sigma\omega'(\sigma)-\mu^{2\alpha}D_{\alpha}[\omega](\sigma)-g\mu^{1-\beta}\sigma^\beta\omega'(\sigma)\geq0.
\]
Now, over small distances (when $\sigma\in(0,1]$), it should be clear that no matter how we choose $\mu$ (even if $\mu'=g\mu^{2-\beta}$), we can only rely on the Laplacian, as $\sigma\leq \sigma^{\beta}$. Thus, we would need to choose $\mu$ such that $\mu^{2\alpha+\beta-1}\approx g$, in order for both terms to be comparable. This makes the most natural choice for $\mu=g^{\gamma}$ where 
\[
\gamma:=\frac{1}{\beta+2\alpha-1},
\]
exactly as was defined in the statement of Theorem \ref{thm3}. This also tells us why the end point $\beta=1-2\alpha$ case is delicate. We still have to take care of the part coming from the time derivative, which now reads
\[
\gamma\frac{g'}{g}\sigma\omega'(\sigma).
\]
If $g$ oscillates, then this term becomes very hard to control in a uniform fashion (even if we consider an $\Omega(t,\xi)=\lambda(t)\omega(\mu\xi)$), which is the main reason behind our non-decreasing assumption on $g$, and somewhat explains why it is more natural to work with the quantity \eqref{altcondbintro} when using this ansatz. 

That being said, let us now proceed to constructing $\omega$. In what follows, $C_{\alpha}>0$ is a positive constant depending only on $\alpha$ and whose value may change from line to line. We let $\delta,r\in(0,1)$ be two parameters to be specified later, and define
\begin{equation}\label{defmodddstat}
\omega(\sigma):=
\begin{cases}
\sigma-\sigma^{2-r}, &\sigma\in[0,\delta],\\
\omega_{\mathcal{R}}(\sigma), &\sigma>\delta,
\end{cases}
\end{equation}
with $\omega_{\mathcal{R}}$ being defined as a solution to
\begin{equation}\label{ddode}
\begin{cases}
C_\alpha\sigma^{2-2\alpha}\omega_{\mathcal{R}}''(\sigma)+\sigma^{\beta}\omega_{\mathcal{R}}'(\sigma)=0,& \sigma\geq\delta,\\
\omega_{\mathcal{R}}(\delta)=\delta-\delta^{2-r},\\
\omega_{\mathcal{R}}'(\delta)=\frac{1}{2}-(2-r)\delta^{1-r}
\end{cases}
\end{equation}
given explicitly by 
\begin{equation*}\label{defmoddd}
\omega_{\mathcal{R}}(\sigma)=\omega_{\mathcal{R}}(\delta)+\omega_{\mathcal{R}}'(\delta)\int_{\delta}^{\sigma}\exp\left(\frac{-\eta^{\beta+2\alpha-1}}{C_\alpha[\beta+2\alpha-1]}\right)d\eta,
\end{equation*}
for some positive $C_{\alpha}$ that depends only on $\alpha$. One arrives at the above solution simply by making the substitution $v=\omega_{\mathcal{R}}'$ in the ODE appearing in \eqref{ddode}. The reader at this point may readily verify that $\omega$ as defined in \eqref{defmodddstat} is a strong modulus of continuity according to Definition \eqref{defmod} that happens to be concave. Since we need to satisfy \eqref{IC3}, we consider
\begin{equation}\label{defmodddtr}
\Omega(t,\xi):=B\omega(Bg^{\gamma}\xi),
\end{equation}
for some $B\geq1$ chosen according to Lemma \ref{buildmod} (recall that we assumed $g\geq1$). Note that $\Omega$ as defined in \eqref{defmodddtr} is indeed a time-dependent modulus of continuity. Furthermore, since solutions to \eqref{classicaldd} satisfy the maximum principle $\|u(t,\cdot)\|_{L^{\infty}}\leq \|u(0,\cdot)\|_{L^{\infty}}$ for all time, the hypothesis of Proposition \ref{breakthroughwholespace} is satisfied even though $\Omega(t,\cdot)$ is bounded (provided $B$ is large enough depending on $\|u_0\|_{L^{\infty}}$). Although we remark that in the periodic setting, one does not need to use the maximum principle. Of course, with such a choice of $\Omega$, we would need to require $g$ to be $C^1$ in time. We can make such an assumption without loss in generality, since the final estimate wouldn't depend on the $C^1$ norm of $g$. That is, we could prove such an estimate with a mollified drift velocity, obtain uniform estimates, then pass to limit using standard compactness arguments.

Let us now proceed to proving that $\Omega$ as defined in \eqref{defmodddtr}  is preserved. If we let $\sigma:=Bg^{\gamma}\xi$ and assume $g$ is non-decreasing, our task reduces to making sure that 
\[
B^{1+2\alpha}g^{2\alpha\gamma}\left[D_{\alpha}[\omega](\sigma)+B^{1-\beta-2\alpha}\sigma^{\beta}\omega'(\sigma)\right]\leq 0.
\]
As we assumed that $\beta+2\alpha-1>0$, $B\geq1$, and $g\geq1$, the problem boils down to
\begin{equation}\label{ineqdd}
D_{\alpha}[\omega](\sigma)+\sigma^{\beta}\omega'(\sigma)\leq 0.
\end{equation} 
Let us now estimate the dissipative part. First of all, notice that since $\omega$ is concave, both integrals appearing in \eqref{fracdifftr} are strictly negative. We will only make use of the first integral, and disregard the second one, i.e.
\[
D_{\alpha}[\omega](\sigma)\leq C_{\alpha}\int_0^{\sigma/2}\frac{\omega(\sigma+2\eta)+\omega(\sigma-2\eta)-2\omega(\sigma)}{\eta^{1+2\alpha}}d\eta.
\] 
To end up with a useful estimate, let us first assume that $\omega$ is smooth (the general case will be taken care of later). The second order Taylor expansion of $\omega$ tells us:
\[
\omega(\sigma\pm2\eta)=\omega(\sigma)\pm2\eta\omega'(\sigma)+2\eta^2\omega''(\sigma)\pm\frac{4}{3}\eta^3\omega'''(\sigma^*),
\]
for some $\sigma^*\in[\sigma-2\eta,\sigma+2\eta]$. It follows that 
\[
\omega(\sigma+2\eta)+\omega(\sigma-2\eta)-2\omega(\sigma)=4\eta^2\omega''(\sigma),
\]
which renders the left hand side of \eqref{ineqdd} controlled by 
\[
D_{\alpha}[\omega](\sigma)+\sigma^{\beta}\omega'(\sigma)\leq C_{\alpha}\sigma^{2-2\alpha}\omega''(\sigma)+\sigma^{\beta}\omega'(\sigma),
\]
with a possibly different $C_\alpha>0$. From \eqref{ddode}, we see that regardless of the value of $\delta, r\in(0,1)$, inequality \eqref{ineqdd} is always satisfied when $\sigma>\delta$. As for the case when $\sigma\in(0,\delta]$, we see that it reduces to making sure that
\[
-(2-r)(1-r)C_{\alpha}\sigma^{2-2\alpha-r}+\sigma^{\beta}\leq0.
\]
Equivalently, we need $\sigma^{\beta+2\alpha+r-2}\leq (2-r)(1-r)C_{\alpha}$, and for that to be possible, we must first guarantee that one can choose an $r\in(0,1)\cap(2-2\alpha-\beta,1)$. This is where the condition $\beta+2\alpha-1>0$ comes into play again, as it would imply that the set $(0,1)\cap(2-2\alpha-\beta,1)$ is not empty. With that in mind, one can choose the midpoint
\begin{equation}\label{defr}
r:=\frac{1}{2}+\frac{2-2\alpha-\beta}{2}.
\end{equation}
Having chosen $r$ as above, we finally chose $\delta$ small enough, depending only on the parameters $\alpha$ and $\beta$ such that $\delta^{\beta+r+2\alpha-2}\leq (2-r)(1-r)C_{\alpha}$. This would conclude the proof, as we have now ruled out the possibility of \eqref{modevol3} happening. This argument hinges upon the estimate $D_{\alpha}[\omega](\sigma)\leq C_{\alpha}\sigma^{2-2\alpha}\omega''(\sigma)$, which in turn relied upon assuming $\omega$ is $C^2$. One way to overcome this is as follows. We first extend $\omega$ in an odd fashion about $\sigma=0$, let $\chi_{\epsilon}$ be a standard (even) mollifier at level $\epsilon>0$, and define $\omega_{\epsilon}:=\omega*\chi_{\epsilon}$. The sequence of functions $\omega_{\epsilon}$ are all smooth on $[0,\infty)$. It follows that by the previous argument
\[
\int_0^{\sigma/2}\frac{\omega_{\epsilon}(\sigma+2\eta)+\omega_{\epsilon}(\sigma-2\eta)-2\omega_{\epsilon}(\sigma)}{\eta^{1+2\alpha}}d\eta= \frac{2^{2\alpha-1}}{1-\alpha}\sigma^{2-2\alpha}\omega_{\epsilon}''(\sigma).
\]
By virtue of Remark \ref{imprmk} and the fact that by construction $\omega'(\delta^+)<\omega'(\delta^-)$, we know $\sigma\neq \delta$, and as $\sigma\neq0$ as well, we may pass to limit $\epsilon\rightarrow0^+$ to get the desired estimate, since $\omega$ is smooth away from $\{0,\delta\}$. Of course, this argument has to be done only when $\alpha\in(0,1)$.

\subsection{Proof of Theorem \ref{thm4}: The Critical Case}\label{pfthm4a}
Let us now move on to adding incompressibility constraints. In this case, the inequality reads  
\begin{align}
&\partial_t\Omega(t,\xi)-D_{\alpha}[\Omega](t,\xi)-g(t) \xi^{\beta}\partial_\xi\Omega(t,\xi)-I(t,\xi)\geq0,&&\forall (t,\xi)\in(0,\infty)\times(0,\infty),\label{modevol4}\\
&|u_0(x)-u_0(y)|<\Omega(0,|x-y|),&&\forall x\neq y,\label{IC4}
\end{align}
where we now use Lemma \ref{pressest} with $\omega_{b}(t,\xi)=g(t)\xi^{\beta}$ to estimate the gradient of the pressure and obtain
\begin{equation*}\label{defI}
I(t,\xi)=C_dg(t)\left[\int_{0}^{\xi}\frac{\Omega(t,\eta)}{\eta^{2-\beta}}d\eta+\xi^{\beta}\int_{\xi}^{\infty}\frac{\Omega(t,\eta)}{\eta^{2}}d\eta+\frac{\xi^{\beta}\Omega(t,\xi)}{(1-\beta)\xi}\right]	.
\end{equation*}
Let us try to use the same ideas as in \S\ref{pfthm3}, and let us focus on the case with classical diffusion ($\alpha=1$). By using the same $\Omega(t,\xi)=\omega(\mu\xi)$ (with $\mu\gtrsim g^{\gamma}$) and performing similar calculation, we realize that we need to guarantee at least 
\begin{equation}\label{nweqpfthm4}
4\omega''(\sigma)+\sigma^{\beta}\omega'(\sigma)+\int_0^{\sigma}\eta^{\beta-2}\omega(\eta)d\eta\leq0.
\end{equation}
While this has a solution, we can no longer guarantee that $\omega$ is non-decreasing when $\sigma$ is large. To see this, if we set 
\[
\kappa:=\int_0^1\eta^{\beta-2}\omega(\eta)d\eta,
\]
then we note that for $\sigma\geq1$, if \eqref{nweqpfthm4} is true, we must have 
\[
4\omega''(\sigma)+\sigma^{\beta}\omega'(\sigma)\leq-\kappa,
\]
rendering
\[
\omega'(\sigma)\leq\left[\omega'(1)\exp\left(\frac{1}{4(\beta+1)}\right)-\frac{\kappa}{4}\int_1^{\sigma}\exp\left(\frac{\eta^{\beta+1}}{4(\beta+1)}\right)d\eta\right]\exp\left(-\frac{\sigma^{\beta+1}}{4(\beta+1)}\right),\quad \sigma\geq1.
\]
This is bad news, as we cannot seem to be able to find a way to make the argument work while relaxing the non-decreasing property required in Definition \ref{defmod}. In particular, Propositions \ref{breakthrough} and \ref{breakthroughwholespace} may no longer be true. 

As the reader may have realized by now, the general idea is that over small distances the  diffusive term is very strong, and can absorb almost any instabilities that may arise from the transport part. The difficulty lies in ruling out the breakthrough scenario over large distances. As the above heuristics suggest, we simply cannot rely on the dissipative part of the equation to take care of this over large distances when taking into account something besides the transport, while still maintaining the properties of being a modulus of continuity. Recall that we ran into a similar difficulty when proving Theorems \ref{thm1} and \ref{thm2}. Thus, if there is any hope in closing the argument here, one has to rely on the full parabolic operator, not just the elliptic one. That is, we need to consider a modulus of continuity of the form
\begin{equation}\label{deftransmodnse}
\Omega(t,\xi):=\lambda\omega(g^{\gamma}\xi),\ \lambda=\lambda(t),\ g=g(t).
\end{equation}
As the pressure scales like the transport term, it is natural to use the same choice of 
\begin{equation}\label{defgammaproof}
\gamma:=\frac{1}{\beta+2\alpha-1}.
\end{equation}
Further, as was done in the pure transport case, whenever $\sigma=g^\gamma\xi$ is of order 1 (that is, $\xi\in(0,g^{-\gamma})$), the Laplacian should be strong enough to balance everything out. It's power when $\sigma\gtrsim1$ can at most balance out the transport, as the above argument suggest. The pressure needs to be balanced out by the term $\lambda'(t)\omega(\sigma)$  when $\sigma\gtrsim 1$. That is 
\[
\lambda'\omega(\sigma)\gtrsim g\xi^{\beta-1}\lambda\omega(\sigma)=g^{1-\gamma(\beta-1)}\lambda\sigma^{\beta-1}\omega(\sigma),
\]
and from \eqref{defgammaproof}, we see that 
\begin{equation}\label{defpstar}
\lambda'(t)\approx g^{p*}(t)\lambda (t),\quad p^*:=\frac{2\alpha}{\beta+2\alpha-1},
\end{equation}
hence the reason why this particular construction would require a critical assumption on the drift.

Let us now make this rigorous. The easy choice would be to use the exact same stationary modulus from the previous section: as the equation is linear, multiplying it by a factor of $\lambda$ will not mess up the balance between transport and diffusion, while giving power to the time derivative. And this would work, we can use it to guarantee that \eqref{modevol4} is always satisfied. However, we remind the reader that the stationary modulus of continuity constructed in \S\ref{pfthm3} given by \eqref{defmodddstat}-\eqref{ddode} is bounded. This was not an issue there, since we had a maximum principle. Such a maximum principle is not readily available when adding incompressibility constraints, and so this construction would only work in the periodic setting. As we already saw that in order to balance out the instabilities that may arise from the pressure by moduli of continuity of the form \eqref{deftransmodnse} will downgrade our assumptions from supercritical to critical, we no longer have to rely purely on the dissipative term to balance out the transport over all $\sigma\in(0,\infty)$, provided we take care of it via the time derivative. This means we are free to use any concave modulus of continuity when $\sigma\geq1$, in particular, we can use an unbounded function, which always satisfy the hypothesis of Proposition \ref{breakthroughwholespace}.

We start by defining our $\Omega$ and then proceed to show that it must be preserved. As was done previously, given $\alpha\in(0,1]$ and $\beta\in (0,1)\cap(1-2\alpha,1)$, we set $r$ as in \eqref{defr}. Having done that, we let $\delta\in(0,1)$ be a small parameter to be defined later and we set
\begin{equation}\label{defstatmodnse}
\omega(\sigma):=
\begin{cases}
2\sigma-\sigma^{2-r}, &\sigma\in[0,\delta],\\
\delta\log(\sigma/\delta)+2\delta-\delta^{2-r}, &\sigma\in(\delta,\infty).
\end{cases}
\end{equation}
Next, we choose a $B\geq1$ such that the initial data strictly obeys $\omega(B\xi)$ according to Lemma \ref{buildmod} (since $\omega$ is unbounded, we do not need to consider $B\omega(B\xi)$). Nothing is special about the logarithmic function, one could choose for instance something that grows like $\sigma^{\kappa}$, for some $\kappa\in(0,1)$. This could potentially yield a sharper constant $B$, but we find working with logarithms to be somewhat easier. The proof for the critical case and super-critical case will require different choices of $\lambda$ and $\mu$, but is otherwise identical. Thus, rather than perform the same calculations twice, let us for now assume $\lambda=\lambda(t)\geq1$, $\mu=\mu(t)\geq g^{\gamma}(t)$ and define $\Omega$ as 
\begin{equation}\label{defomegatranscrit}
\Omega(t,\xi):=\lambda\omega(B\mu\xi).
\end{equation}
We point out that $\Omega$ as defined above satisfies \eqref{IC4}, since we assumed $g(t)\geq1$ for (almost) every $t$. Further, by virtue of remark \ref{imprmk}, we need not to worry about $\sigma=\delta$. As was done in \S\ref{pfthm3}, if we set  $\sigma:=B\mu\xi$, we know that 
\begin{equation*}\label{disspnse}
-D_{\alpha}[\Omega](t,\xi)=-(B\mu)^{2\alpha}\lambda D_{\alpha}[\omega](\sigma)\geq -C_{\alpha}(B\mu)^{2\alpha}\lambda \sigma^{2-2\alpha}\omega''(\sigma).
\end{equation*}
Let us now look at the transport term: 
\begin{equation*}\label{transportnse}
g(t)\xi^{\beta}\partial_\xi\Omega(t,\xi)=B\mu g\lambda \xi^{\beta}\omega'(\sigma)=(B\mu)^{1-\beta}g\lambda\sigma^{\beta}\omega'(\sigma).
\end{equation*}
Analogously, we have 
\begin{equation*}\label{singnse}
g\xi^{\beta-1}\Omega(t,\xi)=(B\mu)^{1-\beta}g\lambda\sigma^{\beta-1}\omega(\sigma),
\end{equation*}
and invoking a change of variable in the integrals (with a slight abuse of notation)
\begin{equation*}\label{intnse}
g(t)\left[\int_{0}^{\xi}\frac{\Omega(t,\eta)}{\eta^{2-\beta}}d\eta+\xi^{\beta}\int_{\xi}^{\infty}\frac{\Omega(t,\eta)}{\eta^{2}}d\eta\right]=(B\mu)^{1-\beta}g\lambda\left[\int_{0}^{\sigma}\frac{\omega(\eta)}{\eta^{2-\beta}}d\eta+\sigma^{\beta}\int_{\sigma}^{\infty}\frac{\omega(\eta)}{\eta^{2}}d\eta\right].
\end{equation*}
Thus, \eqref{modevol4} now reduces to the requirement
\begin{align*}
\lambda'\omega(\sigma)+&\frac{\mu'}{\mu}\lambda\sigma\omega'(\sigma)-C_{\alpha}(B\mu)^{2\alpha}\lambda \sigma^{2-2\alpha}\omega''(\sigma)\nonumber\\
&-C_d(B\mu)^{1-\beta}g\lambda\left[\sigma^{\beta}\omega'(\sigma)+\frac{1}{1-\beta}\sigma^{\beta-1}\omega(\sigma)+\int_{0}^{\sigma}\frac{\omega(\eta)}{\eta^{2-\beta}}d\eta+\sigma^{\beta}\int_{\sigma}^{\infty}\frac{\omega(\eta)}{\eta^{2}}d\eta\right]\geq0.
\end{align*}
This can be simplified via an integration by parts:
\[
\frac{1}{1-\beta}\sigma^{\beta-1}\omega(\sigma)+\int_{0}^{\sigma}\frac{\omega(\eta)}{\eta^{2-\beta}}d\eta=\frac{1}{1-\beta}\int_{0}^{\sigma}\frac{\omega'(\eta)}{\eta^{1-\beta}}d\eta,
\]
to get 
\begin{align}\label{modevolnse}
\lambda'\omega(\sigma)+&\frac{\mu'}{\mu}\lambda\sigma\omega'(\sigma)-C_{\alpha} (B\mu)^{2\alpha}\lambda\sigma^{2-2\alpha}\omega''(\sigma)\nonumber\\
&-C_d(B\mu)^{1-\beta}g\lambda\left[\sigma^{\beta}\omega'(\sigma)+\frac{1}{1-\beta}\int_{0}^{\sigma}\frac{\omega'(\eta)}{\eta^{1-\beta}}d\eta+\sigma^{\beta}\int_{\sigma}^{\infty}\frac{\omega(\eta)}{\eta^{2}}d\eta\right]\geq0.
\end{align}
As was discussed in the case of the transport-diffusion, the ``natural'' choice for $\mu$ is 
\[
\mu(t):=g^{\gamma}(t),\quad \gamma=\frac{1}{2\alpha+\beta-1},
\]
and from the heuristic argument \eqref{defpstar}, we choose 
\[
\lambda'(t)=C_{d,\beta,\alpha}B^{1-\beta}g^{p^*}(t)\lambda(t),\quad \lambda(0)=1,
\]
for some constant $C_{d,\beta,\alpha}>0$ to be determined and 
\[
p^*:=\frac{2\alpha}{2\alpha+\beta-1}.
\]
With such a choice, and the assumption that $g$ is non-decreasing, \eqref{modevolnse} now reduces to making sure that
\begin{align}\label{modevolnsecrit}
\lambda'\omega(\sigma)&-C_{\alpha}\lambda B^{2\alpha}g^{p^*}\sigma^{2-2\alpha}\omega''(\sigma)\nonumber\\
&-C_dB^{1-\beta}g^{p^*}\lambda\left[\sigma^{\beta}\omega'(\sigma)+\frac{1}{1-\beta}\int_{0}^{\sigma}\frac{\omega'(\eta)}{\eta^{1-\beta}}d\eta+\sigma^{\beta}\int_{\sigma}^{\infty}\frac{\omega(\eta)}{\eta^{2}}d\eta\right]\geq0.
\end{align}
We analyze the case when $\sigma\in(0,\delta)$ and $\sigma\in(\delta,\infty)$ separately.
\subsubsection{The case when $\sigma\in(0,\delta)$}\label{seccritnsesmalldist}
From $\omega'(\sigma)\leq2$ we see that
\[
\sigma^{\beta}\omega'(\sigma)+\frac{1}{1-\beta}\int_{0}^{\sigma}\frac{\omega'(\eta)}{\eta^{1-\beta}}d\eta\leq C_{\beta}\sigma^{\beta}.
\]
For the other integral, we use $\omega(\eta)\leq2\eta$ on $[\sigma,\delta]$ and $\omega(\eta)\leq\delta\log(\eta/\delta)+2\delta$ on $(\delta,\infty)$ to get
\[
\int_{\sigma}^{\infty}\frac{\omega(\eta)}{\eta^{2}}d\eta=\int_{\sigma}^{\delta}\frac{\omega(\eta)}{\eta^{2}}d\eta+\int_{\delta}^{\infty}\frac{\omega(\eta)}{\eta^{2}}d\eta\leq \log(\delta/\sigma)+\delta\int_{\delta}^{\infty}\frac{\log(\eta/\delta)+2}{\eta^{2}}d\eta\leq 3-2\log(\sigma),
\]
from which we get, for $\sigma\in(0,\delta)\subset(0,1)$,
\begin{equation}\label{destabsmalldist}
\sigma^{\beta}\omega'(\sigma)+\frac{1}{1-\beta}\int_{0}^{\sigma}\frac{\omega'(\eta)}{\eta^{1-\beta}}d\eta+\sigma^{\beta}\int_{\sigma}^{\infty}\frac{\omega(\eta)}{\eta^{2}}d\eta\leq -C_{\beta}\sigma^{\beta}\log(\sigma).
\end{equation}
As $B\geq1$, $\beta+2\alpha-1>0$ and $\lambda'\geq0$, if we plug this into the left-hand side of \eqref{modevolnsecrit} we get a lower bound
\[
-C_\alpha B^{2\alpha}g^{p^*}\lambda\left[\sigma^{2-2\alpha}\omega''(\sigma)-C_{d,\alpha,\beta}\sigma^{\beta}\log(\sigma)\right],
\]
and as $\omega''(\sigma)=-(2-r)(1-r)\sigma^{-r}$, we now need 
\[
-(2-r)(1-r)-C_{d,\alpha,\beta}\sigma^{r+2\alpha+\beta-2}\log(\sigma)\leq0.
\]
Our choice of $r$ from \eqref{defr} ensures that $r+2\alpha+\beta-2>0$, and so one can chose a small enough $\delta=\delta_{d,\alpha,\beta}$ such that the above is true whenever $\sigma\in(0,\delta)$. 

\subsubsection{The case when $\sigma\in(\delta,\infty)$}\label{seccritnselargedist}
Let us start by using $\omega'(\sigma)\leq 2$ always and $\omega'(\sigma)=\delta\sigma^{-1}$ on $(\delta,\infty)$ to get 
\[
\int_{0}^{\sigma}\frac{\omega'(\eta)}{\eta^{1-\beta}}d\eta=\int_{0}^{\delta}\frac{\omega'(\eta)}{\eta^{1-\beta}}d\eta+\int_{\delta}^{\sigma}\frac{\omega'(\eta)}{\eta^{1-\beta}}d\eta\leq2\beta^{-1}\delta^{\beta}+\delta\int_\delta^{\sigma}\eta^{\beta-2}d\eta\leq C_\beta\delta^\beta.
\]
As for the second integral, we may integrate by parts to get 
\[
\sigma^{\beta}\int_{\sigma}^{\infty}\frac{\omega(\eta)}{\eta^{2}}d\eta=\sigma^{\beta}\int_\sigma^{\infty}\frac{\omega'(\eta)}{\eta}d\eta+\sigma^{\beta-1}\omega(\sigma)=\delta\sigma^{\beta-1}+\sigma^{\beta-1}\omega(\sigma), 
\]
As $\omega'(\sigma)=\delta\sigma^{-1}$, we now have the bound 
\[
\sigma^{\beta}\omega'(\sigma)+\frac{1}{1-\beta}\int_{0}^{\sigma}\frac{\omega'(\eta)}{\eta^{1-\beta}}d\eta+\sigma^{\beta}\int_{\sigma}^{\infty}\frac{\omega(\eta)}{\eta^{2}}d\eta\leq 2\delta\sigma^{\beta-1}+C_{\beta}\delta^{\beta}+\sigma^{\beta-1}\omega(\sigma),
\]
which upon noting that $\omega(\sigma)\geq\delta$ and $\sigma^{\beta-1}\leq\delta^{\beta-1}$ whenever $\sigma\in(\delta,\infty)$ we end up with 
\begin{equation}\label{destablargedist}
\sigma^{\beta}\omega'(\sigma)+\frac{1}{1-\beta}\int_{0}^{\sigma}\frac{\omega'(\eta)}{\eta^{1-\beta}}d\eta+\sigma^{\beta}\int_{\sigma}^{\infty}\frac{\omega(\eta)}{\eta^{2}}d\eta\leq C_{\beta}\delta^{\beta-1}\omega(\sigma).
\end{equation}
Plugging into the left hand side of \eqref{modevolnsecrit} and using the concavity of $\omega$ we see that our requirement now is 
\[
\lambda'\omega(\sigma)-C_{d,\beta}(B/\delta)^{1-\beta}g^{p^*}\lambda\omega(\sigma)\geq0,
\]
which is satisfied provided we define $C_{d,\alpha,\beta}:=C_{d,\beta}\delta^{\beta-1}$ and set
\[
\lambda'(t)=C_{d,\alpha,\beta}B^{1-\beta}g^{p^*}(t)\lambda(t),\quad \lambda(0)=1.
\]

\subsection{Proof of Theorem \ref{thm4}: The Super-Critical Case}\label{pfthm4b}
Let us now turn to proving the partial regularity result. We start by recalling that when dynamically rescaling a stationary modulus of continuity, $\Omega(t,\xi)=\lambda\omega(\mu \xi)$, in order to balance out the transport part by the diffusive term over small distances, we roughly needed $\mu\gtrsim g^{\gamma}$, while to take care of the pressure term over large distances (for instance the term $\xi^{\beta-1}g\Omega$) we need
\begin{equation}\label{grwthlmbapf}
\lambda'(t)\approx g\mu^{1-\beta}\lambda.
\end{equation}
So the question is, what are our options for choosing $\mu$? Our aim here is to try and dip below the critical level, so that we would like to choose $\mu$ to meet those ends. As we saw in \S\ref{pfthm4a}, choosing the $\mu$ to be ``right'' power of $g$ would lead to a critical assumption, so we need something else. Since $\gamma\leq p^*$ when $\alpha\in[1/2,1]$, and $\gamma>p^*$ when $\alpha\in(0,1/2)$ (where $p^*$ is the critical exponnent), we would need to consider the two cases separately in order to obtain optimal results.
\subsubsection{The case when $\alpha\in[1/2,1]$}
Our target here is to try and deal with the pressure term by only assuming that at most $g\in L^{\gamma}(0,T)$ (and is non-decreasing), the same supercritical assumptions that lead to regularity in the classical drift-diffusion problem. From \eqref{grwthlmbapf}, if we are to go below $L^1(0,T)$ (say when $\alpha=1$), we need to make use of the factor $\mu^{-\beta}$ to bring down the power of $g$, all the while avoiding a Riccati equation for $\lambda$. Thus, a natural choice is $\mu=\log(\lambda)$. With such a choice for $\mu$, we would need to make sure that $\mu=\log (\lambda)\gtrsim g^{\gamma}$ in order to make sure dissipation prevails over short distances. If this condition is satisfied, then we would have reduced the power of $g$ by a factor of $\mu^{-\beta}\lesssim g^{-\gamma\beta}$, and so we would need $\lambda$ to grow as $g^{1-\gamma\beta}\lambda\log(\lambda)$. Notice that with our definition of $\gamma$, we have $1-\gamma\beta=\gamma(2\alpha-1)$, which in fact is less than $\gamma$ (better than we hoped for), but is still non-negative since $\alpha\in[1/2,1]$.

 To make the above heuristics rigorous, we proceed in a manner similar to what was done previously. Namely we will let $\Omega$ be as in \eqref{defomegatranscrit} (with the same choice of $B$, $r$ and $\delta$), except we now let $\lambda$ be any solution to 
\begin{align}
&\lambda'(t)\geq C_{d,\beta,\alpha}B^{1-\beta}g^{1-\gamma\beta}\lambda\log(\lambda),\label{defodelambdapfthm4b}\\
&\log(\lambda)\geq g^{\gamma}\label{condloglambda},
\end{align}
where $C_{d,\beta,\alpha}$ is some constant, and set $\mu(t):=\log(\lambda(t))$. For instance, since we are assuming $g$ is non-decreasing, one may choose
\[
\log(\lambda(t)):=g^{\gamma}(t)\exp\left(C_{d,\alpha,\beta}B^{1-\beta}\int_0^tg^{1-\gamma\beta}(s)ds\right).
\]
With such choices of $\lambda$ and $\mu$, we invite the reader to readily verify that $\Omega$ will now satisfy \eqref{modevolnse} by repeating the exact same calculations as in \S\ref{pfthm4a}: for $\sigma\in(0,\delta)$, \eqref{destabsmalldist} and the fact that $\lambda$ and $\mu$ are both non-decreasing tell us that the left hand side of \eqref{modevolnse} is bounded from below by 
\[
-C_{\alpha}(B\mu)^{2\alpha}\lambda\left[\sigma^{2-2\alpha}\omega''(\sigma)-C_{d,\alpha,\beta}(B\mu)^{1-2\alpha-\beta}g\sigma^{\beta}\log(\sigma)\right].
\]
From $B\geq1$, $\beta+2\alpha-1>0$ and $\mu=\log(\lambda)\geq g^{\gamma}$, we arrive at requiring 
\[
\sigma^{2-2\alpha}\omega''(\sigma)-C_{d,\alpha,\beta}\sigma^{\beta}\log(\sigma)\leq0,
\]
thus the same choice of $r\in(0,1)$ and $\delta\in(0,1)$ from \S\ref{seccritnsesmalldist} would work. For $\sigma>\delta$, we would use \eqref{destablargedist} and concavity of $\omega$ to arrive at the following lower bound for the left-hand side of \eqref{modevolnse}:
\begin{equation}\label{calc1supercritpfa}
\lambda'(t)\omega(\sigma)-C_{d,\alpha,\beta}B^{1-\beta}\mu^{1-\beta}g\lambda\omega(\sigma)=\left[\lambda'-C_{d,\alpha,\beta}\mu^{-\beta}gB^{1-\beta}\mu\lambda\right]\omega(\sigma).
\end{equation}
From our choice of $\mu=\log(\lambda)\geq g^{\gamma}$ we would get 
\[
\mu^{-\beta}g\leq\frac{g}{\log^{\beta}(\lambda)}\leq \frac{g}{g^{\gamma\beta}}=g^{1-\gamma\beta},
\]
and utilizing \eqref{defodelambdapfthm4b}, we therefore are guaranteed that the right-hand side of \eqref{calc1supercritpfa} is non-negative. This give us the first part of estimate \eqref{supercritbdnsethm}.
\subsubsection{The case when $\alpha\in(0,1/2)$}
Let us recall that we need $\lambda$ to grow as $\lambda'(t)\approx g\mu^{1-\beta}\lambda$, and that $\mu\gtrsim g^{\gamma}$. If we make the same choices as before, we end up with $\lambda'(t)\approx g^{\gamma(2\alpha-1)}\lambda\log\lambda$. When $\alpha\in(0,1/2)$, we need not to worry about integrability of the term $g^{\gamma(2\alpha-1)}$, since it becomes bounded uniformly by 1. On the other hand, the estimate that we will get will be of the form 
\[
\|\nabla u(t,\cdot)\|_{L^{\infty}}\lesssim \lambda(t)\log(\lambda(t)),\quad \log(\lambda)\approx g^{\gamma}(t),
\]
which renders 
\[
\log\left(\|\nabla u(t,\cdot)\|_{L^{\infty}}\right)\lesssim \log(\lambda(t))+\log\log(\lambda(t)),\quad \log(\lambda)\approx g^{\gamma}(t),
\]
and thus to get the partial regularity result would still require the same subcritical assumption when $\alpha\in(0,1/2)$ we had when proving regularity (which is pointless). This is exactly why we need to consider both cases separately. In particular, since $1<p^*$, we could choose $\mu=\log^{\kappa}\lambda$, some $\kappa>1$ in this case, and use the term $\mu^{-\beta}$ to avoid solving a ``logarithmic'' Riccati equation, rather than decrease the exponent of $g$ (which is already at the supercritical level). We would then use this extra power of $\log$ to increase the dissipative power, since our requirement would read $\log(\lambda)\approx g^{\gamma/\kappa}$, and so 
\[
\log\left(\|\nabla u(t,\cdot)\|_{L^{\infty}}\right)\lesssim \log(\lambda(t))+\kappa\log\log(\lambda(t)),\quad \log(\lambda)\approx g^{\gamma/\kappa}(t).
\]
The question is how large can we choose $\kappa$. To avoid solving $\lambda'=g\lambda\log^{1+\varrho}(\lambda)$, some $\varrho>0$, the best we could do is set $\kappa:=1/(1-\beta)$. With such a choice, we get 
\[
\frac{\gamma}{\kappa}=\frac{1-\beta}{\beta+2\alpha-1}<\frac{2\alpha}{\beta+2\alpha-1}=p^*,
\]
since we assumed that $\beta>1-2\alpha$. Thus, we would end up with a supercritical assumption, namely $g\in L^p(0,T)$, where $p:=\max\{1,\kappa/\gamma\}<p^*$, as desired.

With those remarks in mind, we set $\kappa:=1/(1-\beta)$, define $\Omega$ as in \eqref{defomegatranscrit} (with the same choice of $B$, $r$ and $\delta$), except we now let $\lambda$ be any solution to 
\begin{align*}
&\lambda'(t)\geq C_{d,\beta,\alpha}B^{1-\beta}g\lambda\log(\lambda),\\
&\log(\lambda)\geq g^{\gamma/\kappa},
\end{align*}
where $C_{d,\beta,\alpha}$ is the same constant as before and set $\mu(t):=\log^{\kappa}(\lambda(t))$. For instance, we could choose 
\[
\log(\lambda(t)):=g^{\gamma/\kappa}(t)\exp\left(C_{d,\alpha,\beta}B^{1-\beta}\int_0^tg(s)ds\right).
\]
The reader may now repeat the same calculations to show that such an $\Omega$ does indeed satisfy \eqref{modevolnse}, Definition \ref{timdepmoddef} and the hypothesis of Proposition \ref{breakthroughwholespace}. Thus, the second part of estimate \eqref{supercritbdnsethm} holds true.
\section*{Acknowledgments}
The author would like to thank Edriss Titi and Tarek Elgindi for several useful discussions as well as Peter Constantin and Theodore Drivas for helpful remarks on an earlier version of this manuscript. Moreover, the author thanks Titi for support and the research group of Rupert Klein at the Freie Univerit\"{a}t Berlin for their kind hospitality. Finally, my gratitude extends to Peter Kuchment for useful editorial remarks.
\bibliographystyle{abbrv}
\bibliography{mybib}
\end{document}